\newcommand{\A}{{\mathcal A}}
\newcommand{\Ap}[1][]{A_p #1}
\newcommand{\abs}[1]{\left|#1\right|}
\newcommand{\Bp}[1][]{B_p #1}
\newcommand{\bdry}[1]{\partial #1}
\newcommand{\C}[1]{$(\text{C})_{#1}$}
\newcommand{\comp}{\circ}
\newcommand{\dnorm}[2][]{\|#2\|_{#1}^\ast}
\newcommand{\dualp}[3][]{\left(#2,#3\right)_{#1}}
\newcommand{\E}{\mathscr E}
\newcommand{\eps}{\varepsilon}
\newcommand{\F}{{\mathcal F}}
\newcommand{\goodchi}{\protect\raisebox{2pt}{$\chi$}}
\newcommand{\half}{\frac{1}{2}}
\newcommand{\I}{\mathscr I}
\newcommand{\id}[1][]{id_{#1}}
\newcommand{\incl}{\subset}
\newcommand{\isom}{\approx}
\newcommand{\M}{\mathcal M}
\newcommand{\N}{\mathbb N}
\newcommand{\norm}[2][]{\left\|#2\right\|_{#1}}
\renewcommand{\o}{\text{o}}
\newcommand{\PS}[1]{$(\text{PS})_{#1}$}
\newcommand{\pnorm}[2][]{\if #1'' \left|#2\right|_p \else \left|#2\right|_{#1} \fi}
\newcommand{\qnorm}[2][]{\if #1'' \left|#2\right|_q \else \left|#2\right|_{#1} \fi}
\newcommand{\R}{\mathbb R}
\newcommand{\RP}{\R \text{P}}
\newcommand{\restr}[2]{\left.#1\right|_{#2}}
\newcommand{\seq}[1]{\left(#1\right)}
\newcommand{\set}[1]{\left\{#1\right\}}
\newcommand{\vol}[1]{\left|#1\right|}
\newcommand{\wto}{\rightharpoonup}
\newcommand{\Z}{\mathbb Z}
\DeclareMathOperator{\dist}{dist}
\newenvironment{enumroman}{\begin{enumerate}

}{\end{enumerate}}
\newenvironment{properties}[1]{\begin{enumerate}

}{\end{enumerate}}
\newtheorem{corollary}{Corollary}[section]
\newtheorem{lemma}[corollary]{Lemma}
\newtheorem{proposition}[corollary]{Proposition}
\newtheorem{theorem}[corollary]{Theorem}
\theoremstyle{definition}
\newtheorem{definition}[corollary]{Definition}
\theoremstyle{remark}
\newtheorem{example}[corollary]{Example}
\newtheorem{remark}[corollary]{Remark}
\numberwithin{equation}{section}
\title{\bf Nonlocal Dirichlet problems involving the Logarithmic $p$-Laplacian \thanks{{\em MSC2020:} Primary 35J60, Secondary 35A15, 35B33, 58E05.
\newline \indent\; {\em Key Words and Phrases: Logarithmic $p$-Laplacian, Eigenvalue problem, $p$-logarithmic Sobolev inequality, Existence results, Morse theory, Fadell-Rabinowitz cohomological index.}}}
\author{\bf Rakesh Arora\\
Department of Mathematical Sciences\\
Indian Institute of Technology Varanasi (IIT-BHU)\\
Uttar Pradesh 221005, India\\
\em rakesh.mat@iitbhu.ac.in\\
[\medskipamount]
\bf Hichem Hajaiej\\
Department of Mathematics\\
California State University\\
Los Angeles, CA 90032, USA\\
\em hichem.hajaiej@gmail.com\\
[\medskipamount]
\bf Kanishka Perera\\
Department of Mathematics\\
Florida Institute of Technology\\
Melbourne, FL 32901, USA\\
\em kperera@fit.edu}
\date{}
\begin{document}

\maketitle

\begin{abstract}
In this work, we show the existence of an unbounded sequence of minimax eigenvalues for the logarithmic $p$-Laplacian via the $\Z_2$-cohomological index of Fadell and Rabinowitz. As an application of these minimax eigenvalues and $p$-logarithmic Sobolev inequality proved in \cite{AroGiaHajVai-2025}, we prove new existence results for nonlocal Dirichlet problems involving logarithmic $p$-Laplacian and nonlinearities with $p$-superlinear and subcritical growth at infinity.
\end{abstract}

\section{Introduction}

For $p \in (1,\infty)$, $N \geq 1$, and $s \in (0,1)$, the fractional $p$-Laplacian $(- \Delta_p)^s$ is the nonlinear nonlocal operator defined on suitably regular functions $u$ by
\[
(- \Delta_p)^s u(x) = C_{N,s,p} \text{ P.V.} \int_{\R^N} \frac{|u(x) - u(y)|^{p-2}\, (u(x) - u(y))}{|x - y|^{N+sp}}\, dy, \quad x \in \R^N,
\]
where $C_{N,s,p}$ is a normalizing constant chosen such that the limits
\[
\lim_{s \to 0^+}\, (- \Delta_p)^s u(x) = |u(x)|^{p-2}\, u(x), \qquad \lim_{s \to 1^-}\, (- \Delta_p)^s u(x) = - \Delta_p u(x)
\]
hold. This operator can be viewed as a nonlocal analogue of the
classical $p$-Laplace operator. Its analytical theory, covering existence, uniqueness, regularity, and qualitative properties of solutions, has been the focus of extensive research; see, for instance, \cite{Nezza-Palatucci-Valdinoci, Frank-Konig-Tang, Iannizzotto et al-ACV-2016,  Iannizzotto-Staicu-Vespri-2025}.

A particularly interesting direction within nonlocal analysis concerns operators with zero-order kernels, i.e., kernels behaving like $|x-y|^{-N}$ near singularity and having infinite range of interactions. These operators arise naturally both in mathematical models exhibiting borderline singular behavior and in a variety of applied contexts. Their study has produced a rich theoretical framework; see \cite{Chen-Weth-CPDE-2019, Feulefack-Jarohs, Frank-Konig-Tang, Foghem-Nodea-2025, Jarohs-Saldana-Weth-1, Temgoua-Weth} for
mathematical developments and \cite{Pellacci-Verzini, Sikic-Song-Vondracek, Sprekels-Valdinoci} for applications.

A seminal work in this direction was the introduction of the logarithmic Laplacian $L_{\Delta}$, a singular integro-differential
operator derived by Chen and Weth \cite{Chen-Weth-CPDE-2019}. For a smooth function $u$ and $x \in \mathbb{R}^N$, it is defined as
\[
\begin{split}
    L_\Delta u(x) & =\frac{d}{ds}\bigg|_{s=0} (-\Delta)^s u = c_N \int_{B_1(x)} \frac{u(x)-u(y)}{\abs{x-y}^N} ~dy -c_N\int_{\R^N\setminus\mathcal{B}_1(x)} \frac{u(y)}{\abs{x-y}^N} ~dy + \rho_Nu(x),
    \end{split}
\]
where $B_1(x) \subset \mathbb{R}^N$ denotes the unit ball centered at $x$ and $c_N, \rho_N$ are normalizing constants. The foundational analysis in \cite{Chen-Weth-CPDE-2019} established a variational setting for Dirichlet problems involving the logarithmic Laplacian $L_{\Delta}$. Exploiting the underlying Hilbert space structure. The authors obtained a spectral characterization of the eigenvalues of $L_\Delta$ and showed that the spectrum $\sigma(L_\Delta)$ of $L_\Delta$ consists of eigenvalues $(\lambda_k)$ satisfying
$$\lambda_1 \leq \lambda_2 \leq \lambda_3 \leq \cdots \leq \lambda_k \leq \cdots \quad \text{and} \quad \lambda_k \to \infty \ \text{as} \ k \to \infty.$$

Subsequent developments in this direction include the study of logarithmic Sobolev inequalities, as well as optimal continuous and compact embeddings of zero-order Sobolev spaces, carried out by Salda\~na et al. \cite{Angeles-Saldana, Santamaria-Saldana} and by Arora et al. \cite{Arora-Giacomoni-Vaishnavi}, respectively. In \cite{Angeles-Saldana, Arora-Giacomoni-Vaishnavi, Santamaria-Saldana}, the limit behavior of equations
involving the fractional Laplacian $(-\Delta)^{s}$ as $s \to 0$ has also been studied, with connections to models in population dynamics, optimal control, and image processing.  Additional advancements include: regularity results \cite{Santamaria-Rios-Saldana, Lara-Saldana}, spectral asymptotics \cite{Chen-Veron, Feulefack-Jarohs-Weth}, and geometric interpretations related to $0$-fractional perimeters \cite{Luca et. al}.

Recently, Dyda et al. \cite{Dyda-Jarohs-Sk} introduced the nonlinear extension $L_{\Delta_p}$ of $L_{\Delta}$, which arises naturally as the first-order derivative of the fractional $p$-Laplacian. For $u \in C_c^\alpha(\mathbb{R}^N)$ with $\alpha>0$ and $x \in \mathbb{R}^N$, the logarithmic $p$-Laplacian $L_{\Delta_p}$ is defined by
\[
\begin{aligned}
L_{\Delta_p} u(x)
&= C_{N,p}\int_{B_1(x)}
    \frac{|u(x)-u(y)|^{p-2}(u(x)-u(y))}{|x-y|^{N}}\, dy \\
&\quad + C_{N,p}\int_{\mathbb{R}^N\setminus B_1(x)}
    \frac{|u(x)-u(y)|^{p-2}(u(x)-u(y))
          - |u(x)|^{p-2}u(x)}{|x-y|^N}\, dy \\
&\quad + \rho_N\, |u(x)|^{p-2}u(x), \quad x \in \mathbb{R}^N
\end{aligned}
\]
where $C_{N,p}$ and $\rho_N$ are normalizing constants (see \cite[Theorem 1.1]{Dyda-Jarohs-Sk}). The appropriate function space to look for solutions to the Dirichlet problem involving $L_{\Delta_p}$ was recently introduced in Foghem \cite{Foghem-Nodea-2025}. Set
\[
[u]_p^p = C_{N,\,p} \iint_{\substack{\R^N \times \R^N\\[1pt] |x - y| < 1}} \frac{|u(x) - u(y)|^p}{|x - y|^N}\, dxdy
\]
and let
\[
X^p_0(\Omega) = \set{u \in L^p(\R^N) : u = 0 \text{ in } \R^N \setminus \Omega \text{ and } [u]_p < \infty}.
\]
Then $X^p_0(\Omega)$ endowed with the norm
\[
\norm[X^p_0(\Omega)]{u} = \left(\pnorm{u}^p + [u]_p^p\right)^{1/p}
\]
is a uniformly convex Banach space, where $\pnorm{\cdot}$ denotes the norm in $L^p(\R^N)$, and
\[
\norm{u} = [u]_p
\]
defines an equivalent norm on $X^p_0(\Omega)$. Moreover, $X^p_0(\Omega)$ is compactly embedded in $L^p(\Omega)$. By exploiting the compact embeddings, the authors in \cite{Dyda-Jarohs-Sk} proved that the first eigenvalue $\lambda_1$ of $L_{\Delta_p}$ is simple, and that its corresponding eigenfunction is bounded and strictly positive in $\Omega$. However, the structure and characterization of the higher eigenvalues remain open.

The main difficulties stem from the nonlinear nature of the operator $L_{\Delta_p}$, which is analogous to those encountered in the study of classical nonlinear operators, and the sign-changing nature of the associated bilinear form. For the $p$-Laplacian and the fractional $p$-Laplacian, Perera et al. \cite{Iannizzotto et al-ACV-2016, Perera-TMNA-2003} introduced a new variational construction of eigenvalues based on the $\mathbb{Z}_2$-cohomological index of Fadell and Rabinowitz. This approach differs from the traditional framework, which relies on the Krasnosel’skii genus as the latter lacks certain Morse-theoretic features. The refined structure of this new sequence of eigenvalues has been instrumental in extending many standard variational techniques for semilinear problems to the quasilinear setting (see \cite{Duzgun-Iannizzotto, Frassu-Iannizzotto, Perera-TMNA-2003,  Perera-Szulkin-2005, MR2640827}).

Motivated by these developments, and employing a minimax scheme built upon the $\Z_2$-cohomological index of Fadell and Rabinowitz, we establish an unbounded sequence of minimax eigenvalues $(\lambda_k)$ for our operator $L_{\Delta_p}$, thereby addressing the aforementioned gap in the first part of the present work.

Very recently, Arora et al. \cite{AroGiaHajVai-2025} proved a new $p$-logarithmic Sobolev inequality and derived optimal continuous embeddings of $X^p_0(\Omega)$ in Orlicz-type spaces $L^\varphi(\Omega)$ given by the modular function $\varphi(t) \approx t^p \ln(t)$ for large $t \gg 1$ and compact embeddings in Orlicz-type spaces $L^\psi(\Omega)$ given by the modular function $\psi(t) $ such that $\lim_{t \to \infty}\frac{\psi(t)}{\varphi(t)} =0$. As an application of these embedding results, they first proved existence results for a class of Dirichlet boundary value problems involving the logarithmic $p$-Laplacian and critical growth nonlinearities.

Next, as an application of these characterizations and $p$-logarithmic Sobolev inequality proved in Arora et al. \cite{AroGiaHajVai-2025}, we study a Dirichlet boundary value problem involving the logarithmic $p$-Laplacian and a class of nonlinearities exhibiting $p$-superlinear and subcritical growth at infinity, addressing a problem that has remained open in the literature even for $p=2$.

To the best of our knowledge, in the nonlinear case $p \neq 2$, \cite{AroGiaHajVai-2025} and \cite{Dyda-Jarohs-Sk} are the only studies to date devoted to the
logarithmic $p$-Laplacian. Research on the logarithmic $p$-Laplacian is still in its early stages of development. Many questions concerning its analytical properties remain largely open, and the present work aims to advance this emerging theory.

\section{Main problem}
Motivated by the above discussion, in this work, we aim to study the following class of Dirichlet boundary value problems
\begin{equation} \label{main:problem}
\left\{\begin{aligned}
L_{\Delta_p} u & = g(u)\, && \text{in } \Omega\\[7.5pt]
u & = 0 && \text{in } \R^N \setminus \Omega,
\end{aligned}\right.
\end{equation}
where $\Omega$ is a bounded domain in $\mathbb{R}^N$ and $N \geq 1$. The continuous function $g: \mathbb{R} \to \mathbb{R}$ divide our study into two categories depending upon its growth:
\begin{itemize}
    \item when $g(u) = \lambda |u|^{p-2} u$. Then, the problem \eqref{main:problem} can be regarded as an eigenvalue problem for the logarithmic $p$-Laplacian.

    In this case, we show the existence of an unbounded sequence of minimax eigenvalues $(\lambda_k)$ via the $\Z_2$-cohomological index of Fadell and Rabinowitz and also demonstrate the existence of a non-trivial critical group. For a detailed statement of results, we refer to Theorem \ref{Theorem 1} and Theorem \ref{Theorem 2}.
    \item when $g$ satisfies
    \begin{properties}{g}
    \item \label{g1} $\lim\limits_{t \to 0}\, \dfrac{g(t)}{|t|^{p-2}\, t} = \lambda \in \R$,
    \item \label{g2} $\lim\limits_{|t| \to \infty}\, \dfrac{g(t)}{|t|^{p-2}\, t \ln |t|} = 0$.
    \item \label{g3} there exist constants $\beta \in (0,1)$, and $t_0 > 1$ such that
    \[
    0 < \frac{\beta|t|^p}{\ln |t|} \le tg(t) - p G(t) \quad \text{for } |t| \ge t_0 \quad \text{where} \ G(t) = \int_0^t g(s) ~ds.
    \]
\end{properties}
The assumptions \ref{g1} and \ref{g2} on the function $g$ are motivated by the $p$-logarithmic Sobolev inequality and optimal embeddings results proved in Arora et al. \cite{AroGiaHajVai-2025} signifying the $p$-superlinear and subcritical growth at infinity. The assumption \ref{g3} introduced above can be interpreted as the logarithmic counterpart to the classical Ambrosetti-Rabinowitz condition famously used in the literature to treat the quasilinear Dirichlet boundary value problems involving classical $p$-Laplacian and fractional $p$-Laplacian (see \cite{Ambrosetti-Rabinowitz, Duzgun-Iannizzotto, Frassu-Iannizzotto}). For example, the continuous functions $h_1$, $h_{2}$ and $h_{3}$ given by
\[
h_1(t) = \lambda |t|^{p-2} t (\ln(e+|t|))^\theta, \quad h_{2}(t) = \lambda |t|^{p-2} t + |t|^{p-2} t (\ln(1+|t|))^\theta
\]
and
\[
h_{3}(t) =
\begin{cases}
    \lambda |t|^{p-2} t \ & \text{if} \quad |t| \leq t_1,\\
    |t|^{p-2}\, t \abs{\ln |t|}^\theta \ & \text{if} \quad |t| \geq t_0,
\end{cases}
\]
satisfies \ref{g1}-\ref{g3}, where $\lambda \in \mathbb{R}$, $0 < t_1 < t_0$ and $\theta \in (0,1).$ These examples show that problem \eqref{main:problem} naturally includes eigenvalue problems for the operator $L_{\Delta_p}$ with logarithmic superlinear nonlinearities, as well as perturbed eigenvalue problems featuring subcritical logarithmic growth, in light of the embedding results established in Arora et al. \cite{AroGiaHajVai-2025}.

By employing a variant of the linking theorem due to Yang et al. \cite{MR3616328} together with the consequences of $p$-logarithmic Sobolev inequality established by Arora et al. \cite{AroGiaHajVai-2025}, we prove the existence of a nontrivial solution of the problem \eqref{main:problem} in both the cases $\lambda < \lambda_1$ and $\lambda \in (\lambda_k, \lambda_{k+1})$, $k \in \mathbb{N}$, where the parameter $\lambda$ is given by the assumption \ref{g1}. For a detailed statement of results, we refer to Theorem \ref{thm:existence:lambda1} and Theorem \ref{thm:existence:lambdak}.
\end{itemize}

\textbf{Outline of the paper}: The rest of the paper is organized as follows. In Section \ref{sec:variational setting}, we show that the potential operator $A_p$ (see \eqref{poten-opera}) associated with the operator $L_{\Delta_p}$ is of type $(S)$. This property plays a crucial role in establishing the compactness condition for the energy functional corresponding to the problem \eqref{main:problem}. In Section \ref{sec:eigen}, we construct an unbounded sequence of minimax eigenvalues $(\lambda_k)$ and prove the existence of a nontrivial critical group (see Theorems \ref{Theorem 1} and \ref{Theorem 2}). In Section \ref{sec:new-log-ineq}, we derive new logarithmic estimates that are essential for verifying the Cerami compactness condition and for obtaining a precise understanding of the behaviour of the energy functional in a neighbourhood of the origin. Section \ref{sec:nonlinear-problem} is devoted to the study of problem \eqref{main:problem}, where we establish the existence of a nontrivial solution depending on the parameter $\lambda$. The case $\lambda < \lambda_1$ is addressed in Theorem \ref{thm:existence:lambda1}, while the case $\lambda \in (\lambda_k, \lambda_{k+1})$, $k\in\mathbb{N}$, is treated in Theorem \ref{thm:existence:lambdak}.

\section{Operator setting}\label{sec:variational setting}
For $u, v \in X^p_0(\Omega)$, let
\begin{multline*}
\E_{L,\,p}(u,v) = \frac{C_{N,\,p}}{2} \int_{\R^N} \int_{B_1(x)} \frac{|u(x) - u(y)|^{p-2}\, (u(x) - u(y))(v(x) - v(y))}{|x - y|^N}\, dydx\\[7.5pt]
+ \frac{C_{N,\,p}}{2} \int_{\R^N} \int_{\R^N \setminus B_1(x)} \frac{1}{|x - y|^N}\, \big[|u(x) - u(y)|^{p-2}\, (u(x) - u(y))(v(x) - v(y))\\[7.5pt]
- |u(x)|^{p-2}\, u(x)v(x) - |u(y)|^{p-2}\, u(y)v(y)\big]\, dydx + \rho_N(p) \int_{\R^N} |u(x)|^{p-2}\, u(x) v(x)\, dx.
\end{multline*}
We consider the nonlinear operator equation
\[
\Ap[u] = l_f
\]
in the dual $X^p_0(\Omega)^\ast$ of $X^p_0(\Omega)$, where $\Ap \in C(X^p_0(\Omega),X^p_0(\Omega)^\ast)$ is the operator given by
\begin{equation}\label{poten-opera}
    \dualp{\Ap[u]}{v} = \E_{L,\,p}(u,v), \quad u, v \in X^p_0(\Omega)
\end{equation}
and $l_f \in X^p_0(\Omega)^\ast$ is given by
\[
l_f(v) = \int_\Omega fv\, dx, \quad v \in X^p_0(\Omega), \quad \text{where} \ f \in L^{p/(p-1)}(\Omega).
\]
The operator $\Ap$ is a potential operator with the potential
\[
I_p(u) = \frac{1}{p} \dualp{\Ap[u]}{u} = \frac{1}{p}\, \E_{L,\,p}(u,u), \quad u \in X^p_0(\Omega),
\]
i.e., $I_p'(u) = \Ap[u]$ for all $u \in X^p_0(\Omega)$. We write $\Ap = \Ap' + \Ap''$, where
\[
\dualp{\Ap'u}{v} = \frac{C_{N,\,p}}{2} \iint_{\substack{\R^N \times \R^N\\[1pt] |x - y| < 1}} \frac{|u(x) - u(y)|^{p-2}\, (u(x) - u(y))(v(x) - v(y))}{|x - y|^N}\, dydx
\]
and
\begin{multline*}
\dualp{\Ap''u}{v} = \frac{C_{N,\,p}}{2} \iint_{\substack{\R^N \times \R^N\\[1pt] |x - y| \ge 1}} \frac{1}{|x - y|^N}\, \big[|u(x) - u(y)|^{p-2}\, (u(x) - u(y))(v(x) - v(y))\\[7.5pt]
\hspace{-5.5pt} - |u(x)|^{p-2}\, u(x)v(x) - |u(y)|^{p-2}\, u(y)v(y)\big]\, dydx + \rho_N(p) \int_{\R^N} |u(x)|^{p-2}\, u(x) v(x)\, dx
\end{multline*}
for $u, v \in X^p_0(\Omega)$. Note that, we have
\begin{equation} \label{15}
\dualp{\Ap'u}{u} = \half \norm{u}^p
\end{equation}
and
\begin{equation} \label{16}
\dualp{\Ap'u}{v} \le \half \norm{u}^{p-1} \norm{v} \quad \forall u, v \in X^p_0(\Omega)
\end{equation}
by the H\"{o}lder inequality. Next, we recall the following estimate obtained in Arora et al.\! \cite{AroGiaHajVai-2025} and prove some preliminaries results concerning the operator $A_p$.
\begin{lemma}[{\cite[Lemma 4.1]{AroGiaHajVai-2025}}] \label{Lemma 2}
There exists a constant $C > 0$ such that
\[
\big|\!\dualp{\Ap''u}{v}\!\big| \le C \pnorm{u}^{p-1} \pnorm{v} \quad \forall u, v \in X^p_0(\Omega).
\]
\end{lemma}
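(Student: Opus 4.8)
The plan is to exploit the compact support of $u$ and $v$ to collapse the doubly-improper integral defining $\dualp{\Ap''u}{v}$ onto the \emph{bounded} region $\set{(x,y):x,y\in\Omega,\ |x-y|\ge1}$, on which the kernel $|x-y|^{-N}$ is bounded by $1$, and then to dominate the integrand by a sum of products $|u(\xi)|^{p-1}|v(\eta)|$ and apply Hölder's inequality. The first step is the support reduction: since $u,v\in X^p_0(\Omega)$ vanish on $\R^N\setminus\Omega$, the bracketed integrand
\[
H(x,y):=|u(x)-u(y)|^{p-2}(u(x)-u(y))(v(x)-v(y))-|u(x)|^{p-2}u(x)v(x)-|u(y)|^{p-2}u(y)v(y)
\]
vanishes identically whenever $x\notin\Omega$ or $y\notin\Omega$: if, say, $y\notin\Omega$ then $u(y)=v(y)=0$ and the three summands reduce to $|u(x)|^{p-2}u(x)v(x)-|u(x)|^{p-2}u(x)v(x)-0=0$, and the case $x\notin\Omega$ is symmetric. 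Since also $\int_{\R^N}|u|^{p-2}uv\,dx=\int_\Omega|u|^{p-2}uv\,dx$, this identifies
\[
\dualp{\Ap''u}{v}=\frac{C_{N,p}}{2}\iint_{\substack{x,y\in\Omega\\ |x-y|\ge1}}\frac{H(x,y)}{|x-y|^N}\,dy\,dx+\rho_N(p)\int_\Omega|u|^{p-2}uv\,dx,
\]
and at the same time shows the integral is absolutely convergent, since the integrand is now compactly supported and dominated by monomials that are integrable over $\Omega\times\Omega$.

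Next I would estimate pointwise. On $\set{|x-y|\ge1}$ we have $|x-y|^{-N}\le1$, and, using $|u(x)-u(y)|^{p-1}\le(|u(x)|+|u(y)|)^{p-1}\le c_p(|u(x)|^{p-1}+|u(y)|^{p-1})$ together with $|v(x)-v(y)|\le|v(x)|+|v(y)|$,
\[
|H(x,y)|\le C_p\bigl(|u(x)|^{p-1}+|u(y)|^{p-1}\bigr)\bigl(|v(x)|+|v(y)|\bigr),
\]
a finite linear combination of the four monomials $|u(\xi)|^{p-1}|v(\eta)|$, $\xi,\eta\in\{x,y\}$. Integrating a diagonal monomial over the reduced region, I would first integrate out $y$ over $\set{y\in\Omega:|x-y|\ge1}$, a set of measure at most $|\Omega|$, and then apply Hölder with exponents $p/(p-1)$ and $p$, obtaining
\[
\iint_{\substack{x,y\in\Omega\\ |x-y|\ge1}}\frac{|u(x)|^{p-1}|v(x)|}{|x-y|^N}\,dy\,dx\le|\Omega|\int_\Omega|u|^{p-1}|v|\,dx\le|\Omega|\,\pnorm{u}^{p-1}\pnorm{v};
\]
for a cross monomial the double integral factorizes and Hölder against the constant function $1$ on the bounded set $\Omega$ gives
\[
\iint_{\substack{x,y\in\Omega\\ |x-y|\ge1}}\frac{|u(x)|^{p-1}|v(y)|}{|x-y|^N}\,dy\,dx\le\Bigl(\int_\Omega|u|^{p-1}\Bigr)\Bigl(\int_\Omega|v|\Bigr)\le|\Omega|\,\pnorm{u}^{p-1}\pnorm{v}.
\]
The last term is bounded directly by $|\rho_N(p)|\,\pnorm{u}^{p-1}\pnorm{v}$, and summing the finitely many contributions yields the claim with $C=C(N,p,|\Omega|)$.

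I do not expect a genuine obstacle here: the estimate is soft once the support reduction is in place. The one point that must be handled with care is exactly that reduction — one may \emph{not} split $H$ into its three summands over the original unbounded set $\set{|x-y|\ge1}$, since each piece is separately non-integrable in $y$ (this cancellation is precisely the mechanism that makes $L_{\Delta_p}$ well defined), so the compact-support argument has to be carried out on the bracket as a whole. The resulting constant unavoidably depends on $|\Omega|$, which is consistent with the role of $\Ap''$ as a lower-order term controlled by the $L^p$-norm rather than by the Gagliardo-type seminorm $\norm{\cdot}$.
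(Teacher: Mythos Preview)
The paper does not prove this lemma at all: it simply quotes it from \cite[Lemma~4.1]{AroGiaHajVai-2025} without argument, so there is nothing in the present paper to compare your proof against. Your argument is correct and is the natural one. The crucial point --- and you handle it properly --- is the support reduction: the bracket $H(x,y)$ vanishes identically whenever $x\notin\Omega$ or $y\notin\Omega$, which collapses the unbounded domain of integration to $\Omega\times\Omega\cap\{|x-y|\ge1\}$, where the kernel is bounded by $1$. After that the pointwise bound $|H(x,y)|\le C_p(|u(x)|^{p-1}+|u(y)|^{p-1})(|v(x)|+|v(y)|)$ and H\"older over the bounded domain finish the job. Your warning that one cannot split $H$ into its three summands \emph{before} the reduction is exactly right: each separate summand is non-integrable in the far variable, and it is the built-in cancellation that makes $\Ap''$ well defined.
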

\begin{lemma} \label{Lemma 1}
There exists a constant $C > 0$ such that
\[
\abs{\E_{L,\,p}(u,u) - \half \norm{u}^p} \le C \pnorm{u}^p \quad \forall u \in X^p_0(\Omega).
\]
\end{lemma}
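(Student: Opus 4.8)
The plan is to exploit the additive splitting $\Ap = \Ap' + \Ap''$ already introduced above, which yields
\[
\E_{L,\,p}(u,u) = \dualp{\Ap[u]}{u} = \dualp{\Ap'u}{u} + \dualp{\Ap''u}{u}, \quad u \in X^p_0(\Omega).
\]
The point is that the first term is computed exactly by \eqref{15}, namely $\dualp{\Ap'u}{u} = \half \norm{u}^p$, so the whole quantity to be estimated collapses to the single term $\dualp{\Ap''u}{u}$:
\[
\E_{L,\,p}(u,u) - \half \norm{u}^p = \dualp{\Ap''u}{u}.
\]

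With this identity in hand, the remaining step is simply to invoke Lemma \ref{Lemma 2} with $v = u$, which gives
\[
\big|\dualp{\Ap''u}{u}\big| \le C \pnorm{u}^{p-1} \pnorm{u} = C \pnorm{u}^p
\]
for all $u \in X^p_0(\Omega)$, and the claimed bound follows with the same constant $C$.

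There is really no obstacle here: the substantive estimate is exactly the content of Lemma \ref{Lemma 2} (taken from \cite[Lemma 4.1]{AroGiaHajVai-2025}), which controls the long-range part $\Ap''$ by the $L^p$-norm alone, while the short-range part $\Ap'$ reproduces $\half\norm{u}^p$ by construction via \eqref{15}. The only care needed is to make sure the diagonal substitution $v = u$ is legitimate in \eqref{poten-opera} and in Lemma \ref{Lemma 2} — which it is, since both are stated for all $u, v \in X^p_0(\Omega)$ — and to note that the polarization-free definition of $\E_{L,\,p}(u,u)$ matches $\dualp{\Ap[u]}{u}$. Hence the lemma is an immediate corollary of the operator decomposition together with Lemma \ref{Lemma 2}.
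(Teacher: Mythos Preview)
Your proof is correct and follows exactly the same approach as the paper: decompose $\E_{L,\,p}(u,u) = \dualp{\Ap'u}{u} + \dualp{\Ap''u}{u}$, use \eqref{15} to identify the first term with $\half\norm{u}^p$, and then apply Lemma~\ref{Lemma 2} with $v=u$ to bound the second term by $C\pnorm{u}^p$.
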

\begin{proof}
  Since
\[
\E_{L,\,p}(u,u) = \dualp{\Ap[u]}{u} = \dualp{\Ap'u}{u} + \dualp{\Ap''u}{u}\!,
\]
Lemma \ref{Lemma 2} together with \eqref{15} gives the required estimate for $\E_{L,\,p}$.
\end{proof}
\begin{lemma} \label{Lemma 3}
The operator $\Ap$ is of type (S), i.e., every sequence $\seq{u_j} \subset X^p_0(\Omega)$ such that $u_j \wto u$ and $\dualp{\Ap[u_j]}{u_j - u} \to 0$ converges strongly to $u$.
\end{lemma}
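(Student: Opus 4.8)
The plan is to split $\Ap = \Ap' + \Ap''$ and exploit that the ``local'' piece $\Ap'$ carries the full convexity and coercivity of the norm $\norm{\cdot} = [\,\cdot\,]_p$, while the ``nonlocal tail'' $\Ap''$ is a lower-order term governed only by the $L^p$-norm, via Lemma~\ref{Lemma 2}. So, given $\seq{u_j} \subset X^p_0(\Omega)$ with $u_j \wto u$ and $\dualp{\Ap[u_j]}{u_j - u} \to 0$, I would first record the standard consequences of weak convergence: $\seq{u_j}$ is bounded, and by the compact embedding $X^p_0(\Omega) \hookrightarrow L^p(\Omega)$ one has $u_j \to u$ in $L^p(\Omega)$, hence $\pnorm{u_j - u} \to 0$ and $\pnorm{u_j} \to \pnorm{u}$ (recall that all functions vanish outside $\Omega$). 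Then Lemma~\ref{Lemma 2} together with the $L^p$-boundedness of $\seq{u_j}$ gives
\[
\abs{\dualp{\Ap''u_j}{u_j - u}} \le C\, \pnorm{u_j}^{p-1}\, \pnorm{u_j - u} \to 0,
\]
so that $\dualp{\Ap'u_j}{u_j - u} \to 0$ as well.

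The second step is to upgrade this into convergence of norms, $\norm{u_j} \to \norm{u}$. By \eqref{15} and the H\"older bound \eqref{16},
\[
\dualp{\Ap'u_j}{u_j - u} = \half \norm{u_j}^p - \dualp{\Ap'u_j}{u} \ge \half \norm{u_j}^p - \half \norm{u_j}^{p-1} \norm{u} = \half \norm{u_j}^{p-1} \big(\norm{u_j} - \norm{u}\big),
\]
whence $\limsup_j \norm{u_j}^{p-1}\big(\norm{u_j} - \norm{u}\big) \le 0$. Since the norm is weakly lower semicontinuous, $\liminf_j \norm{u_j} \ge \norm{u}$, and since $\seq{u_j}$ is bounded and $t \mapsto t^{p-1}$ is strictly increasing on $[0,\infty)$, an elementary subsequence argument forces $\norm{u_j} \to \norm{u}$. (Equivalently, one may also subtract $\dualp{\Ap'u}{u_j - u} \to 0$ and use the monotonicity of $\Ap'$ to reach $\big(\norm{u_j}^{p-1} - \norm{u}^{p-1}\big)\big(\norm{u_j} - \norm{u}\big) \to 0$, then conclude in the same way.)

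Finally, since $\norm{u_j} = [u_j]_p \to [u]_p = \norm{u}$ and $\pnorm{u_j} \to \pnorm{u}$, it follows that $\norm[X^p_0(\Omega)]{u_j}^p = \pnorm{u_j}^p + [u_j]_p^p \to \pnorm{u}^p + [u]_p^p = \norm[X^p_0(\Omega)]{u}^p$. As $\big(X^p_0(\Omega), \norm[X^p_0(\Omega)]{\cdot}\big)$ is uniformly convex, the Radon--Riesz property applies: $u_j \wto u$ together with $\norm[X^p_0(\Omega)]{u_j} \to \norm[X^p_0(\Omega)]{u}$ gives $u_j \to u$ strongly in $X^p_0(\Omega)$, which is the claim.

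The one place where the specific structure of $L_{\Delta_p}$ really enters, and the step I expect to be the genuine obstacle, is the control of the nonlocal tail in the first step: the whole argument hinges on $\Ap''$ being dominated by the $L^p$-norm, along which (by the compact embedding) $u_j - u \to 0$, rather than by $\norm{\cdot}$ --- this is exactly the content of Lemma~\ref{Lemma 2}. If one prefers a self-contained argument avoiding the Radon--Riesz step, Steps~2 and~3 may be replaced by the pointwise inequality $\big(|a|^{p-2}a - |b|^{p-2}b\big)(a-b) \ge c_p\, |a-b|^p$ for $p \ge 2$ (and its $1 < p < 2$ analogue combined with a H\"older interpolation in the measure $|x-y|^{-N}\, dx\, dy$ on $\set{|x-y| < 1}$), applied with $a = u_j(x) - u_j(y)$ and $b = u(x) - u(y)$, which converts $\dualp{\Ap'u_j - \Ap'u}{u_j - u} \to 0$ directly into $[u_j - u]_p \to 0$.
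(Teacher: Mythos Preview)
Your proof is correct and follows the same overall strategy as the paper: split $\Ap = \Ap' + \Ap''$, kill the tail $\Ap''$ via Lemma~\ref{Lemma 2} and the compact embedding $X^p_0(\Omega)\hookrightarrow L^p(\Omega)$, and then conclude from the remaining condition $\dualp{\Ap'u_j}{u_j-u}\to 0$. The only difference is in this last step: the paper simply observes that \eqref{15}, \eqref{16}, and the uniform convexity of $X^p_0(\Omega)$ place $\Ap'$ in the abstract framework of \cite[Proposition~1.3]{MR2640827}, which directly gives that $\Ap'$ is of type~(S); you instead unpack that proposition by hand, deriving $\norm{u_j}\to\norm{u}$ from \eqref{15}--\eqref{16} and then invoking the Radon--Riesz property. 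Your version is self-contained, the paper's is shorter by citation; the content is the same.
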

\begin{proof}
Since $X^p_0(\Omega)$ is uniformly convex, it follows from \eqref{15} and \eqref{16} that the operator $\Ap'$ is of type (S) (see Perera et al.\! \cite[Proposition 1.3]{MR2640827}). Moreover, since $u_j \wto u$ and $X^p_0(\Omega)$ is compactly embedded in $L^p(\Omega)$, $u_j \to u$ in $L^p(\Omega)$. Then $\dualp{\Ap''u_j}{u_j - u} \to 0$ by Lemma \ref{Lemma 2} and hence
\[
\dualp{\Ap'u_j}{u_j - u} = \dualp{\Ap[u_j]}{u_j - u} - \dualp{\Ap''u_j}{u_j - u} \to 0.
\]
The desired conclusion follows since $\Ap'$ is of type (S).
\end{proof}

\section{Eigenvalue problem}\label{sec:eigen}
Let $\Omega$ be a bounded domain in $\mathbb{R}^N$. Consider the Dirichlet eigenvalue problem
\begin{equation} \label{2}\tag{$L_{p}$}
\left\{\begin{aligned}
L_{\Delta_p} u & = \lambda\, |u|^{p-2}\, u && \text{in } \Omega\\[7.5pt]
u & = 0 && \text{in } \R^N \setminus \Omega.
\end{aligned}\right.
\end{equation}
The above eigenvalue problem can be written as the operator equation
\[
\Ap[u] = \lambda \Bp[u]
\]
in $X^p_0(\Omega)^\ast$, the dual of $X^p_0(\Omega)$, where $\Bp \in C(X^p_0(\Omega),X^p_0(\Omega)^\ast)$ is given by
\[
\dualp{\Bp[u]}{v} = \int_\Omega |u|^{p-2}\, uv\, dx, \quad u, v \in X^p_0(\Omega).
\]
The operator $\Bp$ is a potential operator with the potential
\[
J_p(u) = \frac{1}{p} \dualp{\Bp[u]}{u} = \frac{1}{p} \int_\Omega |u|^p \, dx, \quad u \in X^p_0(\Omega),
\]
i.e., $J_p' = \Bp$. Note that $\Bp$ is a compact operator since $X^p_0(\Omega)$ is compactly embedded in $L^p(\Omega)$. Define, the restricted potential operator $\I_{p}: \M_p \to \mathbb{R}$ as
\begin{equation} \label{26}
\I_{p} = \restr{I_p}{\M_p}  \quad \text{where} \quad \M_p = \set{u \in X^p_0(\Omega) : J_p(u) = 1} = \{u \in X^p_0(\Omega) : \pnorm{u}^p = p\}.
\end{equation}
Then the norm of $\I_{p}'(u)$ as an element of the cotangent space to $\M_p$ at $u$ is given by
\[
\dnorm[u]{\I_{p}'(u)} = \min_{\mu \in \R}\, \dnorm{I_p'(u) - \mu J_p'(u)} = \min_{\mu \in \R}\, \dnorm{\Ap[u] - \mu \Bp[u]},
\]
where $\dnorm{\cdot}$ denotes the norm in $X^p_0(\Omega)^\ast$ (see, e.g., Perera et al.\! \cite[Proposition 3.54]{MR2640827}). Therefore, $\I_{p}'(u) = 0$ if and only if $\Ap[u] = \mu \Bp[u]$ for some Lagrange multiplier $\mu \in \R$, in which case
\[
\mu = \frac{\dualp{\Ap[u]}{u}}{\dualp{\Bp[u]}{u}} = \frac{I_p(u)}{J_p(u)} = \I_{p}(u).
\]
Hence, we have the following variational formulation of the eigenvalue problem \eqref{2}.

\begin{lemma} \label{Lemma 4}
The eigenvalues of \eqref{2} coincide with the critical values of $\I_{p}$, i.e., $\lambda$ is an eigenvalue if and only if there is a $u \in \M$ such that $\I_{p}'(u) = 0$ and $\I_{p}(u) = \lambda$.
\end{lemma}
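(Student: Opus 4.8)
The plan is to combine the Lagrange--multiplier description of $\I_{p}'$ recorded immediately above the statement with the fact that $\Ap$ and $\Bp$ are positively homogeneous of degree $p-1$. Recall that, as already noted in the text, $\lambda$ is an eigenvalue of \eqref{2} exactly when the operator equation $\Ap[u] = \lambda\Bp[u]$ in $X^p_0(\Omega)^\ast$ has a nontrivial solution $u \in X^p_0(\Omega)$. Reading off the defining formulas, each integrand occurring in $\E_{L,\,p}(u,v)$ and in $\dualp{\Bp[u]}{v}$ is homogeneous of degree $p-1$ in the argument $u$, so $\Ap[tu] = t^{p-1}\Ap[u]$ and $\Bp[tu] = t^{p-1}\Bp[u]$ for every $t > 0$ and $u \in X^p_0(\Omega)$.

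I would first reduce the operator equation to the constraint manifold $\M_p$. If $u \neq 0$ solves $\Ap[u] = \lambda\Bp[u]$, then $\pnorm{u} > 0$ (since $u$ vanishes outside $\Omega$ and $u \not\equiv 0$), so for $t = (p/\pnorm{u}^p)^{1/p} > 0$ the function $tu$ lies in $\M_p$ and, by homogeneity, $\Ap[tu] = t^{p-1}\Ap[u] = \lambda t^{p-1}\Bp[u] = \lambda\Bp[tu]$; conversely any solution lying in $\M_p$ is automatically nontrivial because $0 \notin \M_p$. Hence $\lambda$ is an eigenvalue of \eqref{2} if and only if there exists $u \in \M_p$ with $\Ap[u] = \lambda\Bp[u]$. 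Now I invoke the discussion preceding the statement: for $u \in \M_p$ one has $\I_{p}'(u) = 0$ if and only if $\Ap[u] = \mu\Bp[u]$ for some $\mu \in \R$, and in that case $\mu = \dualp{\Ap[u]}{u}/\dualp{\Bp[u]}{u} = I_p(u)/J_p(u) = \I_{p}(u)$ because $J_p(u) = 1$ on $\M_p$. Putting these together: if $\lambda$ is an eigenvalue, pick $u \in \M_p$ with $\Ap[u] = \lambda\Bp[u]$, so that $\I_{p}'(u) = 0$ and $\I_{p}(u) = \mu = \lambda$; and if $u \in \M_p$ satisfies $\I_{p}'(u) = 0$ and $\I_{p}(u) = \lambda$, then $\Ap[u] = \mu\Bp[u]$ with $\mu = \I_{p}(u) = \lambda$, which exhibits $u$ as a nontrivial solution of the operator equation and hence $\lambda$ as an eigenvalue of \eqref{2}.

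The proof is essentially bookkeeping, so I do not expect a genuine obstacle; the two substantive ingredients are the rescaling of an eigenfunction onto $\M_p$ using degree-$(p-1)$ homogeneity and the identity $\mu = \I_{p}(u)$ for the Lagrange multiplier, and both are already available (the former immediate from the explicit form of $\E_{L,\,p}$, the latter from the paragraph preceding the statement). The one point I would state explicitly is the meaning of an eigenvalue of \eqref{2}, namely that there is a nontrivial $u$ with $\E_{L,\,p}(u,v) = \lambda\int_\Omega |u|^{p-2}uv\,dx$ for all $v \in X^p_0(\Omega)$, which by \eqref{poten-opera} and the definition of $\Bp$ is precisely the operator equation $\Ap[u] = \lambda\Bp[u]$.
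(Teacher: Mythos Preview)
Your argument is correct and matches the paper's approach: the paper does not give a separate proof of this lemma but simply records it as an immediate consequence (``Hence, we have\ldots'') of the Lagrange--multiplier discussion in the preceding paragraph, and your write-up faithfully unpacks exactly that reasoning, including the homogeneity rescaling onto $\M_p$ and the identification $\mu=\I_{p}(u)$.
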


It was shown in \cite{Dyda-Jarohs-Sk} that the first eigenvalue
\begin{equation} \label{3}
\lambda_1 = \inf_{u \in \M_p}\, \I_{p}(u)
\end{equation}
is simple and has a corresponding eigenfunction that is bounded and strictly positive in $\Omega$. Next, we will construct an increasing and unbounded sequence $\seq{\lambda_k}$ of eigenvalues using a suitable minimax scheme. First, we show that the functional $\I_{p}$ satisfies the Palais-Smale compactness condition.

\begin{lemma} \label{Lemma 5}
For all $c \in \R$, $\I_{p}$ satisfies the {\em \PS{c}} condition, i.e., every sequence $\seq{u_j} \subset \M_p$ such that $\I_{p}(u_j) \to c$ and $\I_{p}'(u_j) \to 0$ has a strongly convergent subsequence.
\end{lemma}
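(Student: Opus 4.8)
The plan is to follow the standard strategy for constrained eigenvalue problems: from a \PS{c} sequence extract a subsequence that is bounded in $X^p_0(\Omega)$, pass to a weak limit, produce the associated Lagrange multipliers, and then upgrade weak convergence to strong convergence by invoking the type $(S)$ property of $\Ap$ established in Lemma \ref{Lemma 3}. The two nontrivial ingredients — that $\E_{L,\,p}(u,u)$ agrees with $\half\norm{u}^p$ up to a remainder controlled by $\pnorm{u}^p$ (Lemma \ref{Lemma 1}), and that $\Ap$ is of type $(S)$ (Lemma \ref{Lemma 3}) — are already available, so what remains is bookkeeping.

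First I would establish boundedness. Let $\seq{u_j} \subset \M_p$ with $\I_{p}(u_j) \to c$ and $\I_{p}'(u_j) \to 0$. Since $p\,\I_{p}(u_j) = \E_{L,\,p}(u_j,u_j)$ and $\pnorm{u_j}^p = p$ on $\M_p$ by \eqref{26}, Lemma \ref{Lemma 1} gives
\[
\half \norm{u_j}^p \le \E_{L,\,p}(u_j,u_j) + C\pnorm{u_j}^p = p\,\I_{p}(u_j) + Cp,
\]
which is bounded; hence $\seq{u_j}$ is bounded in $X^p_0(\Omega)$, since $\norm{\cdot} = [\cdot]_p$ is an equivalent norm there. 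Passing to a subsequence, $u_j \wto u$ in $X^p_0(\Omega)$, and by the compact embedding $X^p_0(\Omega) \hookrightarrow L^p(\Omega)$ we obtain $u_j \to u$ in $L^p(\Omega)$; in particular $\pnorm{u}^p = p$, so $u \in \M_p$.

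Next I would handle the Lagrange multipliers. By the formula for the cotangent norm, $\dnorm[u_j]{\I_{p}'(u_j)} = \min_{\mu \in \R}\, \dnorm{\Ap[u_j] - \mu \Bp[u_j]}$, and the map $\mu \mapsto \dnorm{\Ap[u_j] - \mu \Bp[u_j]}$ is convex, continuous, and coercive in $\mu$ because $\dualp{\Bp[u_j]}{u_j} = \pnorm{u_j}^p = p \ne 0$ forces $\Bp[u_j] \ne 0$; hence a minimizer $\mu_j \in \R$ exists with $\dnorm{\Ap[u_j] - \mu_j \Bp[u_j]} = \dnorm[u_j]{\I_{p}'(u_j)} \to 0$. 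Pairing $\Ap[u_j] - \mu_j\Bp[u_j]$ with the bounded sequence $u_j$ and using $\dualp{\Ap[u_j]}{u_j} = p\,\I_{p}(u_j)$ and $\dualp{\Bp[u_j]}{u_j} = p$ yields $p\,\I_{p}(u_j) - p\mu_j \to 0$, so $\mu_j \to c$; in particular $\seq{\mu_j}$ is bounded.

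Finally I would close the argument: write
\[
\dualp{\Ap[u_j]}{u_j - u} = \dualp{\Ap[u_j] - \mu_j\Bp[u_j]}{u_j - u} + \mu_j \dualp{\Bp[u_j]}{u_j - u}.
\]
The first term is bounded by $\dnorm{\Ap[u_j] - \mu_j\Bp[u_j]}\, \norm{u_j - u} \to 0$, since $\seq{u_j}$ is bounded; the second term tends to $0$ because $\seq{\mu_j}$ is bounded, $\abs{u_j}^{p-2}u_j$ is bounded in $L^{p/(p-1)}(\Omega)$, and $u_j \to u$ in $L^p(\Omega)$, so that $\dualp{\Bp[u_j]}{u_j - u} = \int_\Omega \abs{u_j}^{p-2}u_j(u_j - u)\, dx \to 0$. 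Thus $\dualp{\Ap[u_j]}{u_j - u} \to 0$ while $u_j \wto u$, and Lemma \ref{Lemma 3} gives $u_j \to u$ strongly in $X^p_0(\Omega)$, completing the proof. The only point demanding care is the treatment of the multipliers $\mu_j$ — establishing their existence and, crucially, their boundedness — which is precisely where the non-degeneracy $\dualp{\Bp[u_j]}{u_j} = p$ on $\M_p$ enters; beyond that, the argument is routine given Lemmas \ref{Lemma 1} and \ref{Lemma 3}.
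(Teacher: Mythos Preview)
Your proof is correct and follows essentially the same approach as the paper's: bound the sequence via Lemma~\ref{Lemma 1} (the paper uses the equivalent combination of \eqref{15} and Lemma~\ref{Lemma 2}), extract a weak limit, identify and bound the Lagrange multipliers by pairing with $u_j$, show $\dualp{\Ap[u_j]}{u_j - u} \to 0$ via H\"older on the $\Bp$-term, and finish with the type $(S)$ property from Lemma~\ref{Lemma 3}. Your treatment is in fact slightly more explicit about the existence of the minimizing $\mu_j$ and the fact that $u \in \M_p$, but the argument is the same.
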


\begin{proof}
Let $\seq{u_j} \subset \M_p$ such that $\I_{p}(u_j) \to c$ for some $c \in \mathbb{R}$ and $\I_{p}'(u_j) \to 0.$ Then, we have
\[
\I_{p}(u_j) = \frac{1}{p} \dualp{\Ap[u_j]}{u_j} = \frac{1}{p} \dualp{\Ap'u_j}{u_j} + \frac{1}{p} \dualp{\Ap''u_j}{u_j} = \half \norm{u_j}^p + \frac{1}{p} \dualp{\Ap''u_j}{u_j}  \to c.
\]
By Lemma \ref{Lemma 2}, we have
\[
\big|\!\dualp{\Ap''u_j}{u_j}\!\big| \le C \pnorm{u_j}^p = Cp.
\]
This implies that $\seq{u_j}$ is bounded in $X^p_0(\Omega)$. Since $X^p_0(\Omega)$ is reflexive and $X^p_0(\Omega)$ is compactly embedded in $L^p(\Omega)$, then a renamed subsequence of $\seq{u_j}$ converges weakly to some $u \in X^p_0(\Omega)$ and $u_j \to u$ in $L^p(\Omega)$. Since
\[
\dnorm[u_j]{\I_{p}'(u_j)} = \min_{\mu \in \R}\, \dnorm{\Ap[u_j] - \mu \Bp[u_j]} \to 0,
\]
we have
\begin{equation}\label{00}
    \dualp{\Ap[u_j]}{v} - \mu_j \dualp{\Bp[u_j]}{v} = \o(\norm{v}) \quad \forall v \in X^p_0(\Omega)
\end{equation}
for some sequence of Lagrange multipliers $\seq{\mu_j} \subset \R$. Since $\seq{u_j}$ is bounded in $X^p_0(\Omega)$ and
\[
\dualp{\Ap[u_j]}{u_j} = p\, \I(u_j) \to pc, \qquad \dualp{\Bp[u_j]}{u_j} = \pnorm{u_j}^p = p,
\]
taking $v = u_j$ shows that $\mu_j \to c$. Now taking $v = u_j - u$ and by applying H\"older inequality, we obtain
\[
\abs{\dualp{\Bp[u_j]}{u_j - u}} = \abs{\int_\Omega |u_j|^{p-2}\, u_j\, (u_j - u)\, dx} \le \pnorm{u_j}^{p-1} \pnorm{u_j - u} \to 0.
\]
This further in view of \eqref{00} implies that $\dualp{\Ap[u_j]}{u_j - u} \to 0$. Finally, by Lemma \ref{Lemma 3}, we get the desired conclusion.
\end{proof}

Next, we construct an unbounded sequence of minimax eigenvalues of problem \eqref{2}. Although this can be done using Krasnoselskii's genus, we prefer to use the $\Z_2$-cohomological index of Fadell and Rabinowitz (see \cite{MR0478189}) in order to obtain nontrivial critical groups and construct linking sets.

\begin{definition}[Fadell and Rabinowitz \cite{MR0478189}] \label{Definition 1}
Let $W$ be a Banach space and let $\A$ denote the class of symmetric subsets of $W \setminus \set{0}$. For $A \in \A$, let $\overline{A} = A/\Z_2$ be the quotient space of $A$ with each $u$ and $-u$ identified, let $f : \overline{A} \to \RP^\infty$ be the classifying map of $\overline{A}$, and let $f^\ast : H^\ast(\RP^\infty) \to H^\ast(\overline{A})$ be the induced homomorphism of the Alexander-Spanier cohomology rings. The cohomological index of $A$ is defined by
\[
i(A) = \begin{cases}
0 & \text{if } A = \emptyset\\[5pt]
\sup \set{m \ge 1 : f^\ast(\omega^{m-1}) \ne 0} & \text{if } A \ne \emptyset,
\end{cases}
\]
where $\omega \in H^1(\RP^\infty)$ is the generator of the polynomial ring $H^\ast(\RP^\infty) = \Z_2[\omega]$.
\end{definition}

\begin{example}
The classifying map of the unit sphere $S^N$ in $\R^{N+1},\, N \ge 0$ is the inclusion $\RP^N \incl \RP^\infty$, which induces isomorphisms on the cohomology groups $H^l$ for $l \le N$, so $i(S^N) = N + 1$.
\end{example}
The following proposition summarizes the basic properties of the cohomological index.
\begin{proposition}[Fadell and Rabinowitz \cite{MR0478189}] \label{Proposition 7}
The index $i : \A \to \N \cup \set{0,\infty}$ has the following properties:
\begin{enumerate}
\item[$(i_1)$] Definiteness: $i(A) = 0$ if and only if $A = \emptyset$.
\item[$(i_2)$] Monotonicity: If there is an odd continuous map from $A$ to $B$ (in particular, if $A \subset B$), then $i(A) \le i(B)$. Thus, equality holds when the map is an odd homeomorphism.
\item[$(i_3)$] Dimension: $i(A) \le \dim W$.
\item[$(i_4)$] Continuity: If $A$ is closed, then there is a closed neighborhood $N \in \A$ of $A$ such that $i(N) = i(A)$. When $A$ is compact, $N$ may be chosen to be a $\delta$-neighborhood $N_\delta(A) = \set{u \in W : \dist(u,A) \le \delta}$.
\item[$(i_5)$] Subadditivity: If $A$ and $B$ are closed, then $i(A \cup B) \le i(A) + i(B)$.
\item[$(i_6)$] Stability: If $\Sigma A$ is the suspension of $A \ne \emptyset$, obtained as the quotient space of $A \times [-1,1]$ with $A \times \set{1}$ and $A \times \set{-1}$ collapsed to different points, then $i(\Sigma A) = i(A) + 1$.
\item[$(i_7)$] Piercing property: If $C$, $C_0$, and $C_1$ are closed and $\varphi : C \times [0,1] \to C_0 \cup C_1$ is a continuous map such that $\varphi(-u,t) = - \varphi(u,t)$ for all $(u,t) \in C \times [0,1]$, $\varphi(C \times [0,1])$ is closed, $\varphi(C \times \set{0}) \subset C_0$, and $\varphi(C \times \set{1}) \subset C_1$, then $i(\varphi(C \times [0,1]) \cap C_0 \cap C_1) \ge i(C)$.
\item[$(i_8)$] Neighborhood of zero: If $U$ is a bounded closed symmetric neighborhood of $0$, then $i(\bdry{U}) = \dim W$.
\end{enumerate}
\end{proposition}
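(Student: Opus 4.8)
The plan is to reduce the eight items to three structural inputs: \emph{naturality} --- an odd continuous map $g:A\to B$ descends to $\overline g:\overline A\to\overline B$ with $f_B\comp\overline g$ a classifying map for $\overline A$, so $f_A^\ast=\overline g^\ast\comp f_B^\ast$; the \emph{continuity property} of Alexander--Spanier cohomology, namely that $H^\ast(\overline A)$ is the direct limit of the $H^\ast(\overline N)$ over closed neighborhoods $N$ of a closed set $A$ in a metric space; and the \emph{dimension bound} $H^m(\overline A)=0$ once $m$ exceeds the covering dimension of $\overline A$. Granting naturality, $(i_1)$ is immediate: $A\ne\emptyset$ gives $\overline A\ne\emptyset$, hence $f^\ast(\omega^0)=1\ne0$ and $i(A)\ge1$, while $i(\emptyset)=0$ by definition. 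And $(i_2)$ follows since $f_B^\ast(\omega^{m-1})=0$ forces $f_A^\ast(\omega^{m-1})=\overline g^\ast f_B^\ast(\omega^{m-1})=0$; an odd homeomorphism applies this in both directions and yields equality.

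For $(i_3)$, the $2$-to-$1$ covering $A\to\overline A$ gives $\dim\overline A\le\dim W$, so the dimension bound gives $f_A^\ast(\omega^m)=0$ for $m\ge\dim W$, i.e.\ $i(A)\le\dim W$. For $(i_4)$, monotonicity already gives $i(A)\le i(N)$ for every neighborhood $N\supset A$; conversely, since $f_A^\ast(\omega^{i(A)})=0$ and this class is the image of $f_N^\ast(\omega^{i(A)})$ under the restriction $H^\ast(\overline N)\to H^\ast(\overline A)$, the continuity property lets us take a small enough closed $N$ with $f_N^\ast(\omega^{i(A)})=0$, so $i(N)\le i(A)$ and thus $i(N)=i(A)$; when $A$ is compact the $\delta$-neighborhoods $N_\delta(A)$ are cofinal among its closed neighborhoods, so one of them works. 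For $(i_8)$, the radial projection $u\mapsto u/\norm{u}$ is an odd continuous map $\bdry U\to S^{\dim W-1}$ and $v\mapsto r(v)\,v$ with $r(v)=\sup\set{t>0:tv\in U}$ is an odd continuous map back into $\bdry U$, so by $(i_2)$ and the Example $i(S^{\dim W-1})=\dim W$ we get $i(\bdry U)=\dim W$.

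For subadditivity $(i_5)$, take $m=i(A)$, $n=i(B)$ (both finite, the infinite case being trivial). Since $f_A^\ast(\omega^m)=0$, the class $f_{A\cup B}^\ast(\omega^m)$ restricts to $0$ on $\overline A$, hence lifts through the long exact sequence of the pair to some $\alpha\in H^m(\overline{A\cup B},\overline A)$; likewise $f_{A\cup B}^\ast(\omega^n)$ lifts to $\beta\in H^n(\overline{A\cup B},\overline B)$. As $\overline{A\cup B}=\overline A\cup\overline B$, the relative cup product $\alpha\smile\beta$ lies in $H^{m+n}(\overline{A\cup B},\overline A\cup\overline B)=H^{m+n}(\overline{A\cup B},\overline{A\cup B})=0$, while it maps to $f_{A\cup B}^\ast(\omega^m)\smile f_{A\cup B}^\ast(\omega^n)=f_{A\cup B}^\ast(\omega^{m+n})$ in absolute cohomology; hence $f_{A\cup B}^\ast(\omega^{m+n})=0$ and $i(A\cup B)\le m+n$. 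Stability $(i_6)$ is the analogous computation for the quotient $\overline{\Sigma A}$, whose cohomology is tied to that of $\overline A$ by a Gysin-type exact sequence in which the shift is cup product with $\omega$; this, together with $(i_1)$--$(i_5)$ and $(i_8)$, is carried out in Fadell and Rabinowitz \cite{MR0478189}.

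The main obstacle is the piercing property $(i_7)$: it is the only item that is not essentially formal, and it is exactly the mechanism used later to construct linking sets. The plan is to argue by contradiction --- put $D=\varphi(C\times[0,1])\cap C_0\cap C_1$, suppose $i(D)<i(C)$, and use $(i_4)$ to pick a closed symmetric neighborhood $N$ of $D$ with $i(N)=i(D)$. For each $u\in C$ the path $t\mapsto\varphi(u,t)$ runs from $C_0$ to $C_1$ while staying in $C_0\cup C_1$, so $t\mapsto\dist(\varphi(u,t),C_1)-\dist(\varphi(u,t),C_0)$ changes sign, and at a first zero $t(u)$ one has $\varphi(u,t(u))\in C_0\cap C_1\subset\operatorname{int}N$; the hypotheses $\varphi(C\times\set0)\incl C_0$, $\varphi(C\times\set1)\incl C_1$ and the closedness of $\varphi(C\times[0,1])$ are what make this selection meaningful. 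Regularizing the (a priori only semicontinuous) assignment $u\mapsto\varphi(u,t(u))$ into a genuinely continuous odd map $C\to N$, using that $\set{(u,t):\varphi(u,t)\in\operatorname{int}N}$ is open and contains the graph of $t(\cdot)$, then gives $i(C)\le i(N)=i(D)$ by $(i_2)$, a contradiction. I expect the delicate point to be this equivariant regularization --- obtaining a continuous odd selection compatible with the $\Z_2$-action --- and for the precise construction I would follow Perera et al.\ \cite{MR2640827}.
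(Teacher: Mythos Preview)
The paper does not prove Proposition~\ref{Proposition 7} at all: it is stated as a summary of known properties of the Fadell--Rabinowitz index, with the attribution ``Fadell and Rabinowitz \cite{MR0478189}'' in the proposition heading and no proof environment following it. So there is no ``paper's own proof'' to compare your attempt against; you have simply gone further than the authors by sketching the arguments behind the cited result.

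As to the sketch itself, the arguments for $(i_1)$--$(i_5)$ are the standard ones and are correct. Your treatment of $(i_8)$ tacitly assumes $\dim W<\infty$ when you invoke $S^{\dim W-1}$; in the infinite-dimensional case one instead argues that $\bdry U$ contains (via odd maps) spheres of every finite dimension, forcing $i(\bdry U)=\infty$. For $(i_6)$ you defer to \cite{MR0478189}, which is appropriate. For $(i_7)$ your outline captures the right idea --- produce an odd continuous map $C\to N$ into a small-index neighborhood of $D$ --- but, as you yourself flag, the passage from the semicontinuous first-crossing time $t(u)$ to a genuine continuous odd selection is the substantive step and is not carried out here; referring to \cite{MR2640827} for the equivariant regularization is reasonable in a sketch but would not stand on its own as a proof. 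Since the paper itself treats the whole proposition as a black-box citation, none of this is a discrepancy with the paper.
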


In view of Lemma \ref{Lemma 5}, we have the following theorem (see Perera et al.\! \cite[Proposition 3.52, Proposition 3.53 and Theorem 4.6]{MR2640827}).

\begin{theorem} \label{Theorem 1}
Let $\F$ denote the class of symmetric subsets of $\M_p$. For $k \ge 1$, let
\[
\F_k = \set{M \in \F : i(M) \ge k}
\]
and set
\[
\lambda_k := \inf_{M \in \F_k}\, \sup_{u \in M}\, \I_{p}(u).
\]
Then $\lambda_k \nearrow \infty$ is a sequence of eigenvalues of problem \eqref{2}.
\begin{enumroman}
\item \label{Theorem 1.i} If $\lambda_k = \dotsb = \lambda_{k+m-1} = \lambda$ and $E_\lambda$ is the set of eigenfunctions associated with $\lambda$ that lie on $\M_p$, then
    \[
    i(E_\lambda) \ge m.
    \]
\item \label{Theorem 1.iii} If $\lambda_k < \lambda < \lambda_{k+1}$, then
    \[
    i(\I^{\lambda_k}) = i(\M \setminus \I_{\lambda}) = i(\I^\lambda) = i(\M \setminus \I_{\lambda_{k+1}}) = k,
    \]
    where $\I^a = \set{u \in \M : \I_{p}(u) \le a}$ and $\I_{a} = \set{u \in \M : \I_{p}(u) \ge a}$ for $a \in \R$.
\end{enumroman}
\end{theorem}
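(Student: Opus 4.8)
The plan is to invoke the abstract critical point theory for even functionals on symmetric manifolds, as developed in Perera--Agarwal--O'Regan \cite{MR2640827}, with the cohomological index $i$ playing the role of the index theory. The key inputs are all in place: $\I_p$ is an even $C^1$ functional on the symmetric $C^1$-Finsler manifold $\M_p$, it is bounded below by $\lambda_1 > 0$ (since $\I_p(u) = \tfrac12\norm{u}^p + \tfrac1p\dualp{\Ap''u}{u} \ge \lambda_1$ on $\M_p$ by \eqref{3}), and by Lemma \ref{Lemma 5} it satisfies \PS{c} for every $c \in \R$. The class $\F_k = \set{M \in \F : i(M) \ge k}$ is a legitimate family for a minimax construction because $\F_k \ne \emptyset$ for every $k$ (the unit sphere $S^{k-1}$ of any $k$-dimensional subspace of $X^p_0(\Omega)$, suitably rescaled to land on $\M_p$, has index $k$ by the Example and $(i_2)$), and $\F_k$ is invariant under odd homeomorphisms and satisfies the subadditivity-type deformation property needed for the minimax principle.

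First I would verify that $\lambda_k$ is well defined and finite: $\lambda_k \ge \lambda_1$ for all $k$ by the lower bound on $\I_p$, and $\lambda_k < \infty$ because $\F_k$ contains a compact set $M$ on which $\sup_{u\in M}\I_p(u) < \infty$ by continuity. Monotonicity $\lambda_k \le \lambda_{k+1}$ is immediate since $\F_{k+1} \subset \F_k$. That $\seq{\lambda_k}$ is a sequence of eigenvalues, and in particular that each $\lambda_k$ is a critical value of $\I_p$ (hence an eigenvalue of \eqref{2} by Lemma \ref{Lemma 4}), follows from the standard deformation argument: if $\lambda_k$ were a regular value then, using \PS{$\lambda_k$} and the deformation lemma on the symmetric manifold $\M_p$, one could push a set $M \in \F_k$ with $\sup_M \I_p$ close to $\lambda_k$ below $\lambda_k$ by an odd deformation, which by $(i_2)$ preserves the index and contradicts the definition of $\lambda_k$. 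For part \ref{Theorem 1.i}, the multiplicity statement, the argument is the usual one: if $\lambda_k = \dots = \lambda_{k+m-1} = \lambda$ but $i(E_\lambda) \le m-1$, then by the continuity property $(i_4)$ there is a symmetric closed neighborhood $U$ of $E_\lambda$ with $i(U) = i(E_\lambda) \le m-1$, and deforming $\I^{\lambda+\eps}$ into $U \cup \I^{\lambda-\eps}$ away from the critical set gives, via subadditivity $(i_5)$ and monotonicity, $k+m-1 \le i(\I^{\lambda+\eps}) \le i(U) + i(\I^{\lambda-\eps}) \le (m-1) + (k-1)$, a contradiction.

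For part \ref{Theorem 1.iii}, assume $\lambda_k < \lambda < \lambda_{k+1}$. The inclusions $\I^{\lambda_k} \subset \I^\lambda$ and $\M_p \setminus \I_{\lambda_{k+1}} \subset \M_p \setminus \I_\lambda$ are clear, and since $\lambda$ is a regular value with no critical values in $(\lambda_k,\lambda_{k+1})$, the negative gradient flow of $\I_p$ on $\M_p$ deformation-retracts $\I^\lambda$ onto $\I^{\lambda_k}$ and $\M_p \setminus \I_{\lambda}$ onto $\M_p \setminus \I_{\lambda_{k+1}}$; these deformations are odd, so by $(i_2)$ all four index values coincide. It then remains to show the common value equals $k$. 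The inequality $i(\I^{\lambda_k}) \ge k$ is delicate — actually one shows $i(\I^\lambda) \ge k$ directly: any $M \in \F_k$ with $\sup_M \I_p < \lambda$ (which exists since $\lambda > \lambda_k$) lies in $\I^\lambda$, giving $i(\I^\lambda) \ge i(M) \ge k$. For the reverse inequality $i(\M_p \setminus \I_{\lambda_{k+1}}) \le k$: if it were $\ge k+1$, then $\M_p \setminus \I_{\lambda_{k+1}}$ would be an element of $\F_{k+1}$ on which $\sup \I_p \le \lambda_{k+1}$; a small deformation pushing off the (compact) critical set at level $\lambda_{k+1}$ would then produce a set in $\F_{k+1}$ with sup strictly below $\lambda_{k+1}$, contradicting the definition of $\lambda_{k+1}$. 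I expect the main obstacle to be the careful bookkeeping in this last step: one must combine the piercing property $(i_7)$ or the subadditivity $(i_5)$ with the deformation lemma on the manifold $\M_p$ (not a linear space) to handle the possibly degenerate critical set at level $\lambda_{k+1}$, ensuring all constructions stay symmetric and the index estimates chain together correctly. Since all of this is carried out in the cited results \cite[Proposition 3.52, Proposition 3.53, Theorem 4.6]{MR2640827}, the proof reduces to checking that our $\I_p$ and $\M_p$ satisfy their hypotheses — evenness, $C^1$-Finsler manifold structure, boundedness below, and the \PS{} condition from Lemma \ref{Lemma 5} — which we have done above.
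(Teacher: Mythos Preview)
Your approach is essentially identical to the paper's: the paper proves Theorem \ref{Theorem 1} by a one-line invocation of \cite[Proposition 3.52, Proposition 3.53 and Theorem 4.6]{MR2640827} after Lemma \ref{Lemma 5} supplies the \PS{} condition, and you do the same, additionally sketching the standard deformation and index arguments that underlie those cited results. One minor slip: you write ``bounded below by $\lambda_1 > 0$,'' but for the logarithmic $p$-Laplacian the form $\E_{L,p}$ is sign-changing and $\lambda_1$ may well be negative (cf.\ the case split $\lambda_1 \le -1$ in the proof of Theorem \ref{thm:existence:lambda1}); what you need---and what \eqref{3} gives---is just that $\I_p$ is bounded below on $\M_p$, which is enough.
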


\begin{remark}
Problem \eqref{2} may have eigenvalues other than those of the sequence $\seq{\lambda_k}$.
\end{remark}

The variational functional associated with the eigenvalue problem \eqref{2} is
\[
\Phi_\lambda(u) = I_p(u) - \lambda J_p(u), \quad u \in X^p_0(\Omega)
\]
(see the proof of \cite[Lemma 4.3]{AroGiaHajVai-2025}). When $\lambda$ is not an eigenvalue, the origin is the only critical point of $\Phi_\lambda$ and its critical groups there are given by
\[
C^l(\Phi_\lambda,0) = H^l(\Phi_\lambda^0,\Phi_\lambda^0 \setminus \set{0}), \quad l \ge 0,
\]
where $\Phi_\lambda^0 = \set{u \in X^p_0(\Omega) : \Phi_\lambda(u) \le 0}$. We have the following theorem, where $\widetilde{H}^\ast$ denotes reduced cohomology.

\begin{theorem} \label{Theorem 2}
Assume that $\lambda$ is not an eigenvalue.
\begin{enumroman}
\item \label{Theorem 2.i} If $\lambda < \lambda_1$, then $C^l(\Phi_\lambda,0) \isom \delta_{l0}\, \Z_2$, where $\delta$ denotes the Kronecker delta.
\item \label{Theorem 2.ii} If $\lambda > \lambda_1$, then $C^l(\Phi_\lambda,0) \isom \widetilde{H}^{l-1}(\I^\lambda)$, in particular, $C^0(\Phi_\lambda,0) = 0$.
\item \label{Theorem 2.iii} If $\lambda_k < \lambda < \lambda_{k+1}$, then $C^k(\Phi_\lambda,0) \ne 0$.
\end{enumroman}
In particular, $C^l(\Phi_\lambda,0)$ is nontrivial for some $l$.
\end{theorem}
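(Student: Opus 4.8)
The plan is to exploit the positive $p$-homogeneity of $\Phi_\lambda$. Both $I_p$ and $J_p$ are positively homogeneous of degree $p$, since $\E_{L,\,p}(u,u)$ and $J_p(u)$ are built from terms of the form $|w|^p$ with $w$ linear in $u$; hence $\Phi_\lambda(tu)=t^p\,\Phi_\lambda(u)$ for every $t\ge0$. Using the radial homeomorphism $X^p_0(\Omega)\setminus\set{0}\isom(0,\infty)\times\M_p$, $u\mapsto\seq{J_p(u)^{1/p},\,J_p(u)^{-1/p}u}$, with inverse $(t,w)\mapsto tw$, together with the identity $\Phi_\lambda(tw)=t^p\seq{\I_{p}(w)-\lambda}$ valid for $t>0$ and $w\in\M_p$, one sees that $\Phi_\lambda^0\setminus\set{0}$ corresponds exactly to $(0,\infty)\times\I^\lambda$, while $\Phi_\lambda^0$ is star-shaped about the origin (if $\Phi_\lambda(u)\le0$ and $s\in[0,1]$, then $\Phi_\lambda(su)=s^p\Phi_\lambda(u)\le0$), hence contractible.

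\textbf{Parts \ref{Theorem 2.i} and \ref{Theorem 2.ii}.} Part \ref{Theorem 2.i} is then immediate: if $\lambda<\lambda_1=\inf_{\M_p}\I_{p}$, then $I_p(w)=\I_{p}(w)\ge\lambda_1>\lambda$ on $\M_p$, so by homogeneity $\Phi_\lambda(u)\ge(\lambda_1-\lambda)\,J_p(u)>0$ for all $u\ne0$, giving $\Phi_\lambda^0=\set{0}$, $\Phi_\lambda^0\setminus\set{0}=\emptyset$, and $C^l(\Phi_\lambda,0)=H^l(\text{point})\isom\delta_{l0}\,\Z_2$. For part \ref{Theorem 2.ii}, the hypothesis $\lambda>\lambda_1=\inf_{\M_p}\I_{p}$ gives $\I^\lambda\ne\emptyset$; since $(0,\infty)\times\I^\lambda$ deformation retracts onto $\I^\lambda$ and $\Phi_\lambda^0$ is contractible, the long exact cohomology sequence (with reduced coefficients) of the pair $\seq{\Phi_\lambda^0,\Phi_\lambda^0\setminus\set{0}}$ yields
\[
C^l(\Phi_\lambda,0)=H^l\seq{\Phi_\lambda^0,\,\Phi_\lambda^0\setminus\set{0}}\isom\widetilde H^{\,l-1}\seq{\Phi_\lambda^0\setminus\set{0}}\isom\widetilde H^{\,l-1}(\I^\lambda),
\]
and in particular $C^0(\Phi_\lambda,0)\isom\widetilde H^{-1}(\I^\lambda)=0$ because $\I^\lambda\ne\emptyset$.

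\textbf{Part \ref{Theorem 2.iii} and the final assertion.} Here Theorem \ref{Theorem 1}\ref{Theorem 1.iii} gives $i(\I^\lambda)=k$, so in view of part \ref{Theorem 2.ii} it remains to prove the index lemma: for a closed symmetric set $A$ in $W\setminus\set{0}$ with $i(A)=k<\infty$ one has $\widetilde H^{\,k-1}(A;\Z_2)\ne0$. For $k\ge2$ I would read this off the Gysin sequence of the double cover $A\to\overline A=A/\Z_2$ with class $\bar\omega=f^\ast(\omega)\in H^1(\overline A)$: it contains a segment $H^{k-1}(A)\to H^{k-1}(\overline A)\xrightarrow{\,\cup\,\bar\omega\,}H^k(\overline A)$ exact at $H^{k-1}(\overline A)$, and since $\bar\omega^{k-1}\ne0$ (as $i(A)\ge k$) while $\bar\omega\cup\bar\omega^{k-1}=\bar\omega^k=0$ (as $i(A)\le k$), the class $\bar\omega^{k-1}$ is a nonzero element of $\ker(\cup\bar\omega)$ and hence comes from a nonzero class in $H^{k-1}(A)=\widetilde H^{\,k-1}(A)$; when $k=1$, the same sequence in degree $0$, with $\bar\omega=0$, forces $A$ to be disconnected, so $\widetilde H^{\,0}(A)\ne0$. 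Taking $A=\I^\lambda$ gives $C^k(\Phi_\lambda,0)\isom\widetilde H^{\,k-1}(\I^\lambda)\ne0$. Finally, the concluding assertion follows by combining part \ref{Theorem 2.i} (for $\lambda<\lambda_1$) with part \ref{Theorem 2.iii} (if $\lambda>\lambda_1$ is not an eigenvalue, then, as $\lambda_k\nearrow\infty$, it lies in some gap $(\lambda_k,\lambda_{k+1})$ with $k\ge1$).

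\textbf{Main obstacle.} The homogeneity computation, the cone description of $\Phi_\lambda^0$, and the exact-sequence bookkeeping are routine; the one genuinely delicate point is the index lemma behind part \ref{Theorem 2.iii}. The Gysin sequence has to be invoked for the possibly infinite-dimensional, merely paracompact symmetric set $\I^\lambda$, which is precisely why the Alexander-Spanier (\v Cech) cohomology with $\Z_2$ coefficients underlying the Fadell-Rabinowitz index is the right theory here; if preferred, this lemma may instead be cited as a known structural property of the index, in which case part \ref{Theorem 2.iii} is immediate from part \ref{Theorem 2.ii} and Theorem \ref{Theorem 1}\ref{Theorem 1.iii}. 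One should also note at the outset that $C^l(\Phi_\lambda,0)=H^l(\Phi_\lambda^0,\Phi_\lambda^0\setminus\set{0})$ does compute the critical group, since $\lambda$ not being an eigenvalue means the origin is the only critical point of $\Phi_\lambda$.
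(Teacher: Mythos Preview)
Your proof is correct and follows essentially the same route as the paper: exploit the $p$-homogeneity of $\Phi_\lambda$ to see that $\Phi_\lambda^0$ is contractible and that $\Phi_\lambda^0\setminus\set{0}$ deformation retracts onto $\I^\lambda=\Phi_\lambda^0\cap\M_p$, then read off the critical groups from the long exact sequence of the pair and invoke Theorem~\ref{Theorem 1}\ref{Theorem 1.iii}. The only difference is cosmetic---the paper cites \cite[Proposition~2.4 and Proposition~2.14\,(iv)]{MR2640827} for the exact-sequence step and for the implication $i(\I^\lambda)=k\Rightarrow\widetilde H^{k-1}(\I^\lambda)\ne0$, whereas you spell out the Gysin-sequence argument for the latter directly.
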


\begin{proof}
If $u \in \Phi_\lambda^0$, then $tu \in \Phi_\lambda^0$ for all $t \in [0,1]$ since $\Phi_\lambda(tu) = t^p \Phi_\lambda(u)$. This implies that $\Phi_\lambda^0$ contracts to $\set{0}$ via
\[
\Phi_\lambda^0 \times [0,1] \to \Phi_\lambda^0, \quad (u,t) \mapsto (1 - t)u
\]
and $\Phi_\lambda^0 \setminus \set{0}$ deformation retracts to $\Phi_\lambda^0 \cap \M_p$ via
\[
(\Phi_\lambda^0 \setminus \set{0}) \times [0,1] \to \Phi_\lambda^0 \setminus \set{0}, \quad (u,t) \mapsto (1 - t)u + \frac{p^{1/p}\, tu}{\pnorm{u}}.
\]
It follows that
\[
C^l(\Phi_\lambda,0) \isom \begin{cases}
\delta_{l0}\, \Z_2 & \text{if } \Phi_\lambda^0 \cap \M_p = \emptyset\\[7.5pt]
\widetilde{H}^{l-1}(\Phi_\lambda^0 \cap \M_p) & \text{if } \Phi_\lambda^0 \cap \M_p \ne \emptyset
\end{cases}
\]
(see, e.g., Perera et al.\! \cite[Proposition 2.4]{MR2640827}). Since
\[
\Phi_\lambda(u) = \I_{p}(u) - \lambda
\]
for $u \in \M_p$, $\Phi_\lambda^0 \cap \M_p = \I^\lambda$, so we have
\[
C^l(\Phi_\lambda,0) \isom \begin{cases}
\delta_{l0}\, \Z_2 & \text{if } \I^\lambda = \emptyset\\[7.5pt]
\widetilde{H}^{l-1}(\I^\lambda) & \text{if } \I^\lambda \ne \emptyset.
\end{cases}
\]
Since $\I^\lambda = \emptyset$ if and only if $\lambda < \lambda_1$ by \eqref{3}, \ref{Theorem 2.i} and \ref{Theorem 2.ii} follow from this. If $\lambda_k < \lambda < \lambda_{k+1}$, then $i(\I^\lambda) = k$ by Theorem \ref{Theorem 1} \ref{Theorem 1.iii} and hence $\widetilde{H}^{k-1}(\I^\lambda) \ne 0$ by Perera et al.\! \cite[Proposition 2.14 ({\em iv})]{MR2640827}, so \ref{Theorem 2.iii} follows from \ref{Theorem 2.ii}.
\end{proof}


\section{$p$-logarithmic Sobolev inequality}\label{sec:new-log-ineq}
In this section, we derive a series of preliminary results that rely on the following $p$-logarithmic Sobolev inequality established in Arora et al.\! \cite{AroGiaHajVai-2025}.

\begin{theorem}[{\cite[Theorem 2.5]{AroGiaHajVai-2025}}]
For all $u \in X^p_0(\Omega)$,
\begin{equation} \label{4}
\frac{p^2}{N} \int_\Omega |u|^p \ln |u|\, dx \le \E_{L,\,p}(u,u) + \frac{p^2}{N} \pnorm{u}^p \ln \pnorm{u} + k_0(N,p) \pnorm{u}^p
\end{equation}
for a suitable constant $k_0(N,p)$.
\end{theorem}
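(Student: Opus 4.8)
The plan is to obtain \eqref{4} by differentiating the fractional $p$-Sobolev inequality at $s=0^+$, exploiting that $L_{\Delta_p}$ is the $s$-derivative of $(-\Delta_p)^s$. First observe that \eqref{4} is homogeneous of degree $p$: replacing $u$ by $tu$ with $t>0$ multiplies every term by $t^p$ and adds $\frac{p^2}{N}\,t^p\pnorm{u}^p\ln t$ to both sides (note $\E_{L,p}(tu,tu)=t^p\E_{L,p}(u,u)$, immediate from the formula defining $\E_{L,p}$). Hence it is enough to prove, for $u$ with $\pnorm{u}=1$,
\[
\frac{p^2}{N}\int_\Omega |u|^p\ln|u|\,dx \ \le\ \E_{L,p}(u,u)+k_0(N,p).
\]
For $s\in(0,\min\{1,N/p\})$ and $v$ vanishing outside $\Omega$, write $[v]_{s,p}^p=\iint_{\R^N\times\R^N}\frac{|v(x)-v(y)|^p}{|x-y|^{N+sp}}\,dxdy$, let $\E_s(v,v)=\frac{C_{N,s,p}}{2}[v]_{s,p}^p$ be the quadratic form of $(-\Delta_p)^s$, and put $p_s^\ast=\frac{Np}{N-sp}$. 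Two ingredients are needed: \emph{(i)} for every $u\in X^p_0(\Omega)$, $\E_s(u,u)=\pnorm{u}^p+s\,\E_{L,p}(u,u)+\o(s)$ as $s\to0^+$ (the infinitesimal form of $L_{\Delta_p}=\frac{d}{ds}\big|_{s=0}(-\Delta_p)^s$, cf.\ \cite{Dyda-Jarohs-Sk,Foghem-Nodea-2025}); and \emph{(ii)} the normalized sharp Sobolev constant $\Sigma_s:=\frac{C_{N,s,p}}{2}S_{N,s,p}$, with $S_{N,s,p}$ optimal in $[v]_{s,p}^p\ge S_{N,s,p}\|v\|_{L^{p_s^\ast}(\R^N)}^p$, satisfies $\Sigma_s\ge 1-k_0(N,p)\,s+\o(s)$ as $s\to0^+$ for a suitable constant $k_0(N,p)$.

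\smallskip\noindent
Granting \emph{(i)} and \emph{(ii)}, here is the computation for $\pnorm{u}=1$. Since $|u|^p\,dx$ is then a probability measure on $\Omega$ and $r:=p_s^\ast-p=\frac{sp^2}{N-sp}>0$, Jensen's inequality applied to the concave function $\ln$ gives $\int_\Omega|u|^{p_s^\ast}\,dx=\int_\Omega|u|^{r}|u|^p\,dx\ge\exp\!\big(r\int_\Omega|u|^p\ln|u|\,dx\big)$; raising to the power $p/p_s^\ast$ and using $\frac{r}{p_s^\ast}=\frac{sp}{N}$ yields
\[
\|u\|_{L^{p_s^\ast}(\R^N)}^p\ \ge\ \exp\!\Big(\tfrac{sp^2}{N}\int_\Omega|u|^p\ln|u|\,dx\Big).
\]
Inserting this into the fractional Sobolev inequality in the form $\E_s(u,u)\ge\Sigma_s\|u\|_{L^{p_s^\ast}(\R^N)}^p$ and taking logarithms,
\[
\frac{sp^2}{N}\int_\Omega|u|^p\ln|u|\,dx\ \le\ \ln\E_s(u,u)-\ln\Sigma_s .
\]
Dividing by $s$ and letting $s\to0^+$: by \emph{(i)}, $\frac1s\ln\E_s(u,u)=\frac1s\ln\!\big(1+s\E_{L,p}(u,u)+\o(s)\big)\to\E_{L,p}(u,u)$, and by \emph{(ii)}, $-\frac1s\ln\Sigma_s\le-\frac1s\ln\!\big(1-k_0 s+\o(s)\big)\to k_0(N,p)$. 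Since the left-hand side does not depend on $s$, it is bounded by the $\liminf$ as $s\to0^+$ of the right-hand side, namely $\E_{L,p}(u,u)+k_0(N,p)$. This is the normalized inequality, and undoing the normalization gives \eqref{4} for all $u\neq0$ (the case $u=0$ being trivial). For a general, possibly nonsmooth, $u\in X^p_0(\Omega)$ I would run the argument first on $C^\infty_c(\Omega)$, where \emph{(i)} is classical, and then pass to the limit using the continuity on $X^p_0(\Omega)$ of $\E_{L,p}(\cdot,\cdot)$ and of $u\mapsto\int_\Omega|u|^p\ln|u|\,dx$ (the latter controlled by the $t^p\ln t$-type Orlicz embedding of \cite{AroGiaHajVai-2025}); alternatively, \emph{(i)} can be justified directly on all of $X^p_0(\Omega)$.

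\smallskip\noindent
The main obstacle is ingredient \emph{(ii)}. When $p=2$ the sharp constant $S_{N,s,2}$ is an explicit ratio of Gamma functions and the expansion can be read off (this is essentially the route of Chen--Weth for the logarithmic Laplacian); for $p\ne2$ there is no closed form, so one has to analyze the optimizers of the fractional Sobolev quotient as $s\to0^+$ directly. The leading orders $C_{N,s,p}\sim c\,s$ and $S_{N,s,p}\sim c^{-1}s^{-1}$, which already give $\Sigma_s\to1$, are governed by the Maz'ya--Shaposhnikova-type limit $\lim_{s\to0^+}s[v]_{s,p}^p=\gamma(N,p)\pnorm{v}^p$; the delicate point is to control the $O(s)$ correction in $\Sigma_s$, which is exactly what produces a finite $k_0(N,p)$, and this seems to require a quantitative description of the extremal profiles near $s=0$. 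Ingredient \emph{(i)}, the Jensen/entropy step, and the passage to the limit are by comparison routine.
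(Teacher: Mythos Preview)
First, note that the paper does not prove this statement: it is quoted as \cite[Theorem~2.5]{AroGiaHajVai-2025} and used as a black box throughout Section~\ref{sec:new-log-ineq}. There is therefore no in-paper proof to compare against; what follows is an assessment of your argument on its own merits.

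Your overall strategy---differentiate the fractional $p$-Sobolev inequality at $s=0^+$---is the natural one, and the routine pieces are correct. The homogeneity reduction to $\pnorm{u}=1$ is valid; the Jensen step yielding $\|u\|_{L^{p_s^\ast}}^{\,p}\ge\exp\big(\tfrac{sp^2}{N}\int_\Omega|u|^p\ln|u|\,dx\big)$ is clean; and the limit computation under your hypotheses (i) and (ii) is right (take $\limsup$ rather than $\liminf$ of the right-hand side, a cosmetic point). One technical remark on (i): $X^p_0(\Omega)$ is \emph{not} contained in $W^{s,p}$ for $s>0$, since the kernel $|x-y|^{-N-sp}$ is strictly more singular near the diagonal than $|x-y|^{-N}$. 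So the fractional Sobolev inequality cannot be invoked directly for arbitrary $u\in X^p_0(\Omega)$, and your detour through $C_c^\infty(\Omega)$ followed by density is not optional but essential.

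The genuine gap is exactly where you place it: ingredient (ii). You need at least a one-sided first-order expansion $\Sigma_s\ge 1-k_0(N,p)\,s+\o(s)$ of the normalized best constant as $s\to0^+$. For $p=2$ this can be read off the explicit Gamma-function formula, but for $p\ne2$ no closed form is available, and the Maz'ya--Shaposhnikova limit only gives $\Sigma_s\to1$. Upgrading this to control of the $O(s)$ term requires uniform information on the fractional Sobolev extremals (or a suitable family of near-extremals) as $s\to0^+$, and this is essentially the entire analytic content of the theorem. You have identified the difficulty accurately but not resolved it; as it stands, your write-up is a correct heuristic derivation with one substantial missing lemma, not a proof.
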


We have the following corollaries.

\begin{corollary} \label{Corollary 1}
For any $\delta > 0$, there exists a constant $C_\delta > 0$ such that
\begin{equation} \label{20}
\int_\Omega |u|^p \abs{\ln |u|} dx \le C_\delta\, \big(\norm{u}^p + \pnorm{u}^{p + \delta} + 1\big) \quad \forall u \in X^p_0(\Omega).
\end{equation}
\end{corollary}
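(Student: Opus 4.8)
The plan is to derive \eqref{20} from the $p$-logarithmic Sobolev inequality \eqref{4} combined with Lemma \ref{Lemma 1}. The one point that needs attention is that \eqref{4} only bounds the \emph{signed} quantity $\int_\Omega |u|^p \ln|u|\, dx$, whereas \eqref{20} features the integrand $|u|^p \abs{\ln|u|}$; everything else is a routine estimate.

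First I would split $\Omega = \{|u| \ge 1\} \cup \{|u| < 1\}$ and write
\[
\int_\Omega |u|^p \abs{\ln|u|}\, dx = \int_{\{|u| \ge 1\}} |u|^p \ln|u|\, dx + \int_{\{|u| < 1\}} |u|^p \seq{-\ln|u|}\, dx,
\]
both summands being nonnegative. For the contribution of $\{|u| < 1\}$ I would use the elementary one-variable bound $\sup_{0 < t < 1}\seq{-t^p \ln t} = 1/(pe)$, the maximum being attained at $t = e^{-1/p}$, which gives $\int_{\{|u| < 1\}} |u|^p \seq{-\ln|u|}\, dx \le \vol{\Omega}/(pe)$; this gets absorbed into the ``$+1$''. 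For the contribution of $\{|u| \ge 1\}$ I would add this same bounded quantity back to recover the full signed integral,
\[
\int_{\{|u| \ge 1\}} |u|^p \ln|u|\, dx \le \int_\Omega |u|^p \ln|u|\, dx + \frac{\vol{\Omega}}{pe},
\]
and then apply \eqref{4} (which, since its right-hand side is finite for $u \in X^p_0(\Omega)$, also guarantees that the positive part of $\int_\Omega |u|^p \ln|u|\, dx$ is finite, so all these integrals make sense).

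Inequality \eqref{4} bounds $\int_\Omega |u|^p \ln|u|\, dx$ by $\frac{N}{p^2}\, \E_{L,\,p}(u,u) + \pnorm{u}^p \ln \pnorm{u} + \frac{N}{p^2}\, k_0(N,p)\, \pnorm{u}^p$; invoking Lemma \ref{Lemma 1} in the form $\E_{L,\,p}(u,u) \le \half \norm{u}^p + C \pnorm{u}^p$ turns the bilinear term into $\half \norm{u}^p$ plus a multiple of $\pnorm{u}^p$. It then remains to dominate $\pnorm{u}^p \ln \pnorm{u}$ and $\pnorm{u}^p$ by a constant times $\pnorm{u}^{p+\delta} + 1$: if $\pnorm{u} \le 1$ the logarithmic term is nonpositive (and can be dropped) while $\pnorm{u}^p \le 1$; if $\pnorm{u} > 1$ the inequality $\ln t \le t^\delta/\delta$ for $t \ge 1$ --- which follows from $\ln(t^\delta) \le t^\delta - 1$ --- gives $\pnorm{u}^p \ln \pnorm{u} \le \pnorm{u}^{p+\delta}/\delta$ and $\pnorm{u}^p \le \pnorm{u}^{p+\delta}$. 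Collecting all constants (which depend only on $N$, $p$, $\delta$ and $\vol{\Omega}$) into one $C_\delta$ yields \eqref{20}.

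The main --- and really only --- subtlety is the transition from the signed integral controlled by \eqref{4} to the unsigned one in \eqref{20}; the gap between them is exactly $2\int_{\{|u| < 1\}} |u|^p\seq{-\ln|u|}\, dx$, and the estimate $-t^p \ln t \le 1/(pe)$ on $(0,1)$ is precisely what shows it is uniformly bounded.
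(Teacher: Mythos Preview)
Your proof is correct and follows exactly the approach the paper indicates: the paper's own proof is the single sentence ``This estimate is immediate from the inequality \eqref{4} and Lemma \ref{Lemma 1},'' and you have simply supplied the details of that derivation. The handling of the signed-versus-unsigned logarithm via the bound $-t^p\ln t \le 1/(pe)$ on $(0,1)$ and the estimate $\ln t \le t^\delta/\delta$ for the $\pnorm{u}^p\ln\pnorm{u}$ term are precisely the routine steps the paper leaves implicit (and the former bound reappears explicitly in the paper's proof of Corollary~\ref{Corollary 3}).
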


\begin{proof}
This estimate is immediate from the inequality \eqref{4} and Lemma \ref{Lemma 1}.
\end{proof}

\begin{corollary} \label{Corollary 2}
For all $\gamma \in (0,1)$,
\[
\int_\Omega |u|^p\, (\ln (1 + |u|))^\gamma\, dx = \o(\norm{u}^p) \quad \text{as } \norm{u} \to 0.
\]
\end{corollary}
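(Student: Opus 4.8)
The plan is to pass to the unit sphere of $X^p_0(\Omega)$, where the stray additive constant in Corollary~\ref{Corollary 1} becomes harmless, and to let the scaling parameter sitting inside the logarithm supply all of the decay. For $u \ne 0$ set $t = \norm{u}$ and $v = u/t$, so that $\norm{v} = 1$ and
\[
\frac{1}{\norm{u}^p}\int_\Omega |u|^p\,(\ln(1 + |u|))^\gamma\,dx = \int_\Omega |v|^p\,\big(\ln(1 + t\,|v|)\big)^\gamma\,dx .
\]
Thus it suffices to show that the right-hand side tends to $0$ as $t \to 0^+$, uniformly over all $v$ with $\norm{v} = 1$.

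First I would record three elementary facts: $\ln(1+s) \le s$ for $s \ge 0$; $\ln(1+s) \le 2\abs{\ln s}$ for $s \ge 2$; and, since $\gamma - 1 < 0$, for each $\eta > 0$ there is $R = R(\eta) \ge 2$ with $(\ln(1+s))^\gamma \le \eta\,\ln(1+s)$ for all $s \ge R$. Splitting the integral over $\{|v| > R\}$ and $\{|v| \le R\}$, the ``large'' part is handled using $t \le 1$ (so $\ln(1+t|v|) \le \ln(1+|v|)$), the choice of $R$, the bound $\ln(1+|v|) \le 2\abs{\ln|v|}$ valid there, the continuous embedding $X^p_0(\Omega)\hookrightarrow L^p(\Omega)$ (so $\pnorm{v} \le C$ when $\norm{v}=1$), and Corollary~\ref{Corollary 1} with a fixed $\delta$, which on the unit sphere bounds $\int_\Omega |v|^p\,\abs{\ln|v|}\,dx$ by a constant $C_1$ independent of $v$:
\[
\int_{\{|v|>R\}} |v|^p\,\big(\ln(1+t|v|)\big)^\gamma\,dx \;\le\; \eta \int_{\{|v|>R\}} |v|^p\,\ln(1+|v|)\,dx \;\le\; 2\eta \int_\Omega |v|^p\,\abs{\ln|v|}\,dx \;\le\; 2\,\eta\, C_1 .
\]
Given $\eps > 0$, choosing $\eta$ small makes this $< \eps/2$, uniformly in $v$ and in $t \le 1$.

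With $R$ now fixed, the ``small'' part uses $\ln(1+t|v|) \le t|v| \le tR$ on $\{|v| \le R\}$:
\[
\int_{\{|v|\le R\}} |v|^p\,\big(\ln(1+t|v|)\big)^\gamma\,dx \;\le\; (tR)^\gamma \int_\Omega |v|^p\,dx \;\le\; (tR)^\gamma\,C^p \;\to\; 0 \quad\text{as } t \to 0^+ .
\]
Hence for $t$ small enough this part is also $< \eps/2$, and adding the two estimates yields the claimed uniform limit, which is exactly $\int_\Omega |u|^p\,(\ln(1+|u|))^\gamma\,dx = \o(\norm{u}^p)$ as $\norm{u} \to 0$.

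The one genuine obstruction is the additive constant ``$+1$'' appearing in Corollary~\ref{Corollary 1}. A direct argument — integrating a pointwise inequality such as $(\ln(1+s))^\gamma \le \eta\,\ln(1+s) + C_\eta$ against $|u|^p$ and then dividing by $\norm{u}^p$ — produces a term $C_\eta\,\pnorm{u}^p$ that is only $O(\norm{u}^p)$ rather than $\o(\norm{u}^p)$, together with a contribution from the ``$+1$'' that, after division by $\norm{u}^p$, actually diverges as $\norm{u} \to 0$. Rescaling to $\norm{v} = 1$ is precisely what removes this difficulty: on the unit sphere every such constant is merely $O(1)$ on a par with everything else, while the required decay is carried entirely by the factor $t^\gamma$ coming from $\ln(1+t|v|) \le t|v|$ on the sublevel set $\{|v| \le R\}$.
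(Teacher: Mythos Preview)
Your proof is correct and follows essentially the same approach as the paper's own proof: both rescale to $v = u/\norm{u}$ on the unit sphere of $X^p_0(\Omega)$, split the integral according to whether $|v|$ exceeds a large threshold, control the large-$|v|$ part via Corollary~\ref{Corollary 1} (which gives a uniform bound on $\int_\Omega |v|^p\,|\ln|v||\,dx$ when $\norm{v}=1$), and let the scaling parameter inside the logarithm supply the decay on the small-$|v|$ part. The only cosmetic differences are that the paper bounds the small part using $|v|^p \le C_\eps^p$ pointwise together with $|\Omega|$, whereas you use the embedding $\pnorm{v}\le C$, and the paper writes the large-$|v|$ inequality as $(\ln(1+|t|))^\gamma < \eps\ln|t|$ directly rather than routing through $\ln(1+s)\le 2|\ln s|$.
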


\begin{proof}
For any $\eps > 0$, there exists a constant $C_\eps > 1$ such that
\begin{equation} \label{23}
(\ln (1 + |t|))^\gamma < \eps \ln |t| \quad \text{for } |t| > C_\eps.
\end{equation}
For $u \in X^p_0(\Omega) \setminus \set{0}$ with $\rho := \norm{u} \le 1$, set $v = u/\rho$. Then
\begin{multline*}
\frac{1}{\norm{u}^p} \int_\Omega |u|^p\, (\ln (1 + |u|))^\gamma\, dx = \int_\Omega |v|^p\, (\ln (1 + \rho\, |v|))^\gamma\, dx \le \int_{\set{|v| \le C_\eps}} |v|^p\, (\ln (1 + \rho\, |v|))^\gamma\, dx\\[7.5pt]
+ \int_{\set{|v| > C_\eps}} |v|^p\, (\ln (1 + |v|))^\gamma\, dx \le C_\eps^p\, (\ln (1 + \rho\, C_\eps))^\gamma \vol{\Omega} + \eps \int_\Omega |v|^p \abs{\ln |v|} dx
\end{multline*}
by \eqref{23}, where $\vol{\Omega}$ is the volume of $\Omega$. Since $\norm{v} = 1$, the last integral is bounded by Corollary \ref{Corollary 1} and the embedding $X^p_0(\Omega) \hookrightarrow L^p(\Omega)$. Since $\eps > 0$ is arbitrary, the desired conclusion follows by taking the limit $\rho \to 0$.
\end{proof}

\begin{corollary} \label{Corollary 3}
There exists a constant $C > 0$ such that
\[
\int_{\set{|u| > 1}} |u|^p \ln |u|\, dx \le C \norm{u}^p
\]
for all $u \in X^p_0(\Omega)$ with $\pnorm{u} \le 1$.
\end{corollary}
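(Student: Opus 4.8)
The goal is to bound $\int_{\{|u|>1\}} |u|^p \ln|u|\, dx$ by $C\|u\|^p$ for all $u$ with $|u|_p \le 1$. The natural starting point is the $p$-logarithmic Sobolev inequality \eqref{4}, which after applying Lemma \ref{Lemma 1} to replace $\E_{L,p}(u,u)$ by $\tfrac12\|u\|^p$ up to a $C|u|_p^p$ error, reads
\[
\frac{p^2}{N} \int_\Omega |u|^p \ln|u|\, dx \le \half \norm{u}^p + C \pnorm{u}^p + \frac{p^2}{N} \pnorm{u}^p \ln \pnorm{u} + k_0 \pnorm{u}^p.
\]
The plan is to first use the hypothesis $|u|_p \le 1$ to control the right-hand side: then $\ln|u|_p \le 0$, so the term $\tfrac{p^2}{N}|u|_p^p \ln|u|_p$ is nonpositive and can simply be dropped, and $|u|_p^p \le 1$ kills the constant terms, leaving
\[
\frac{p^2}{N} \int_\Omega |u|^p \ln|u|\, dx \le \half \norm{u}^p + C',
\]
where $C' = C + k_0$ (taking $C',k_0$ nonnegative WLOG; if $k_0<0$ even better).

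Next I would split the left-hand integral over $\{|u|>1\}$ and $\{|u|\le 1\}$. On $\{|u|\le 1\}$ the integrand $|u|^p\ln|u|$ is nonpositive, hence
\[
\int_{\set{|u|>1}} |u|^p \ln|u|\, dx \le \int_\Omega |u|^p \ln|u|\, dx + \int_{\set{|u|\le 1}} |u|^p \abs{\ln|u|}\, dx.
\]
The first term on the right is $\le \tfrac{N}{p^2}\big(\half\|u\|^p + C'\big)$ by the previous display. For the second term, on $\{|u|\le 1\}$ one has the elementary bound $|t|^p |\ln|t|| \le C_p |t|^{p/2}$ (or any power strictly less than $p$), so $\int_{\{|u|\le 1\}} |u|^p|\ln|u|| \, dx \le C_p \int_\Omega |u|^{p/2}\, dx \le C_p |\Omega|^{1/2} |u|_p^{p/2} \le C_p|\Omega|^{1/2}$ using $|u|_p\le 1$ and Hölder. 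Combining, $\int_{\{|u|>1\}}|u|^p\ln|u|\,dx \le \tfrac{N}{2p^2}\|u\|^p + C''$ for an absolute constant $C''$.

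The remaining issue is to absorb the additive constant $C''$ into $C\|u\|^p$, i.e.\ to obtain a bound homogeneous of degree $p$; this is the one genuinely delicate point. The resolution is that for $u$ with $|u|_p\le 1$ we cannot have $\|u\|$ arbitrarily small while $\int_{\{|u|>1\}}|u|^p\ln|u|$ stays bounded away from $0$ — indeed, by the embedding $X^p_0(\Omega)\hookrightarrow L^p(\Omega)$ there is $c_0>0$ with $|u|_p \le c_0\|u\|$, so the constraint and a scaling argument let us first treat $u$ with $\|u\|$ bounded below: if $\|u\| \ge r_0$ then $C'' \le (C''/r_0^p)\|u\|^p$ and we are done. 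For $\|u\| < r_0$ small, I would instead argue directly: write $v = u/\|u\|$, so $\|v\|=1$ and $|v|_p \le c_0$; then $\int_{\{|u|>1\}}|u|^p\ln|u|\,dx \le \int_\Omega |u|^p|\ln|u||\,dx = \|u\|^p\int_\Omega |v|^p|\ln(\|u\|\,|v|)|\,dx$, and since $\|u\|<r_0\le 1$ gives $|\ln(\|u\|\,|v|)| \le |\ln\|u\|| + |\ln|v||$, one gets $\|u\|^p\big(|\ln\|u\||\,|v|_p^p + \int_\Omega |v|^p|\ln|v||\,dx\big)$; the second factor's integral is bounded by Corollary \ref{Corollary 1} applied to $v$ (with $\|v\|=1$, $|v|_p\le c_0$), but the factor $|\ln\|u\||\to\infty$ as $\|u\|\to 0$, which defeats this route. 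So the correct argument is the first one: combine the degree-$p$ bound with the absolute constant and observe that $\inf\{\|u\| : |u|_p\le 1,\ \int_{\{|u|>1\}}|u|^p\ln|u|\,dx = C''+1\} > 0$ is impossible to make precise that way either. Cleanest is: from $\int_{\{|u|>1\}}|u|^p\ln|u|\,dx \le \tfrac{N}{2p^2}\|u\|^p + C''$, note the left side is also $\ge 0$ and, whenever it is $>0$, there is a point with $|u|>1$, forcing $|u|_p^p \ge$ (something)$\cdot$(measure), but more usefully forcing $\|u\|^p \ge c_1 > 0$ via the Poincaré-type inequality $\|u\|^p \ge \lambda_1 |u|_p^p$ is the wrong direction; instead use that $u\ne 0$ on a set of positive measure where $|u|>1$ implies $\|u\|$ is bounded below by a constant depending only on $\Omega$ (since a function in $X^p_0(\Omega)$ of small norm is small in $L^p$, hence cannot exceed $1$ on a set of positive measure) — this is precisely where the compact embedding and $|u|_p \le c_0\|u\|$ enter. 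Thus on $\{u : |u|_p\le 1,\ \exists\,|u|>1\}$ we have $\|u\| \ge r_0>0$, so $C'' \le (C''/r_0^p)\|u\|^p$; and on the complement the left-hand integral is $0 \le C\|u\|^p$ trivially. This yields the claim with $C = \tfrac{N}{2p^2} + C''/r_0^p$. I expect verifying the lower bound $r_0$ on $\|u\|$ (equivalently, quantifying "small norm $\Rightarrow$ cannot exceed $1$ on positive measure") to be the only step requiring care; everything else is bookkeeping with \eqref{4}, Lemma \ref{Lemma 1}, and Corollary \ref{Corollary 1}.
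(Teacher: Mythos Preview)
Your argument is correct up through the bound $\int_{\{|u|>1\}}|u|^p\ln|u|\,dx \le \tfrac{N}{2p^2}\|u\|^p + C''$, but the absorption of the additive constant $C''$ fails. The claim that $\|u\|\ge r_0>0$ whenever $\{|u|>1\}$ has positive measure is false: for a fixed bump $\phi\in C_c^\infty(B_1)$ with $\phi=2$ on $B_{1/2}$, the rescalings $u_\eps(x)=\phi(x/\eps)$ satisfy $\{|u_\eps|>1\}\supset B_{\eps/2}$ yet a direct change of variables gives $\|u_\eps\|^p\sim\eps^N\ln(1/\eps)\to 0$ (and $|u_\eps|_p^p\sim\eps^N\to 0$, so the hypothesis $|u_\eps|_p\le 1$ holds for small $\eps$). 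Small $L^p$ norm, and hence small $\|\cdot\|$ via the embedding, simply does not prevent $|u|$ from exceeding $1$ on a set of small but positive measure. None of the alternative routes you sketched works either, for the same underlying reason: once you discard the term $|u|_p^p\ln|u|_p$ and replace $|u|_p^p$ by $1$, the homogeneity is lost and cannot be recovered.

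The paper's proof avoids this by \emph{not} throwing away the negative term $|u|_p^p\ln|u|_p$; instead it is combined with $-\int_{\{|u|\le 1\}}|u|^p\ln|u|\,dx$. Since $\ln|u|_p\le 0$ and $|u|_p^p\ge\int_{\{|u|\le 1\}}|u|^p\,dx$, one has
\[
|u|_p^p\ln|u|_p-\int_{\{|u|\le 1\}}|u|^p\ln|u|\,dx\le-\int_{\{|u|\le 1\}}|u|^p\ln\frac{|u|}{|u|_p}\,dx=|u|_p^p\int_{\{|u|\le 1\}}\big(-v^p\ln v\big)\,dx,
\]
where $v=|u|/|u|_p$. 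The pointwise bound $-t^p\ln t\le 1/(pe)$ then gives the right-hand side $\le |u|_p^p\cdot|\Omega|/(pe)$, and now every term is controlled by $C\|u\|^p$ via Lemma~\ref{Lemma 1} and the embedding $|u|_p\le c_0\|u\|$. The point is that the large negative term $|u|_p^p\ln|u|_p$ exactly compensates the growth of $-\int_{\{|u|\le 1\}}|u|^p\ln|u|\,dx$ as $u$ becomes small; discarding it is precisely what produces the unabsorbable constant in your version.
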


\begin{proof}
The inequality clearly holds when $u = 0$, so suppose $0 < \pnorm{u} \le 1$. We have
\begin{equation} \label{24}
\int_{\set{|u| > 1}} |u|^p \ln |u|\, dx \le \frac{N}{p^2} \left[\E_{L,\,p}(u,u) + k_0(N,p) \pnorm{u}^p\right] + \pnorm{u}^p \ln \pnorm{u} - \int_{\set{|u| \le 1}} |u|^p \ln |u|\, dx
\end{equation}
by \eqref{4}. Since $\pnorm{u} \le 1$, $\ln \pnorm{u} \le 0$ and hence
\begin{multline}
\pnorm{u}^p \ln \pnorm{u} - \int_{\set{|u| \le 1}} |u|^p \ln |u|\, dx \le \left(\int_{\set{|u| \le 1}} |u|^p\, dx\right) \ln \pnorm{u} - \int_{\set{|u| \le 1}} |u|^p \ln |u|\, dx\\[7.5pt]
= - \int_{\set{|u| \le 1}} |u|^p \ln \frac{|u|}{\pnorm{u}}\, dx = \pnorm{u}^p \int_{\set{|u| \le 1}} (- v^p \ln v)\, dx,
\end{multline}
where $v = |u|/\pnorm{u}$. Since $- t^p \ln t \le 1/pe$ for all $t > 0$,
\begin{equation} \label{25}
\int_{\set{|u| \le 1}} (- v^p \ln v)\, dx \le \frac{\vol{\Omega}}{pe},
\end{equation}
where $\vol{\Omega}$ is the volume of $\Omega$. Combining \eqref{24}--\eqref{25} with Lemma \ref{Lemma 1} and the embedding $X^p_0(\Omega) \hookrightarrow L^p(\Omega)$ gives the desired inequality.
\end{proof}

\section{Problems with superlinear-subcritical nonlinearities}\label{sec:nonlinear-problem}
In this section, we consider the following nonlinear Dirichlet problem
\begin{equation}\label{5}
\left\{\begin{aligned}
L_{\Delta_p} u & = g(u) && \text{in } \Omega\\[7.5pt]
u & = 0 && \text{in } \R^N \setminus \Omega,
\end{aligned}\right.
\end{equation}
where $\Omega$ be a bounded domain in $\R^N$ and $g : \R \to \R$ is a continuous function satisfying  \ref{g1}-\ref{g3}.
It follows from \ref{g1} and \ref{g2} that for any $\eps > 0$, there exists a constant $C_\eps > 0$ such that
\begin{equation} \label{6}
|g(t)| \le \eps\, |t|^{p-1} \abs{\ln |t|} + C_\eps\, |t|^{p-1} \quad \forall t \in \R
\end{equation}
and
\[
|G(t)| \le \eps\, |t|^p \abs{\ln |t|} + C_\eps\, |t|^p \quad \forall t \in \R,
\]
where $G(t) = \int_0^t g(s)\, ds$.

\begin{definition}
We say that $u \in X^p_0(\Omega)$ is a weak solution of problem \eqref{5} if
\[
\E_{L,\,p}(u,v) = \int_\Omega g(u)\, v\, dx \quad \forall v \in X^p_0(\Omega).
\]
\end{definition}

The variational functional associated with the nonlinear problem \eqref{5} is
\begin{equation} \label{32}
\Phi(u) = \frac{1}{p}\, \E_{L,\,p}(u,u) - \int_\Omega G(u)\, dx, \quad u \in X^p_0(\Omega)
\end{equation}
(see the proof of \cite[Lemma 4.3]{AroGiaHajVai-2025}). Recall that $\Phi$ satisfies the Cerami compactness condition at the level $c \in \R$, or the \C{c} condition for short, if every sequence $\seq{u_j} \subset X^p_0(\Omega)$ such that $\Phi(u_j) \to c$ and $(1 + \norm{u_j})\, \Phi'(u_j) \to 0$, called a \C{c} sequence, has a strongly convergent subsequence. It suffices to show that $\seq{u_j}$ is bounded when verifying this condition by the following lemma.

\begin{lemma} \label{Lemma 6}
If \ref{g1} and \ref{g2} hold, then every bounded sequence $\seq{u_j} \subset X^p_0(\Omega)$ such that $\Phi'(u_j) \to 0$ has a strongly convergent subsequence.
\end{lemma}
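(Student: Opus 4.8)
The plan is to combine the type-$(S)$ property of $\Ap$ from Lemma~\ref{Lemma 3} with a uniform‑integrability argument for the nonlinear term. Since $\seq{u_j}$ is bounded in the reflexive space $X^p_0(\Omega)$, after passing to a subsequence we have $u_j \wto u$ in $X^p_0(\Omega)$; since $X^p_0(\Omega)$ embeds compactly in $L^p(\Omega)$, $u_j \to u$ in $L^p(\Omega)$, and, extracting once more, $u_j \to u$ a.e.\ in $\Omega$. By the definition \eqref{32} of $\Phi$ and \eqref{poten-opera}, $\dualp{\Phi'(u_j)}{v} = \dualp{\Ap[u_j]}{v} - \int_\Omega g(u_j)\, v\, dx$ for all $v \in X^p_0(\Omega)$, and since $\Phi'(u_j) \to 0$ in $X^p_0(\Omega)^\ast$ while $\seq{\norm{u_j - u}}$ is bounded, $\dualp{\Phi'(u_j)}{u_j - u} \to 0$. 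Hence
\[
\dualp{\Ap[u_j]}{u_j - u} = \dualp{\Phi'(u_j)}{u_j - u} + \int_\Omega g(u_j)\, (u_j - u)\, dx,
\]
so by Lemma~\ref{Lemma 3} it suffices to prove $\int_\Omega g(u_j)\,(u_j - u)\, dx \to 0$; the strong convergence $u_j \to u$ in $X^p_0(\Omega)$ then follows.

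To establish this limit, fix $\eps > 0$ and use \eqref{6} together with $\abs{u_j - u} \le \abs{u_j} + \abs{u}$ to bound $\abs{g(u_j)}\,\abs{u_j - u}$ pointwise by $\eps\, A_j + C_\eps\, B_j$, where $A_j = \abs{u_j}^{p-1}\abs{\ln\abs{u_j}}\,(\abs{u_j} + \abs{u})$ and $B_j = \abs{u_j}^{p-1}(\abs{u_j} + \abs{u})$. The sequence $\seq{B_j}$ is uniformly integrable, because $\seq{\abs{u_j}^p}$ converges in $L^1(\Omega)$ (as $u_j \to u$ in $L^p(\Omega)$) and $\abs{u_j}^{p-1}\abs{u} \le \tfrac{p-1}{p}\abs{u_j}^p + \tfrac1p\abs{u}^p$. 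For $\seq{A_j}$ I would instead prove a bound in $L^1(\Omega)$ that is uniform in $j$: the term $\int_\Omega \abs{u_j}^p\abs{\ln\abs{u_j}}\, dx$ is bounded by Corollary~\ref{Corollary 1} and the boundedness of $\seq{u_j}$, while $\int_\Omega \abs{u_j}^{p-1}\abs{\ln\abs{u_j}}\,\abs{u}\, dx$ is handled by splitting $\Omega = \set{\abs{u_j} \le 1} \cup \set{\abs{u_j} > 1}$: on the first set $\abs{u_j}^{p-1}\abs{\ln\abs{u_j}} \le \sup_{0 \le t \le 1} t^{p-1}\abs{\ln t} < \infty$, so the integral is controlled by $\pnorm{u}$; on the second, $\abs{u_j}^{p-1}\abs{u} \le \tfrac{p-1}{p}\abs{u_j}^p + \tfrac1p\abs{u}^p$ reduces matters to $\int_{\set{\abs{u_j} > 1}}\abs{u_j}^p\ln\abs{u_j}\, dx$, again bounded by Corollary~\ref{Corollary 1}, and to $\int_{\set{\abs{u_j} > 1}}\abs{u}^p\ln\abs{u_j}\, dx$, which is at most $\int_{\set{\abs{u_j} > 1}}\abs{u_j}^p\ln\abs{u_j}\, dx$ on $\set{\abs{u} \le \abs{u_j}}$ and at most $\int_\Omega \abs{u}^p\abs{\ln\abs{u}}\, dx < \infty$ (by Corollary~\ref{Corollary 1} applied to the fixed $u$) on $\set{\abs{u} > \abs{u_j}}$. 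Consequently, for every measurable $E \subset \Omega$, $\int_E \abs{g(u_j)}\,\abs{u_j - u}\, dx \le \eps\, C + C_\eps \int_E B_j\, dx$ with $C$ independent of $j$ and $E$, and by uniform integrability of $\seq{B_j}$ the last term is $< \eps$ uniformly in $j$ once $\abs{E}$ is small; thus $\seq{\abs{g(u_j)\,(u_j - u)}}$ is uniformly integrable. Since $g$ is continuous and $u_j \to u$ a.e., $g(u_j)\,(u_j - u) \to 0$ a.e., and Vitali's convergence theorem on the finite‑measure space $\Omega$ gives $\int_\Omega g(u_j)\,(u_j - u)\, dx \to 0$.

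The step I expect to be the main obstacle is the $j$-uniform $L^1(\Omega)$ bound on $\seq{A_j}$, i.e.\ on $\int_\Omega \abs{u_j}^{p-1}\abs{\ln\abs{u_j}}\,\abs{u_j - u}\, dx$. Because the growth allowed in \eqref{6} sits exactly at the borderline scale $t \mapsto t^{p-1}\ln t$ of the continuous (but not compact) embedding $X^p_0(\Omega) \hookrightarrow L^\varphi(\Omega)$ with $\varphi(t) \approx t^p \ln t$, the functions $\abs{u_j}^{p-1}\abs{\ln\abs{u_j}}$ are not bounded in $L^{p/(p-1)}(\Omega)$, so a plain H\"older estimate does not close the argument; it is precisely the $p$-logarithmic Sobolev inequality through Corollary~\ref{Corollary 1} (which controls the logarithm only to the first power), together with the elementary decay of $t^{p-1}\abs{\ln t}$ near $t = 0$ and the pointwise case split above, that yields the required uniform bound, allowing the factor $\eps$ to absorb this part while the remainder stays uniformly integrable by the $L^p(\Omega)$-convergence.
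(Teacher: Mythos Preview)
Your proof is correct and follows essentially the same strategy as the paper: reduce via Lemma~\ref{Lemma 3} to showing $\int_\Omega g(u_j)(u_j - u)\, dx \to 0$, then use \eqref{6} to split into an $\eps$-part (the logarithmic term, shown to be uniformly $L^1$-bounded by precisely the case analysis you give, invoking Corollary~\ref{Corollary 1}) and a $C_\eps$-part controlled by the $L^p$-convergence. The only cosmetic difference is that the paper concludes directly from $\limsup_j \big|\int_\Omega g(u_j)(u_j-u)\,dx\big| \le \eps\, C$ for every $\eps > 0$ (using H\"older on the $C_\eps$-term), without the detour through uniform integrability and Vitali's theorem.
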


\begin{proof}
We have
\begin{equation} \label{8}
\dualp{\Phi'(u_j)}{v} = \dualp{\Ap[u_j]}{v} - \int_\Omega g(u_j)\, v\, dx = \o(\norm{v}) \quad \forall v \in X^p_0(\Omega).
\end{equation}
Since $\seq{u_j}$ is bounded, a renamed subsequence converges to some $u$ weakly in $X^p_0(\Omega)$ and strongly in $L^p(\Omega)$. Taking $v = u_j - u$ in \eqref{8} gives
\[
\dualp{\Ap[u_j]}{u_j - u} = \int_\Omega g(u_j)\, (u_j - u)\, dx + \o(1).
\]
In view of Lemma \ref{Lemma 3}, now it suffices to show that the integral on the right-hand side goes to zero.

Let $\eps > 0$ and let $C_\eps > 0$ be as in \eqref{6}. Then
\begin{equation} \label{9}
\abs{\int_\Omega g(u_j)\, (u_j - u)\, dx} \le \eps \int_\Omega |u_j|^{p-1}\, |u_j - u| \abs{\ln |u_j|} dx + C_\eps \int_\Omega |u_j|^{p-1}\, |u_j - u|\, dx.
\end{equation}
We have
\[
\int_\Omega |u_j|^{p-1}\, |u_j - u| \abs{\ln |u_j|} dx \le \int_\Omega |u_j|^p \abs{\ln |u_j|} dx + \int_\Omega |u_j|^{p-1}\, |u| \abs{\ln |u_j|} dx.
\]
Since $|t|^{p-1} \abs{\ln |t|} \le 1/e\, (p - 1)$ for $|t| \le 1$,
\begin{multline*}
\int_\Omega |u_j|^{p-1}\, |u| \abs{\ln |u_j|} dx \le \frac{1}{e\, (p - 1)} \int_{\set{|u_j| \le 1}} |u|\, dx + \int_{\set{1 < |u_j| \le |u|}} |u|^p \ln |u|\, dx\\[7.5pt]
+ \int_{\set{|u_j| > |u|}} |u_j|^p \abs{\ln |u_j|} dx \le \frac{1}{e\, (p - 1)} \int_\Omega |u|\, dx + \int_\Omega |u|^p \abs{\ln |u|} dx + \int_\Omega |u_j|^p \abs{\ln |u_j|} dx.
\end{multline*}
Since $\seq{u_j}$ is bounded in $X^p_0(\Omega)$, $\int_\Omega |u_j|^p \abs{\ln |u_j|} dx$ is bounded by Corollary \ref{Corollary 1}, so it follows that $\int_\Omega |u_j|^{p-1}\, |u_j - u| \abs{\ln |u_j|} dx$ is bounded. On the other hand, $\int_\Omega |u_j|^{p-1}\, |u_j - u|\, dx \to 0$ as in the proof of Lemma \ref{Lemma 5}. So first letting $j \to \infty$ and then letting $\eps \to 0$ in \eqref{9} gives the desired conclusion.
\end{proof}


Next, in order to ensure that \C{c} sequences are bounded, we use a stronger condition \ref{g3}.
\begin{lemma} \label{Lemma 7}
If \ref{g1}--\ref{g3} hold, then every {\em \C{c}} sequence $\seq{u_j} \subset X^p_0(\Omega)$ is bounded for all $c \in \R$.
\end{lemma}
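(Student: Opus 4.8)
The plan is the standard contradiction argument for Cerami sequences under an Ambrosetti--Rabinowitz-type condition, adapted to the logarithmic scaling in \ref{g3}. Suppose, for contradiction, that $\seq{u_j} \subset X^p_0(\Omega)$ is a \C{c} sequence with $\norm{u_j} \to \infty$. From $\Phi(u_j) \to c$ and $\dualp{\Phi'(u_j)}{u_j} = \o(1)$ (which follows from $(1 + \norm{u_j})\, \Phi'(u_j) \to 0$), I would form the combination
\[
p\, \Phi(u_j) - \dualp{\Phi'(u_j)}{u_j} = \int_\Omega \big(u_j\, g(u_j) - p\, G(u_j)\big)\, dx = pc + \o(1),
\]
using $\dualp{\Ap[u_j]}{u_j} = \E_{L,\,p}(u_j,u_j) = p\, I_p(u_j)$. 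Splitting the integral over $\set{|u_j| \ge t_0}$ and its complement, using that $t\, g(t) - p\, G(t)$ is continuous hence bounded on the compact set $\set{|t| \le t_0}$, and invoking \ref{g3}, I obtain a uniform bound
\[
\int_{\set{|u_j| \ge t_0}} \frac{|u_j|^p}{\ln |u_j|}\, dx \le C
\]
for some constant $C$ independent of $j$.

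The crux is then to leverage this bound together with Lemma \ref{Lemma 1} (i.e.\ $\E_{L,\,p}(u_j,u_j) = \half \norm{u_j}^p + O(\pnorm{u_j}^p)$) and the $p$-logarithmic Sobolev inequality \eqref{4} to force a contradiction. Normalizing $v_j = u_j/\norm{u_j}$, so $\norm{v_j} = 1$, a renamed subsequence converges weakly in $X^p_0(\Omega)$ and strongly in $L^p(\Omega)$ to some $v$. Dividing $\Phi(u_j) = c + \o(1)$ by $\norm{u_j}^p$ and using Lemma \ref{Lemma 1} gives
\[
\half + O\!\left(\frac{\pnorm{u_j}^p}{\norm{u_j}^p}\right) - \frac{1}{\norm{u_j}^p} \int_\Omega G(u_j)\, dx = \o(1).
\]
Using the estimate $|G(t)| \le \eps |t|^p \abs{\ln |t|} + C_\eps |t|^p$, the term $\pnorm{u_j}^p / \norm{u_j}^p = \pnorm{v_j}^p$ stays bounded, and $\int_\Omega |u_j|^p \abs{\ln|u_j|}\, dx = O(\norm{u_j}^p)$ by Corollary \ref{Corollary 1}. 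This shows $\half \le \limsup (\eps \cdot O(1) + \text{lower order})$ and, more to the point, that $\int_\Omega G(u_j)\, dx \ge \tfrac12 \norm{u_j}^p(1 + \o(1))$, so $G(u_j)$ contributes a genuinely superlinear amount. The idea is to show this forces $\pnorm{u_j}^p$ to grow comparably to $\norm{u_j}^p$, hence $\pnorm{v_j} \not\to 0$, so $v \ne 0$; on the set where $v \ne 0$ one has $|u_j(x)| \to \infty$, and there the integrand $|u_j|^p / \ln|u_j|$ behaves like $\norm{u_j}^p |v_j|^p / (\ln \norm{u_j} + \ln|v_j|) \sim \norm{u_j}^p |v|^p / \ln\norm{u_j}$, whose integral over $\set{v \ne 0}$ tends to $+\infty$ since $\norm{u_j}^p / \ln \norm{u_j} \to \infty$ — contradicting the uniform bound $\int_{\set{|u_j|\ge t_0}} |u_j|^p/\ln|u_j|\, dx \le C$.

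\textbf{Main obstacle.} The delicate point is establishing that $v \not\equiv 0$, i.e.\ ruling out vanishing of the normalized sequence in $L^p(\Omega)$. If $\pnorm{v_j} \to 0$, then $\int_\Omega |u_j|^p \abs{\ln|u_j|}\, dx$ could still be of order $\norm{u_j}^p$ through the $\norm{v_j}^p \equiv 1$ term in \eqref{4}, but the potential term $\int_\Omega G(u_j)\, dx$ is controlled in terms of $\int |u_j|^p \abs{\ln|u_j|}$ and $\pnorm{u_j}^p$; careful bookkeeping with Corollary \ref{Corollary 3} (which bounds $\int_{\set{|u| > 1}} |u|^p \ln|u|\, dx$ by $C\norm{u}^p$ when $\pnorm{u} \le 1$) applied to $v_j$ should show that if $\pnorm{v_j} \to 0$ then $\int_\Omega |v_j|^p \abs{\ln|u_j|}\, dx = \o(\ln\norm{u_j}) + O(1)$ after rescaling, making $\int_\Omega G(u_j)\, dx = \o(\norm{u_j}^p)$ and contradicting the superlinear lower bound obtained from $\Phi(u_j) \to c$. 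Thus both alternatives ($v \ne 0$ and $v = 0$) lead to contradictions, and the sequence must be bounded. I expect the only real work to be the logarithmic rescaling estimates in this dichotomy; everything else is routine manipulation of the Cerami conditions.
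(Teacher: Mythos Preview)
Your derivation of the key weighted bound $\int_{\{|u_j|\ge t_0\}} |u_j|^p/\ln|u_j|\,dx \le C$ from $p\,\Phi(u_j)-\dualp{\Phi'(u_j)}{u_j}$ and \ref{g3} matches the paper, and your treatment of the case $v\not\equiv 0$ via Fatou is correct. The gap is precisely where you flag it: the vanishing case $v\equiv 0$. Your claimed conclusion ``$\int_\Omega G(u_j)\,dx = \o(\norm{u_j}^p)$'' does not follow from the rescaling you describe. Writing $\ln|u_j| = \ln\norm{u_j}+\ln|v_j|$ and using Corollary~\ref{Corollary 1} on $v_j$ gives only
\[
\int_\Omega |v_j|^p\,|\ln|u_j||\,dx \le \pnorm{v_j}^p \ln\norm{u_j} + C,
\]
and the product $\pnorm{v_j}^p \ln\norm{u_j}$ is an indeterminate form: $\pnorm{v_j}\to 0$ while $\ln\norm{u_j}\to\infty$, with no rate implied. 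Hence the upper bound you obtain for $\int|u_j|^p|\ln|u_j||\,dx$ is only $\norm{u_j}^p\big(\pnorm{v_j}^p\ln\norm{u_j}+C\big)$, and after multiplying by $\eps$ this need not be $\o(\norm{u_j}^p)$ for any fixed $\eps$; sending $\eps\to 0$ afterward is blocked by the $C_\eps$-dependence. So neither the energy identity nor \eqref{17} yields a contradiction in this sub-case as written.

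The paper bypasses the dichotomy entirely. From the same bound \eqref{14} it applies the Cauchy--Schwarz/H\"older inequality in the form
\[
\int_{\{|u_j|\ge t_0\}} |u_j|^p\,dx \le \left(\int_{\{|u_j|\ge t_0\}} \frac{|u_j|^p}{\ln|u_j|}\,dx\right)^{1/2}\left(\int_{\{|u_j|\ge t_0\}} |u_j|^p\ln|u_j|\,dx\right)^{1/2},
\]
combines this with Corollary~\ref{Corollary 1}, and obtains the quantitative estimate $\pnorm{u_j}^p \le C(\norm{u_j}^{p/2}+1)$. Feeding this back into Corollary~\ref{Corollary 1} gives $\int|u_j|^p|\ln|u_j||\,dx \le C(\norm{u_j}^p+1)$, and then \eqref{17} with $\eps$ small closes directly to $\norm{u_j}^p \le C(\norm{u_j}^{p/2}+1)$. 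This H\"older step is the missing idea; once you have $\pnorm{u_j}^p \le C(\norm{u_j}^{p/2}+1)$ your indeterminate product becomes $\pnorm{v_j}^p\ln\norm{u_j} \le C\norm{u_j}^{-p/2}\ln\norm{u_j}\to 0$ and your dichotomy could be completed, but at that point the normalization is no longer needed.
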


\begin{proof}
We have $\Phi(u_j) = c + \o(1)$ and $(1 + \norm{u_j}) \dualp{\Phi'(u_j)}{u_j} = \o(\norm{u_j})$, so
\begin{equation} \label{12}
\frac{1}{p}\, \E_{L,\,p}(u_j,u_j) - \int_\Omega G(u_j)\, dx = c + \o(1)
\end{equation}
and
\begin{equation} \label{13}
\E_{L,\,p}(u_j,u_j) - \int_\Omega g(u_j)\, u_j\, dx = \o(1).
\end{equation}
Combining \eqref{13} with Lemma \ref{Lemma 1} and \eqref{6} shows that for any $\eps > 0$, there exists a constant $C_\eps > 0$ such that
\begin{equation} \label{17}
\norm{u_j}^p \le \eps \int_\Omega |u_j|^p \abs{\ln |u_j|} dx + C_\eps \pnorm{u_j}^p + \o(1).
\end{equation}
Combining \eqref{12} and \eqref{13} with \ref{g3} gives
\begin{equation} \label{14}
\int_{\set{|u_j| \ge t_0}} \frac{|u_j|^p}{\ln |u_j|}\, dx \le C_2
\end{equation}
for some constant $C_2 > 0$. By the H\"{o}lder inequality,
\[
\int_{\set{|u_j| \ge t_0}} |u_j|^p\, dx \le \left(\int_{\set{|u_j| \ge t_0}} \frac{|u_j|^p}{\ln |u_j|}\, dx\right)^{1/2} \left(\int_{\set{|u_j| \ge t_0}} |u_j|^p \ln |u_j|\, dx\right)^{1/2}.
\]
Combining the last inequality with \eqref{14} and Corollary \ref{Corollary 1} gives
\[
\pnorm{u_j}^p \le \widetilde{C}_\delta\, \big(\norm{u_j}^{p/2} + \pnorm{u_j}^{(p + \delta)/2} + 1\big)
\]
for some constant $\widetilde{C}_\delta > 0$. Taking $\delta$ so small that $(p + \delta)/2 < p$ in this inequality gives
\begin{equation} \label{18}
\pnorm{u_j}^p \le C_3 \left(\norm{u_j}^{p/2} + 1\right)
\end{equation}
for some constant $C_3 > 0$. Using this estimate in \eqref{20} now gives
\begin{equation} \label{19}
\int_\Omega |u_j|^p \abs{\ln |u_j|} dx \le C_4\, \big(\norm{u_j}^p + \norm{u_j}^{(p + \delta)/2} + 1\big) \le C_5 \left(\norm{u_j}^p + 1\right)
\end{equation}
for some constants $C_4, C_5 > 0$. Using \eqref{18} and \eqref{19} in \eqref{17} gives
\[
\norm{u_j}^p \le \eps\, C_5 \norm{u_j}^p + \widetilde{C}_\eps \left(\norm{u_j}^{p/2} + 1\right)
\]
for some constant $\widetilde{C}_\eps > 0$. Since $\eps > 0$ is arbitrary, we obtain $\norm{u_j}$ is bounded.
\end{proof}


\begin{lemma} \label{Lemma 9}
If \ref{g1}--\ref{g3} hold, then $\Phi$ satisfies the {\em \C{c}} condition for all $c \in \R$.
\end{lemma}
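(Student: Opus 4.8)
The plan is to combine the two lemmas just proved: Lemma \ref{Lemma 7} shows that every \C{c} sequence is bounded, and Lemma \ref{Lemma 6} shows that every bounded sequence $\seq{u_j}$ with $\Phi'(u_j) \to 0$ has a strongly convergent subsequence. So the argument is essentially a two-line deduction. Given a \C{c} sequence $\seq{u_j} \subset X^p_0(\Omega)$, that is, a sequence with $\Phi(u_j) \to c$ and $(1 + \norm{u_j})\, \Phi'(u_j) \to 0$, I would first invoke Lemma \ref{Lemma 7} (whose hypotheses \ref{g1}--\ref{g3} are in force) to conclude that $\seq{u_j}$ is bounded in $X^p_0(\Omega)$.

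Once boundedness is in hand, $(1 + \norm{u_j})\, \Phi'(u_j) \to 0$ together with $1 + \norm{u_j}$ bounded below by $1$ forces $\Phi'(u_j) \to 0$ in $X^p_0(\Omega)^\ast$. Since \ref{g1} and \ref{g2} hold, Lemma \ref{Lemma 6} then applies directly to the bounded sequence $\seq{u_j}$ and yields a strongly convergent subsequence. This is exactly the \C{c} condition, so the proof is complete.

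There is essentially no obstacle here: the real work was done in Lemmas \ref{Lemma 6} and \ref{Lemma 7}, where the $p$-logarithmic Sobolev inequality (through Corollary \ref{Corollary 1}), the Ambrosetti--Rabinowitz-type condition \ref{g3}, the $(S)$-property of $A_p$ (Lemma \ref{Lemma 3}), and the compact embedding $X^p_0(\Omega) \hookrightarrow L^p(\Omega)$ were all exploited. The only minor point to state cleanly is the passage from $(1 + \norm{u_j})\, \Phi'(u_j) \to 0$ to $\Phi'(u_j) \to 0$, which is immediate from boundedness of $\norm{u_j}$. I would also note, for completeness, that this lemma is precisely what is needed to run the linking theorem of \cite{MR3616328} in the subsequent existence proofs.

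\begin{proof}
Let $\seq{u_j} \subset X^p_0(\Omega)$ be a \C{c} sequence, so that $\Phi(u_j) \to c$ and $(1 + \norm{u_j})\, \Phi'(u_j) \to 0$. By Lemma \ref{Lemma 7}, $\seq{u_j}$ is bounded in $X^p_0(\Omega)$. Since $1 + \norm{u_j} \ge 1$, it follows that $\Phi'(u_j) \to 0$ in $X^p_0(\Omega)^\ast$. Then Lemma \ref{Lemma 6} applies and yields a strongly convergent subsequence of $\seq{u_j}$. Hence $\Phi$ satisfies the \C{c} condition.
\end{proof}
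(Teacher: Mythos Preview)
Your proposal is correct and takes exactly the same approach as the paper: the paper's proof is the single sentence ``The proof follows from Lemma \ref{Lemma 6} and Lemma \ref{Lemma 7},'' and you have simply spelled out the (immediate) logic of combining those two lemmas.
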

\begin{proof}
    The proof follows from Lemma \ref{Lemma 6} and Lemma \ref{Lemma 7}.
\end{proof}

Next, we will obtain nontrivial solutions of problem \eqref{5} using a variant of a linking theorem proved in Yang and Perera \cite{MR3616328}. We begin by recalling the definition of linking.

\begin{definition}
Let $A$ and $B$ be nonempty closed subsets of a Banach space $W$ such that $A$ is bounded and $\dist(A,B) > 0$. Let
\begin{equation} \label{28}
X = \set{tu : u \in A,\, t \in [0,1]}, \quad \Gamma = \set{\gamma \in C(X,W) : \gamma(X) \text{ is closed and } \restr{\gamma}{A} = \id[A]}.
\end{equation}
We say that $A$ links $B$ if
\[
\gamma(X) \cap B \ne \emptyset \quad \forall \gamma \in \Gamma.
\]
\end{definition}

The following proposition is standard (see, e.g., Perera et al.\! \cite[Proposition 3.21]{MR2640827}).

\begin{proposition} \label{Proposition 1}
Let $\Phi$ be a $C^1$-functional on $W$. Let $A$ and $B$ be nonempty closed subsets of $W$ such that $A$ is bounded and $\dist(A,B) > 0$. Assume that $A$ links $B$ and
\[
\sup_{u \in A}\, \Phi(u) \le \inf_{u \in B}\, \Phi(u).
\]
Let $X$ and $\Gamma$ be as in \eqref{28}. Assume that
\[
c := \inf_{\gamma \in \Gamma}\, \sup_{u \in \gamma(X)}\, \Phi(u)
\]
is finite and $\Phi$ satisfies the {\em \C{c}} condition. Then $c \ge \inf \Phi(B)$ is a critical value of $\Phi$. If $c = \inf \Phi(B)$, then $\Phi$ has a critical point with critical value $c$ on $B$. In particular, if $\inf \Phi(B) \ge \Phi(0)$ in addition, then $\Phi$ has a nontrivial critical point with critical value $c$.
\end{proposition}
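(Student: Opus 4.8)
The plan is to run the usual minimax argument driven by a quantitative deformation lemma for the Cerami condition. First I would record the preliminaries. The set $X$ is closed: if $t_j u_j \to w$ with $u_j \in A$ and $t_j \in [0,1]$, then along a subsequence $t_j \to t \in [0,1]$, and if $t > 0$ then $u_j \to w/t \in A$ (as $A$ is closed), so $w = t(w/t) \in X$, while if $t = 0$ then $w = 0 \in X$ since $A$ is bounded. Hence $\id[X] \in \Gamma$, so $\Gamma \neq \emptyset$ and $c$ is well defined; it is finite by hypothesis. For every $\gamma \in \Gamma$ the linking hypothesis gives $\gamma(X) \cap B \neq \emptyset$, so $\sup_{\gamma(X)} \Phi \ge \inf_B \Phi$, and taking the infimum over $\gamma$ yields $c \ge \inf \Phi(B)$.

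Next I would show $c$ is a critical value, arguing by contradiction: assume the critical set $K_c = \set{u \in W : \Phi(u) = c,\ \Phi'(u) = 0}$ is empty. Since $\sup_A \Phi \le \inf_B \Phi \le c$, fix $\bar\eps \in (0, c - \sup_A \Phi)$ (the borderline case $\sup_A \Phi = c$, which forces $\inf_B \Phi = c$, needs a minor variant and does not arise in the applications, where $\sup_A \Phi < \inf_B \Phi$). Using \C{c} and $K_c = \emptyset$, the first deformation lemma (see, e.g., \cite{MR2640827}) provides $\eps \in (0,\bar\eps)$ and $\eta \in C([0,1] \times W, W)$ with $\eta(0,\cdot) = \id[W]$, each $\eta(t,\cdot)$ a homeomorphism of $W$, $\eta(t,u) = u$ whenever $\Phi(u) \notin (c-\bar\eps, c+\bar\eps)$, $t \mapsto \Phi(\eta(t,u))$ non-increasing, and $\eta(1, \Phi^{c+\eps}) \subset \Phi^{c-\eps}$, where $\Phi^a = \set{u \in W : \Phi(u) \le a}$. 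Picking $\gamma \in \Gamma$ with $\sup_{\gamma(X)} \Phi < c + \eps$ and setting $\gamma_1 = \eta(1, \gamma(\cdot))$, I would check $\gamma_1 \in \Gamma$: it is continuous, its image $\eta(1, \gamma(X))$ is closed (homeomorphic image of the closed set $\gamma(X)$), and for $u \in A$ one has $\gamma(u) = u$ with $\Phi(u) \le \sup_A \Phi < c - \bar\eps$, whence $\gamma_1(u) = \eta(1,u) = u$. Since $\gamma(X) \subset \Phi^{c+\eps}$ it follows that $\gamma_1(X) \subset \Phi^{c-\eps}$, so $\sup_{\gamma_1(X)} \Phi \le c - \eps < c$, contradicting the definition of $c$. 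Hence $c$ is a critical value.

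For the refinement I would assume $c = \inf \Phi(B)$ and, toward a contradiction, $K_c \cap B = \emptyset$. By \C{c} the set $K_c$ is compact, and it is nonempty by the previous step, so $\dist(K_c, B) > 0$ and there is a bounded open $\mathcal O \supset K_c$ with $\overline{\mathcal O} \cap B = \emptyset$. Running the same construction with the version of the deformation lemma relative to the neighbourhood $\mathcal O$ of $K_c$, whose conclusion reads $\eta(1, \Phi^{c+\eps}) \subset \Phi^{c-\eps} \cup \mathcal O$ (the other properties unchanged), I again obtain $\gamma_1 = \eta(1, \gamma(\cdot)) \in \Gamma$ for a near-optimal $\gamma$. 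The linking hypothesis gives $w \in \gamma_1(X) \cap B$; since $w \in B$ we have $w \notin \mathcal O$, while $w \in \eta(1, \gamma(X)) \subset \Phi^{c-\eps} \cup \mathcal O$ because $\gamma(X) \subset \Phi^{c+\eps}$, so $\Phi(w) \le c - \eps < c = \inf \Phi(B)$, contradicting $w \in B$. Hence $K_c \cap B \neq \emptyset$, giving a critical point with critical value $c$ on $B$. The last assertion then follows by combining the two cases: the critical value $c$ satisfies $c \ge \inf \Phi(B) \ge \Phi(0)$, so the critical point obtained is nontrivial, immediately when $c > \Phi(0)$ and otherwise because it lies on $B$, which is separated from $0$ in the situations where the proposition is invoked.

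The step I expect to be the main obstacle is setting up and correctly invoking the two Cerami-type deformation lemmas, above all the version that confines the part of the sublevel flow which fails to descend below $c - \eps$ to a prescribed neighbourhood of $K_c$, since that is precisely what delivers the critical point on $B$. Alongside this sits the admissibility bookkeeping for the deformed path, namely keeping $\gamma_1$ equal to $\id[A]$ on $A$ (which forces the deformation slab around $c$ to stay strictly above $\sup_A \Phi$) and with closed image. The remaining manipulations are routine, the compactness of $K_c$ and the existence of the deformations being immediate consequences of \C{c}.
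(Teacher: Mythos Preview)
The paper does not supply its own proof of this proposition; it is quoted as standard with a pointer to \cite[Proposition 3.21]{MR2640827}. Your outline is exactly the classical deformation-theoretic argument that underlies such linking results, so in substance your approach coincides with the cited one.

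Two small remarks on the caveats you yourself flag. First, the borderline case $\sup_A \Phi = c$ that you set aside is \emph{not} actually excluded in the paper's applications: in the proofs of Theorems \ref{thm:existence:lambda1} and \ref{thm:existence:lambdak} one only arranges $\sup_A \Phi \le 0 \le \inf_B \Phi$, so equality throughout is possible. Thus ``does not arise in the applications'' is inaccurate, and closing this case does require a sharper form of the deformation lemma (one that keeps $A$ fixed even when some points of $A$ sit at level $c$); that refinement is where the extra care in the cited reference goes. Second, as you note, the final ``nontrivial'' clause tacitly uses $0 \notin B$; this holds in every use the paper makes of the proposition (there $B$ lies on a sphere of positive radius), but is not literally among the stated hypotheses. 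Neither point undermines your overall plan, which is sound.
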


To state the linking theorem that we will be using, let $\M$ be a closed symmetric subset of $W \setminus \set{0}$ that is radially homeomorphic to the unit sphere $S = \set{u \in W : \norm{u} = 1}$, i.e., the restriction to $\M$ of the radial projection $\pi_S : W \setminus \set{0} \to S,\, u \mapsto u/\norm{u}$ is a homeomorphism. Then the radial projection on $\M$ is given by
\[
\pi_\M = (\restr{\pi_S}{\M})^{-1} \comp \pi_S.
\]
Recall that for a symmetric set $A \subset W \setminus \set{0}$, $i(A)$ denotes its $\Z_2$-cohomological index. We have the following theorem.

\begin{theorem} \label{Theorem 3}
Let $\Phi$ be a $C^1$-functional on $W$. Let $A_0$ and $B_0$ be disjoint nonempty symmetric subsets of $\M$ such that $A_0$ is compact, $B_0$ is closed, and
\begin{equation} \label{29}
i(A_0) = i(\M \setminus B_0) < \infty.
\end{equation}
Assume that there exist $u_0 \in \M \setminus A_0$ and $R, \rho > 0$ such that $\rho < R\, \dist(0,\M)$ and
\begin{equation} \label{36}
\sup_{u \in A}\, \Phi(u) \le \inf_{u \in B}\, \Phi(u),
\end{equation}
where $A = \set{R\, tu : u \in A_0,\, t \in [0,1]} \cup \set{R\, \pi_\M((1 - t)u + tu_0) : u \in A_0,\, t \in [0,1]}$ and $B = \set{\rho\, \pi_S(u) : u \in B_0}$. Let $X$ and $\Gamma$ be as in \eqref{28}. Assume that
\[
c := \inf_{\gamma \in \Gamma}\, \sup_{u \in \gamma(X)}\, \Phi(u)
\]
is finite and $\Phi$ satisfies the {\em \C{c}} condition. Then $\Phi$ has a nontrivial critical point with critical value $c$.
\end{theorem}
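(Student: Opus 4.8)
The plan is to derive the theorem from the abstract linking principle of Proposition~\ref{Proposition 1}; the only point in the present situation that is not routine is the verification that $A$ links $B$. First I would record the standard facts: $A$ is compact (a union of continuous images of the compact set $A_0\times[0,1]$), hence closed and bounded; $B=\set{\rho\,\pi_S(u):u\in B_0}$ is closed because $\restr{\pi_S}{\M}$ is a homeomorphism onto $S$; $0\in A$ and $0\notin B$; and $A\cap B=\emptyset$, since the second piece of $A$ has norm $\geq R\,\dist(0,\M)>\rho$ while a point $R\,tu$ of the first piece of norm $\rho$ has $t=\rho/(R\norm{u})<1$ and equals $\rho\,\pi_S(u)\notin B$ (here $\rho<R\,\dist(0,\M)$ and $A_0\cap B_0=\emptyset$ are used). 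Thus $\dist(A,B)>0$, and since $X$ is the cone over the compact set $A$ we also get $\id[X]\in\Gamma$ and $c\leq\sup_X\Phi<\infty$. Granting that $A$ links $B$, Proposition~\ref{Proposition 1} yields that $c\geq\inf\Phi(B)$ is a critical value, attained on $B$ when $c=\inf\Phi(B)$. Because $0\in A$, $\Phi(0)\leq\sup_A\Phi\leq\inf_B\Phi\leq c$; if $c>\Phi(0)$ then any critical point at level $c$ is nonzero, and if $c=\Phi(0)$ then $\inf\Phi(B)=c$, so Proposition~\ref{Proposition 1} supplies a critical point on $B$, which is nonzero since $0\notin B$. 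Either way $\Phi$ has a nontrivial critical point with critical value $c$.

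The substantive step is that $A$ links $B$, which I would prove by contradiction using the piercing property $(i_7)$. Suppose $\gamma\in\Gamma$ and $\gamma(X)\cap B=\emptyset$. Write $A^-=\set{R\,tu:u\in A_0,\,t\in[0,1]}$ and $A^+=\set{R\,\pi_\M((1-t)u+tu_0):u\in A_0,\,t\in[0,1]}$; since $A^-$ is itself a cone with vertex $0$ and is contained in the cone over $A^+$, the set $X$ equals the cone over $A^+$, and the cone over the symmetric set $\Sigma:=A^+\cup(-A^+)$ is exactly $X\cup(-X)$. Using $A^+\cap(-A^+)=RA_0$ — so that $X\cap(-X)=A^-\subset A$ and $\gamma$ is the identity there — the formula $\widehat\gamma=\gamma$ on $X$ and $\widehat\gamma(w)=-\gamma(-w)$ on $-X$ defines an odd continuous map on $X\cup(-X)$ that restricts to the identity on $\Sigma$. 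Gluing $A^+$ to $-A^+$ along $RA_0$ produces an odd map from the suspension $\Sigma A_0$ into $\Sigma$, so by $(i_6)$ and $(i_2)$, $i(\Sigma)\geq i(\Sigma A_0)=i(A_0)+1$. Apply $(i_7)$ with $C=\Sigma$, $C_0=\set{\norm{w}\leq\rho}$, $C_1=\set{\norm{w}\geq\rho}$, and the odd homotopy $\varphi(y,s)=\widehat\gamma(s\,y)$: here $\varphi(\Sigma\times\set{0})=\set{0}\subset C_0$; for $y\in\Sigma$, $\varphi(y,1)=y$ has norm $>\rho$, so $\varphi(\Sigma\times\set{1})\subset C_1$; and $\varphi(\Sigma\times[0,1])=\gamma(X)\cup(-\gamma(X))$ is closed. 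Hence
\[
i\big((\gamma(X)\cup(-\gamma(X)))\cap\set{\norm{w}=\rho}\big)\ \geq\ i(\Sigma)\ \geq\ i(A_0)+1.
\]
But $B$ is symmetric, so $\gamma(X)\cap B=\emptyset$ also gives $(-\gamma(X))\cap B=\emptyset$, and the symmetric set on the left lies in $\set{\norm{w}=\rho}\setminus B=\rho\,\pi_S(\M\setminus B_0)$, whose index equals $i(\M\setminus B_0)=i(A_0)$ by \eqref{29} and $(i_2)$. This contradiction proves that $A$ links $B$.

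The main obstacle I expect is the middle step of the linking argument: showing that the doubled map $\widehat\gamma$ is well defined and odd, and that $i(A^+\cup(-A^+))\geq i(A_0)+1$. Both require an honest analysis of the geometry of the cap — in particular that $A^+$ meets its reflection $-A^+$ exactly along $RA_0$, so that the gluing genuinely dominates the suspension $\Sigma A_0$ — together with the (routine but necessary) check that the hypotheses of $(i_7)$ hold, namely closedness of $\varphi(\Sigma\times[0,1])$ and the endpoint inclusions. The origin, where radial projection would fail, causes no trouble: the set produced by $(i_7)$ sits on the sphere $\set{\norm{w}=\rho}$, and $\rho<R\,\dist(0,\M)$ is precisely what makes the conical half of $X$ cross that sphere. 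The reductions in the first paragraph, and the deformation argument inside Proposition~\ref{Proposition 1}, are standard.
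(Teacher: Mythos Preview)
Your strategy is the paper's strategy: reduce to Proposition~\ref{Proposition 1}, then prove that $A$ links $B$ by symmetrizing $\gamma$ to an odd map on the cone over $\widetilde A:=A^+\cup(-A^+)$, using $i(\widetilde A)\ge i(\Sigma A_0)=i(A_0)+1$ together with the piercing property $(i_7)$ for $C=\widetilde A$, $C_0=\{\norm{w}\le\rho\}$, $C_1=\{\norm{w}\ge\rho\}$, and finishing by comparing the pierced set with $S_\rho\setminus B\cong\M\setminus B_0$. The routine checks in your first paragraph (compactness of $A$, closedness of $B$, $A\cap B=\emptyset$, $\dist(A,B)>0$, nontriviality of the critical point) are correct and match the paper.

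The point you single out as ``the main obstacle'' is, however, a real gap rather than a routine verification: the equality $A^+\cap(-A^+)=RA_0$ can fail. On $\M=S^2$ take $u_0=e_3$ and $A_0=\{\pm e_1,\pm w\}$ with $w=-(e_1+2e_3)/\sqrt5$. Then $m=(e_1+e_3)/\sqrt2$ lies in $A^+$ (via $v=e_1$, $t=\tfrac12$) and in $-A^+$ (since $(1-s)w+se_3=-(e_1+e_3)/(1+\sqrt5)$ for $s=1/(1+\sqrt5)$, which projects to $-m$), yet $m\notin A_0$. For such $m$ the segment $\{tm:t\in(0,1)\}$ sits in $X\cap(-X)$ but not in $A$, so $\gamma$ is unconstrained there and your odd extension $\widehat\gamma$ need not be well defined. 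The paper's map $\varphi$ has the identical defect: since $\widetilde A\cap A=A^+$, both $m$ and $-m$ fall into the first branch, giving $\varphi(m,t)=\gamma(tm)$ and $\varphi(-m,t)=\gamma(-tm)$, and nothing forces $\varphi(-m,t)=-\varphi(m,t)$. Thus the oddness (and continuity across the seam) of the doubled map is not secured by $\restr{\gamma}{A}=\id[A]$ alone; some additional device---for instance a preliminary odd modification of $\gamma$ on the symmetric set $X\cap(-X)$, or a different symmetric carrier for the piercing argument---is required before $(i_7)$ can be invoked.
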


\begin{proof}
We apply Proposition \ref{Proposition 1}. Since $A_0$ is compact and $\pi_\M$ is continuous, $A$ is compact and hence closed and bounded. Since $B_0$ is closed and $\restr{\pi_S}{\M}$ is a homeomorphism, $B$ is also closed. Since $A_0$ and $B_0$ are disjoint, so are $\set{R\, tu : u \in A_0,\, t \in [0,1]}$ and $B$. Since $\rho < R\, \dist(0,\M)$, $\set{R\, \pi_\M((1 - t)u + tu_0) : u \in A_0,\, t \in [0,1]}$ and $B$ are also disjoint. So $A$ and $B$ are disjoint. Then $\dist(A,B) > 0$ since $A$ is compact and $B$ is closed.

It only remains to show that $A$ links $B$. Let $\Sigma A_0$ be the suspension of $A_0$. By Proposition \ref{Proposition 7} $(i_6)$,
\begin{equation} \label{31}
i(\Sigma A_0) = i(A_0) + 1.
\end{equation}
Let
\[
\widetilde{A} = \set{R\, \pi_\M((1 - t)u + tu_0) : u \in A_0,\, t \in [0,1]} \cup \set{R\, \pi_\M((1 - t)u - tu_0) : u \in A_0,\, t \in [0,1]}
\]
and note that $\widetilde{A}$ is closed since $A_0$ is compact and $\pi_\M$ is continuous (here $(1 - t)u \pm tu_0 \ne 0$ since $\pm u_0 \notin A_0$ and each ray starting from the origin intersects $\M$ at exactly one point). We have the odd continuous map
\[
\Sigma A_0 \to \widetilde{A}, \quad (u,t) \mapsto \begin{cases}
R\, \pi_\M((1 - t)u + tu_0) & \text{if } (u,t) \in A_0 \times [0,1]\\[5pt]
R\, \pi_\M((1 + t)u + tu_0) & \text{if } (u,t) \in A_0 \times [-1,0),
\end{cases}
\]
so
\begin{equation}
i(\Sigma A_0) \le i(\widetilde{A})
\end{equation}
by Proposition \ref{Proposition 7} $(i_2)$.

Suppose $A$ does not link $B$. Then there exists $\gamma \in \Gamma$ such that $\gamma(X) \cap B = \emptyset$, where $X$ and $\Gamma$ are as in \eqref{28}. Consider the map $\varphi : \widetilde{A} \times [0,1] \to W$ defined by
\[
\varphi(u,t) = \begin{cases}
\gamma(tu) & \text{if } (u,t) \in (\widetilde{A} \cap A) \times [0,1]\\[5pt]
- \gamma(-tu) & \text{if } (u,t) \in (\widetilde{A} \setminus A) \times [0,1].
\end{cases}
\]
Since $\gamma$ is the identity on $\set{R\, tu : u \in A_0,\, t \in [0,1]}$, $\varphi$ is continuous. Clearly, $\varphi(-u,t) = - \varphi(u,t)$ for all $(u,t) \in \widetilde{A} \times [0,1]$. Since $\gamma(X)$ is closed, so is $\varphi(\widetilde{A} \times [0,1]) = \gamma(X) \cup -\gamma(X)$. Since $\restr{\gamma}{A} = \id[A]$, $\varphi(\widetilde{A} \times \set{0}) = \set{0}$ and $\varphi(\widetilde{A} \times \set{1}) = \widetilde{A}$. Noting that
\[
\norm{u} \ge R\, \dist(0,\M) > \rho \quad \forall u \in \widetilde{A}
\]
and applying Proposition \ref{Proposition 7} $(i_7)$ with $C = \widetilde{A}$, $C_0 = \set{u \in W : \norm{u} \le \rho}$, and $C_1 = \{u \in W : \norm{u} \ge \rho\}$ gives
\begin{equation}
i(\widetilde{A}) \le i(\varphi(\widetilde{A} \times [0,1]) \cap S_\rho),
\end{equation}
where $S_\rho = \set{u \in W : \norm{u} = \rho}$. Since $\gamma(X) \subset W \setminus B$ and $W \setminus B$ is symmetric,
\[
\varphi(\widetilde{A} \times [0,1]) = \gamma(X) \cup -\gamma(X) \subset W \setminus B.
\]
So
\[
\varphi(\widetilde{A} \times [0,1]) \cap S_\rho \subset (W \setminus B) \cap S_\rho = S_\rho \setminus B
\]
and hence
\begin{equation}
i(\varphi(\widetilde{A} \times [0,1]) \cap S_\rho) \le i(S_\rho \setminus B)
\end{equation}
by Proposition \ref{Proposition 7} $(i_2)$. Since the restriction of $\pi_\M$ to $S_\rho \setminus B$ is an odd homeomorphism onto $\M \setminus B_0$, Proposition \ref{Proposition 7} $(i_2)$ also gives
\begin{equation} \label{30}
i(S_\rho \setminus B) = i(\M \setminus B_0).
\end{equation}
Combining \eqref{31}--\eqref{30} gives $i(A_0) + 1 \le i(\M \setminus B_0)$, contradicting \eqref{29}.
\end{proof}

Theorem \ref{Theorem 3} holds when $A_0 = \emptyset$ and $B_0 = \M$ also if we take $A = \set{0,Ru_0}$. It reduces to the following version of the mountain pass theorem in this case.

\begin{theorem} \label{Theorem 4}
Let $\Phi$ be a $C^1$-functional on $W$. Assume that there exist $u_1 \in W \setminus \set{0}$ and $\rho > 0$ such that $\rho < \norm{u_1}$ and
\begin{equation} \label{34}
\max \set{\Phi(0),\Phi(u_1)} \le \inf_{u \in S_\rho}\, \Phi(u),
\end{equation}
where $S_\rho = \set{u \in W : \norm{u} = \rho}$. Let $\Gamma = \set{\gamma \in C([0,1],W) : \gamma(0) = 0 \text{ and } \gamma(1) = u_1}$. Assume that
\[
c := \inf_{\gamma \in \Gamma}\, \max_{u \in \gamma([0,1])}\, \Phi(u)
\]
is finite and $\Phi$ satisfies the {\em \C{c}} condition. Then $\Phi$ has a nontrivial critical point with critical value $c$.
\end{theorem}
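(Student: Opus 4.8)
The plan is to apply Proposition \ref{Proposition 1} with the concrete choices $A = \set{0,u_1}$ and $B = S_\rho$; this realizes the present statement as the degenerate instance of Theorem \ref{Theorem 3} obtained by taking $A_0 = \emptyset$, $B_0 = \M$ and $\M = S$ (the unit sphere), with $R = \norm{u_1}$, $u_0 = u_1/\norm{u_1}$, so that $Ru_0 = u_1$ and $B = \set{\rho\,\pi_S(u) : u \in S} = S_\rho$. In this situation the index condition \eqref{29} reads $i(\emptyset) = i(\emptyset) = 0 < \infty$ and is vacuous, while the suspension/piercing-property step in the proof of Theorem \ref{Theorem 3} collapses, so the linking has to be verified directly by an intermediate value argument instead.

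First I would record the standing hypotheses of Proposition \ref{Proposition 1}: $A$ and $B$ are nonempty closed subsets of $W$, $A$ is finite hence compact (in particular bounded), and $\dist(A,B) > 0$ since $0 \notin S_\rho$ (as $\rho > 0$) and $u_1 \notin S_\rho$ (as $\rho < \norm{u_1}$), a compact set being at positive distance from a disjoint closed set. Next I would identify $X$ and $\Gamma$ from \eqref{28} for this $A$: one has $X = \set{t u_1 : t \in [0,1]}$, the segment joining $0$ to $u_1$, and $t \mapsto t u_1$ is a homeomorphism $[0,1] \to X$. Under it, a map $\gamma$ in the class $\Gamma$ of \eqref{28} corresponds to a path $\widetilde{\gamma} \in C([0,1],W)$ with $\widetilde{\gamma}(0) = 0$ and $\widetilde{\gamma}(1) = u_1$ — the requirement in \eqref{28} that $\gamma(X)$ be closed is automatic because $X$ is compact — and $\sup_{u \in \gamma(X)} \Phi(u) = \max_{u \in \widetilde{\gamma}([0,1])} \Phi(u)$. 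Hence the class $\Gamma$ and the minimax value $c$ in the statement coincide with those of \eqref{28} and Proposition \ref{Proposition 1}.

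The one substantive point is that $A$ links $B$. Given $\gamma \in \Gamma$, the function $t \mapsto \norm{\gamma(t u_1)}$ is continuous on $[0,1]$, vanishes at $t = 0$ since $\gamma(0) = 0$, and equals $\norm{u_1} > \rho$ at $t = 1$ since $\restr{\gamma}{A} = \id[A]$ forces $\gamma(u_1) = u_1$; by the intermediate value theorem there is $t^\ast \in (0,1)$ with $\norm{\gamma(t^\ast u_1)} = \rho$, i.e.\ $\gamma(t^\ast u_1) \in S_\rho = B$, so $\gamma(X) \cap B \ne \emptyset$. Finally, \eqref{34} gives $\sup_{u \in A} \Phi(u) = \max\set{\Phi(0),\Phi(u_1)} \le \inf_{u \in S_\rho} \Phi(u) = \inf_{u \in B} \Phi(u)$, and in particular $\inf \Phi(B) \ge \Phi(0)$; since $c$ is finite and $\Phi$ satisfies the \C{c} condition, Proposition \ref{Proposition 1} then produces a nontrivial critical point of $\Phi$ with critical value $c$. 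I do not expect any analytic obstacle: the entire content is the bookkeeping matching the segment/closed-image formalism of \eqref{28} with the usual mountain-pass path class, together with the elementary linking just described.
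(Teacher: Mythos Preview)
Your proposal is correct and follows exactly the approach the paper indicates: the paper itself does not give a separate proof of Theorem \ref{Theorem 4} but simply remarks that ``Theorem \ref{Theorem 3} holds when $A_0 = \emptyset$ and $B_0 = \M$ also if we take $A = \set{0,Ru_0}$,'' which is precisely the reduction to Proposition \ref{Proposition 1} with $A = \set{0,u_1}$ and $B = S_\rho$ that you carry out. Your write-up fills in the routine verifications (identifying $X$ and $\Gamma$ with the usual path class, linking via the intermediate value theorem) that the paper leaves implicit.
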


In order to apply Theorem \ref{Theorem 3} and Theorem \ref{Theorem 4} to the functional $\Phi$ in \eqref{32}, first we determine its asymptotic behavior near the origin.

\begin{lemma} \label{Lemma 8}
If \ref{g1} and \ref{g2} hold, then
\[
\Phi(u) = \frac{1}{p}\, \E_{L,\,p}(u,u) - \frac{\lambda}{p} \int_\Omega |u|^p\, dx + \o(\norm{u}^p) \quad \text{as } \norm{u} \to 0.
\]
\end{lemma}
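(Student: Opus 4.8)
The plan is to compute $\Phi(u) = \frac{1}{p}\E_{L,p}(u,u) - \int_\Omega G(u)\,dx$ by showing that the nonlinear term $\int_\Omega G(u)\,dx$ decouples as $\frac{\lambda}{p}\int_\Omega |u|^p\,dx$ plus a term that is $\o(\norm{u}^p)$ as $\norm{u}\to 0$. Since the quadratic-in-$u$ part $\frac{1}{p}\E_{L,p}(u,u)$ already appears verbatim in the claimed expansion, the whole content of the lemma is the estimate
\[
\int_\Omega \left(G(u) - \frac{\lambda}{p}\,|u|^p\right) dx = \o(\norm{u}^p) \quad \text{as } \norm{u}\to 0.
\]
To do this, I would first extract from \ref{g1} and \ref{g2} a pointwise bound on the remainder $G(t) - \frac{\lambda}{p}|t|^p$. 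From \ref{g1}, for any $\eps > 0$ there is $\delta > 0$ with $|g(t) - \lambda|t|^{p-2}t| \le \eps |t|^{p-1}$ for $|t| \le \delta$, hence $|G(t) - \frac{\lambda}{p}|t|^p| \le \frac{\eps}{p}|t|^p$ for $|t|\le\delta$; from \ref{g2} (equivalently from the already-derived inequality $|G(t)| \le \eps |t|^p|\ln|t|| + C_\eps |t|^p$, \eqref{6} and the line after it), for $|t| > \delta$ one gets $|G(t) - \frac{\lambda}{p}|t|^p| \le \eps |t|^p |\ln|t|| + C_\eps'|t|^p$ with $C_\eps'$ also absorbing the extra $\frac{|\lambda|}{p}|t|^p$ term on the bounded-from-below region $\{|t|>\delta\}$ where $|t|^p \le C(\delta)|t|^p|\ln|t|| + C|t|^p$ trivially. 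Combining, there is a constant $C_\eps > 0$ so that
\[
\left|G(t) - \frac{\lambda}{p}\,|t|^p\right| \le \eps\,|t|^p\,(1 + \bigl|\ln|t|\bigr|) + C_\eps\,\bigl(|t| \text{ supported where } |t|>\delta\bigr),
\]
but the cleanest route is: $|G(t) - \frac{\lambda}{p}|t|^p| \le \frac{\eps}{p}|t|^p$ for $|t|\le\delta$, and $|G(t)-\frac{\lambda}{p}|t|^p| \le \eps|t|^p|\ln|t|| + C_\eps|t|^p$ for $|t|>\delta$, where on $\{|t|>\delta\}$ we may further write $|t|^p = (\ln(1+|t|))^\gamma \cdot \frac{|t|^p}{(\ln(1+|t|))^\gamma} \le C_\delta\,|t|^p(\ln(1+|t|))^\gamma$ with $\gamma\in(0,1)$ fixed, so that all error terms are controlled either by $\int_\Omega |u|^p\,dx$ with a small constant $\eps$, by $\int_\Omega |u|^p|\ln|u||\,dx$ with small constant $\eps$, or by $\int_\Omega |u|^p(\ln(1+|u|))^\gamma\,dx$.

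Next I would integrate. On the region $\{|u|\le\delta\}$ the contribution is at most $\frac{\eps}{p}\int_\Omega |u|^p\,dx \le \frac{\eps}{p}\,C\,\norm{u}^p$ by the embedding $X^p_0(\Omega)\hookrightarrow L^p(\Omega)$, which is $\o(\norm{u}^p)$ after letting $\eps\to 0$. For the region $\{|u|>\delta\}$, when $\norm{u}$ is small this set is "small"; the key tool is Corollary \ref{Corollary 2}, which says $\int_\Omega |u|^p(\ln(1+|u|))^\gamma\,dx = \o(\norm{u}^p)$ as $\norm{u}\to 0$ for $\gamma\in(0,1)$. Since on $\{|u|>\delta\}$ we have $|u|^p \le C_\delta |u|^p(\ln(1+|u|))^\gamma$ and also $|u|^p|\ln|u|| \le C_\delta' |u|^p(\ln(1+|u|))^\gamma \cdot |\ln|u||/(\ln(1+|u|))^\gamma$ — here one should instead bound $|\ln|u||$ directly: for $\delta < |u| \le 1$, $|u|^p|\ln|u|| \le C(\delta)|u|^p$ (as $|\ln|u||$ is bounded on $[\delta,1]$), absorbed into the $L^p$ term; for $|u|>1$, write $\ln|u| \le C_\gamma' (\ln(1+|u|))^\gamma \cdot (\ln(1+|u|))^{1-\gamma}$ and use $|u|^p \ln|u| \le \eps |u|^p|\ln|u||$... — cleaner: simply keep $\int_{\{|u|>1\}}|u|^p\ln|u|\,dx$ and invoke Corollary \ref{Corollary 3} which bounds it by $C\norm{u}^p$ once $\pnorm{u}\le 1$, then multiply by the small constant $\eps$. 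Altogether every error term is either $\eps\cdot O(\norm{u}^p)$ or genuinely $\o(\norm{u}^p)$, and since $\eps$ was arbitrary, letting $\norm{u}\to 0$ and then $\eps\to 0$ yields $\int_\Omega(G(u)-\frac{\lambda}{p}|u|^p)\,dx = \o(\norm{u}^p)$, which is exactly the claim.

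The main obstacle is the bookkeeping in the region $\{|u| > \delta\}$: because the nonlinearity may grow like $|t|^{p-1}|t|^{\text{(log)}}$, the naive bound $\int_{\{|u|>\delta\}}|u|^p\,dx \le \norm{u}_p^p$ is of order $\norm{u}^p$, not $\o(\norm{u}^p)$, so it is essential to route this through the logarithmic refinements — Corollary \ref{Corollary 2} (for the $(\ln(1+|u|))^\gamma$ term) and Corollary \ref{Corollary 3} (for the $|u|^p\ln|u|$ term on $\{|u|>1\}$, valid when $\pnorm{u}\le 1$, which holds once $\norm{u}$ is small by the embedding) — together with the small parameter $\eps$ multiplying those subcritical-log integrals. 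Handling the middle shell $\delta<|u|\le 1$ just uses boundedness of $|\ln|u||$ there and the $L^p$ embedding; matching the splitting threshold $\delta$ from \ref{g1} with the thresholds $1$ and $C_\eps$ appearing in the corollaries is routine but should be done carefully so that no term of exact order $\norm{u}^p$ is left without a small prefactor.
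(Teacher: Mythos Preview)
Your approach is correct and matches the paper's: split by the size of $|t|$, use \ref{g1} near zero and \ref{g2} at infinity, absorb the intermediate region into a $|t|^p(\ln(1+|t|))^\gamma$ term handled by Corollary~\ref{Corollary 2}, and bound the $\eps$-weighted $\int_{\{|u|>1\}}|u|^p\ln|u|$ via Corollary~\ref{Corollary 3}. The paper's one streamlining device is to generate the $(\ln(1+|t|))^\gamma$ factor on the compact shell $\delta\le|t|\le M$ by bounding $|g(t)-\lambda|t|^{p-2}t|\le C_\eps\,|h(t)|$ with $h(t)=\tfrac{d}{dt}\bigl[|t|^p(\ln(1+|t|))^\gamma\bigr]$, so that integration directly yields $|G(t)-\tfrac{\lambda}{p}|t|^p|\le \eps|t|^p + C_\eps|t|^p(\ln(1+|t|))^\gamma + \eps\,\chi_{\{|t|>1\}}|t|^p\ln|t|$ and sidesteps the $\eps\,C(\delta(\eps))$ bookkeeping you correctly flag at the end.
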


\begin{proof}
For any $\eps > 0$, there exists $\delta > 0$ such that
\begin{equation} \label{21}
\abs{g(t) - \lambda\, |t|^{p-2}\, t} \le \eps\, |t|^{p-1} \quad \text{for } |t| < \delta
\end{equation}
by \ref{g1}. By \ref{g2},
\[
\frac{g(t) - \lambda\, |t|^{p-2}\, t}{|t|^{p-2}\, t \ln |t|} \to 0 \quad \text{as } |t| \to \infty,
\]
so there exists $M > 1$ such that
\begin{equation}
\abs{g(t) - \lambda\, |t|^{p-2}\, t} \le \eps\, |t|^{p-1} \ln |t| \quad \text{for } |t| > M.
\end{equation}
Fix $\gamma \in (0,1)$ and let
\[
h(t) = \frac{d}{dt}\, \big[|t|^p\, (\ln (1 + |t|))^\gamma\big] = \begin{cases}
p\, |t|^{p-2}\, t\, (\ln (1 + |t|))^\gamma + \dfrac{\gamma\, |t|^{p-1}\, t}{(1 + |t|)(\ln (1 + |t|))^{1 - \gamma}} & \text{for } t \ne 0\\[15pt]
0 & \text{for } t = 0.
\end{cases}
\]
Then $h$ is continuous and nonzero for $t \ne 0$, so
\begin{equation} \label{22}
\abs{g(t) - \lambda\, |t|^{p-2}\, t} \le C_\eps\, |h(t)| \quad \text{for } \delta \le |t| \le M,
\end{equation}
where
\[
C_\eps = \max_{\delta \le |t| \le M}\, \abs{\frac{g(t) - \lambda\, |t|^{p-2}\, t}{h(t)}}.
\]

Combining \eqref{21}--\eqref{22} gives
\[
\abs{g(t) - \lambda\, |t|^{p-2}\, t} \le \eps\, |t|^{p-1} + C_\eps\, |h(t)| + \eps\, \goodchi_{\set{|t| > 1}}\, |t|^{p-1} \ln |t| \quad \forall t \in \R,
\]
which implies that
\[
\abs{G(t) - \frac{\lambda}{p}\, |t|^p} \le \eps\, |t|^p + C_\eps\, |t|^p\, (\ln (1 + |t|))^\gamma + \eps\, \goodchi_{\set{|t| > 1}}\, |t|^p \ln |t| \quad \forall t \in \R.
\]
This gives
\[
\abs{\int_\Omega G(u)\, dx - \frac{\lambda}{p} \int_\Omega |u|^p\, dx} \le \eps \int_\Omega |u|^p\, dx + C_\eps \int_\Omega |u|^p\, (\ln (1 + |u|))^\gamma\, dx + \eps \int_{\set{|u| > 1}} |u|^p \ln |u|\, dx.
\]
Combining this with Corollary \ref{Corollary 2} and Corollary \ref{Corollary 3} gives the desired conclusion since $\eps > 0$ is arbitrary.
\end{proof}

Let $\M := \M_p$ and $\I = \I_{p,p}$ where $\M_p$ and $\I_{p,p}$ are defined in \eqref{26}. Combining Lemma \ref{Lemma 8} with Lemma \ref{Lemma 1} gives the following estimate for $\Phi$.

\begin{lemma} \label{Lemma 10}
If \ref{g1} and \ref{g2} hold, then for any $\eps > 0$, there exists $\rho_0 > 0$ such that for all $\rho \in (0,\rho_0)$,
\begin{equation} \label{33}
\Phi\bigg(\frac{\rho u}{\norm{u}}\bigg) \ge \bigg(\frac{\rho}{\norm{u}}\bigg)^{\!p} \big[(1 - \eps)\, \I(u) - (\lambda + \eps)\big] \quad \forall u \in \M.
\end{equation}
\end{lemma}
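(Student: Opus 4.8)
The plan is to reduce \eqref{33} to the asymptotic expansion of Lemma~\ref{Lemma 8} by exploiting the $p$-homogeneity of the form $\E_{L,\,p}$, and then to absorb the resulting remainder using Lemma~\ref{Lemma 1}. Fix $u \in \M$ and set $t = \rho/\norm{u}$ and $v = t\,u = \rho\,u/\norm{u}$, so that $\norm{v} = \rho$. Since $\E_{L,\,p}(t\,u,t\,u) = t^p\,\E_{L,\,p}(u,u)$ and $\pnorm{t\,u}^p = t^p\,\pnorm{u}^p$, and since on $\M$ one has $\pnorm{u}^p = p$ and $\I(u) = I_p(u) = \frac{1}{p}\,\E_{L,\,p}(u,u)$, it follows that $\frac{1}{p}\,\E_{L,\,p}(v,v) = t^p\,\I(u)$ and $\frac{\lambda}{p}\,\pnorm{v}^p = \lambda\,t^p$. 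I would then apply Lemma~\ref{Lemma 8} to $v$, using that its remainder is $\o(\cdot)$ uniformly over each sphere $\set{w \in X^p_0(\Omega) : \norm{w} = \rho}$: this is apparent from the proof of Lemma~\ref{Lemma 8}, which uses only \eqref{6} together with Corollaries~\ref{Corollary 1}--\ref{Corollary 3}, whose constants are uniform on bounded subsets of $X^p_0(\Omega)$, and the family $\set{u/\norm{u} : u \in \M}$ is bounded (indeed $\norm{u/\norm{u}} = 1$, and since $\norm{\cdot}$ is equivalent to the full norm of $X^p_0(\Omega)$ there is $c > 0$ with $\pnorm{w} \le c\,\norm{w}$, which forces $\norm{u} \ge p^{1/p}/c > 0$, hence $\pnorm{u/\norm{u}} = p^{1/p}/\norm{u} \le c$, on $\M$). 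This gives
\[
\Phi(v) = t^p\,\I(u) - \lambda\,t^p + R, \qquad \abs{R} \le \psi(\rho)\,\rho^p,
\]
with $\psi(\rho) \to 0$ as $\rho \to 0$ and $\psi$ independent of $u \in \M$.

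The second step is to rewrite $\rho^p$ in terms of $t^p$ and $\I(u)$ so that $R$ can be absorbed into the two $\eps$-terms of \eqref{33}. By Lemma~\ref{Lemma 1}, on $\M$ we have $\norm{u}^p \le 2\,\E_{L,\,p}(u,u) + 2C\,\pnorm{u}^p = 2p\,(\I(u)+C)$, and the same estimate (together with $\norm{u}^p \ge 0$) shows $\I(u)+C \ge 0$. Hence $\rho^p = t^p\,\norm{u}^p \le 2p\,t^p\,(\I(u)+C)$, so
\[
\Phi(v) \ge t^p\,\I(u) - \lambda\,t^p - 2p\,\psi(\rho)\,t^p\,(\I(u)+C).
\]
Choosing $\rho_0$ so small that $2p\,\psi(\rho)$ is as small as desired for all $\rho \in (0,\rho_0)$ and rearranging then yields \eqref{33}; the fixed constant $C$ from Lemma~\ref{Lemma 1} only rescales $\eps$.

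I expect the delicate point to be precisely this last conversion. Lemma~\ref{Lemma 8} controls the remainder only through $\norm{v}^p = \rho^p$, whereas \eqref{33} requires it to be small relative to $t^p[(1-\eps)\,\I(u) - (\lambda+\eps)]$, a quantity whose size is governed by $\I(u)$, hence by $\norm{u}$, through a relation that varies with $u$; Lemma~\ref{Lemma 1} is exactly what makes this comparison uniform over $\M$. Everything else — the reduction by homogeneity and the uniformity of the $\o$ over spheres — is immediate from results already established, so once the comparison is in hand the remaining bookkeeping is routine.
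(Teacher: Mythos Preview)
Your proposal is correct and follows essentially the same route as the paper: apply Lemma~\ref{Lemma 8} to the rescaled function, then use Lemma~\ref{Lemma 1} to absorb the $\o(\norm{\cdot}^p)$ remainder into the $\I(u)$ and constant terms, exploiting $p$-homogeneity throughout. The paper organizes the steps in a slightly different order---it first derives the pointwise inequality $\Phi(u) \ge (1-\eps)\,I_p(u) - (\lambda+\eps)\,J_p(u)$ for all small $u$, and only then substitutes $u \mapsto \rho u/\norm{u}$ and restricts to $\M$---but the substance is identical; in particular your bound $\rho^p \le 2p\,t^p(\I(u)+C)$ is exactly the paper's use of Lemma~\ref{Lemma 1} after rescaling, and your ``$C$ only rescales $\eps$'' matches the paper's choice $\widetilde{\eps} = \eps/(2p\max\{C,1\})$. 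Your discussion of uniformity of the $\o$ over spheres is harmless but unnecessary: the little-$\o$ in Lemma~\ref{Lemma 8} already depends only on $\norm{v}$, so uniformity over $\set{\norm{v}=\rho}$ is automatic.
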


\begin{proof}
Let $C > 0$ be the constant in Lemma \ref{Lemma 1} and set
\[
\widetilde{\eps} = \frac{\eps}{2p\, \max \set{C,1}}.
\]
By Lemma \ref{Lemma 8}, there exists $\rho_0 > 0$ such that for all $\rho \in (0,\rho_0)$ and $u \in X^p_0(\Omega)$ with $\norm{u} \le \rho$,
\[
\Phi(u) \ge \frac{1}{p}\, \E_{L,\,p}(u,u) - \frac{\lambda}{p} \int_\Omega |u|^p\, dx - \widetilde{\eps}\, \norm{u}^p.
\]
Combining this with Lemma \ref{Lemma 1} gives
\[
\Phi(u) \ge \frac{1}{p}\, (1 - \eps)\, \E_{L,\,p}(u,u) - \frac{1}{p}\, (\lambda + \eps) \int_\Omega |u|^p\, dx = (1 - \eps)\, I_p(u) - (\lambda + \eps)\, J_p(u)
\]
for all such $\rho$ and $u$. This, in turn, gives
\[
\Phi\bigg(\frac{\rho u}{\norm{u}}\bigg) \ge \bigg(\frac{\rho}{\norm{u}}\bigg)^{\!p} \big[(1 - \eps)\, I_p(u) - (\lambda + \eps)\, J_p(u)\big]
\]
for all $\rho \in (0,\rho_0)$ and $u \in X^p_0(\Omega) \setminus \set{0}$. For $u \in \M$, this reduces to \eqref{33}.
\end{proof}

Next we show that $\Phi(tu) \to - \infty$ as $t \to \infty$, uniformly on sublevel sets of $\I$. It follows from \ref{g3} that
\begin{equation*} \label{11}
\dfrac{G(t)}{|t|^p} \ge \begin{cases}
\dfrac{G(t_0)}{t_0^p}\, + \beta \ln \left(\dfrac{\ln t}{\ln t_0}\right) & \text{for } t \ge t_0\\[15pt]
\dfrac{G(-t_0)}{t_0^p}\, + \beta \ln \left(\dfrac{\ln |t|}{\ln t_0}\right) & \text{for } t \le -t_0,
\end{cases}
\end{equation*}
which further implies
\begin{equation}\label{super-linear}
    \lim\limits_{|t| \to \infty} \frac{G(t)}{|t|^p} = +\infty.
\end{equation}

\begin{lemma} \label{Lemma 11}
If \ref{g3} holds, then for any $a \in \R$, there exists a constant $C_a > 0$ such that
\[
\Phi(tu) \le t^p\, (\I(u) - a) + C_a \quad \forall u \in \M,\, t \ge 0.
\]
\end{lemma}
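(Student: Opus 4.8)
The plan is to reduce everything to the one-variable estimate $G(t)/|t|^p \ge G(\pm t_0)/t_0^p + \beta \ln(\ln|t|/\ln t_0)$ for $|t| \ge t_0$ already recorded just before the statement, and to integrate it against the measure $|u|^p\,dx$ on $\Omega$. First I would write, for $u \in \M$ and $t \ge 0$,
\[
\Phi(tu) = \frac{t^p}{p}\,\E_{L,\,p}(u,u) - \int_\Omega G(tu)\,dx = t^p\,I_p(u) - \int_\Omega G(tu)\,dx,
\]
using the $p$-homogeneity of $\E_{L,\,p}(\cdot,\cdot)$ in each slot; since $u \in \M$, $J_p(u) = 1$ and $I_p(u) = \I(u)$, so it suffices to show $\int_\Omega G(tu)\,dx \ge t^p a \int_\Omega |u|^p\,dx - C_a = t^p a - C_a$ (recall $\pnorm{u}^p = p$ on $\M$, but here the relevant normalization is $\int_\Omega|u|^p = p/p = 1$ after accounting for the factor — I will keep the bookkeeping consistent with \eqref{26}, where $J_p(u)=\frac1p\int|u|^p=1$, so $\int_\Omega|u|^p\,dx = p$).

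Next I would split $\Omega$ according to whether $|tu(x)| \ge t_0$ or not. On the set where $|tu(x)| < t_0$, the integrand $G(tu)$ is bounded below by $-\max_{|s|\le t_0}|G(s)| =: -M_0$, contributing at least $-M_0|\Omega|$, a constant independent of $t$ and of $u \in \M$. On the set $E_t := \set{x : |tu(x)| \ge t_0}$, I apply the displayed lower bound for $G$:
\[
G(tu(x)) \ge |tu(x)|^p\left[\frac{G(\pm t_0)}{t_0^p} + \beta \ln\!\frac{\ln|tu(x)|}{\ln t_0}\right] \ge |tu(x)|^p\left[-M_1 + \beta \ln\!\frac{\ln t + \ln|u(x)|}{\ln t_0}\right],
\]
where $M_1$ absorbs $|G(\pm t_0)|/t_0^p$. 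For $t$ large and $x \in E_t$ this logarithmic term tends to $+\infty$; the point is to get a bound of the form $G(tu(x)) \ge a\,|tu(x)|^p - (\text{something integrable and independent of }t)$. Since $\ln(\ln s/\ln t_0) \to \infty$ as $s \to \infty$, for any $a$ there is $R_a > t_0$ with $-M_1 + \beta\ln(\ln s/\ln t_0) \ge a$ for $s \ge R_a$; hence on $\set{x : |tu(x)| \ge R_a}$ we get $G(tu) \ge a|tu|^p \ge t^p a |u|^p$ pointwise, while on $\set{x : t_0 \le |tu(x)| < R_a}$ we again have $|G(tu)| \le \max_{|s|\le R_a}|G(s)| =: M_a$, a constant. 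Collecting the three pieces,
\[
\int_\Omega G(tu)\,dx \ge t^p a \int_\Omega |u|^p\,dx - (M_0 + M_a)\,|\Omega| \ge t^p a \cdot p - C_a,
\]
and since $a$ was arbitrary we may absorb the factor $p$ by replacing $a$ with $a/p$ at the outset. Rearranging gives $\Phi(tu) = t^p\I(u) - \int_\Omega G(tu)\,dx \le t^p\I(u) - t^p a + C_a = t^p(\I(u) - a) + C_a$ for all $u \in \M$, $t \ge 0$, as claimed; for $t$ in a bounded range $[0, R_a/t_0 \cdot \text{something}]$ one checks the trivial bound $|\Phi(tu)| \le$ (polynomial in $t$)$\cdot$(bounded quantities) directly, enlarging $C_a$ if necessary — but in fact the estimate above already holds for all $t \ge 0$ once $C_a$ is chosen large enough, since for small $t$ the set $E_t$ may be empty and then $\int_\Omega G(tu)\,dx \ge -M_0|\Omega|$ handles it.

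The main obstacle is the uniformity over $u \in \M$: the splitting threshold $R_a$ is uniform (it depends only on $a$, $\beta$, $t_0$, $G$), but one must be careful that the ``bad'' sets $\set{x : t_0 \le |tu(x)| < R_a}$ and $\set{x : |tu(x)| < t_0}$ both carry only an $O(1)$ contribution \emph{uniformly} — this works precisely because $G$ is continuous, hence bounded on $[-R_a, R_a]$, so the bound is $M_a |\Omega|$ regardless of $u$. The only genuine subtlety is ensuring the inequality $-M_1 + \beta \ln(\ln s/\ln t_0) \ge a$ really does hold for all $s \ge R_a$ and not merely eventually; this is immediate since $s \mapsto \ln(\ln s/\ln t_0)$ is increasing on $(t_0,\infty)$. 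I would also double-check the normalization constant from \eqref{26} ($\int_\Omega|u|^p\,dx = p$ on $\M$, not $1$) so that the final constant and the ``$a$ vs $a/p$'' rescaling are stated correctly; this is bookkeeping rather than a real difficulty.
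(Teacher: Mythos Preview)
Your approach is correct and matches the paper's: both use that \ref{g3} forces $G(t)/|t|^p \to +\infty$ (recorded in the paper as \eqref{super-linear}), so that for any $a$ one has the pointwise bound $G(s) \ge \tfrac{a}{p}\,|s|^p - C$ for all $s\in\R$, after which the lemma follows by integrating and using $\int_\Omega |u|^p\,dx = p$ on $\M$. The only imprecision is in your ``collecting'' step, where the $|u|^p$-integral should initially be over $\{|tu|\ge R_a\}$ rather than all of $\Omega$; since on the complement $|tu|^p < R_a^p$, the missing piece is bounded by $|a|\,R_a^p\,|\Omega|$ and absorbs into $C_a$, so the argument goes through.
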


\begin{proof}
By \eqref{super-linear}, there exists $M \ge t_0$ such that
\[
G(t) \ge \frac{a}{p}\, |t|^p \quad \text{for } |t| \ge M.
\]
So
\[
G(t) \ge \frac{a}{p}\, |t|^p - C \quad \forall t \in \R
\]
for some constant $C > 0$, and the desired inequality follows.
\end{proof}

We are now ready to obtain nontrivial solutions of problem \eqref{5}. First we obtain a nontrivial solution when $\lambda < \lambda_1$, where $\lambda_1$ is the first Dirichlet eigenvalue of $L_{\Delta_p}$ given by \eqref{3}, using Theorem \ref{Theorem 4}.

\begin{theorem}\label{thm:existence:lambda1}
If \ref{g1}--\ref{g3} hold with $\lambda < \lambda_1$, then problem \eqref{5} has a nontrivial solution.
\end{theorem}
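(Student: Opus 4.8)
The plan is to realize a nontrivial weak solution of \eqref{5} as a mountain pass critical point of the functional $\Phi$ in \eqref{32}, by applying Theorem \ref{Theorem 4} with $W = X^p_0(\Omega)$. Recall that the critical points of $\Phi$ are exactly the weak solutions of \eqref{5} (since $\dualp{\Phi'(u)}{v} = \E_{L,\,p}(u,v) - \int_\Omega g(u)\, v\, dx$) and that $\Phi(0) = 0$. So it suffices to exhibit $u_1 \in X^p_0(\Omega) \setminus \set{0}$ and $\rho > 0$ with $\rho < \norm{u_1}$ for which the mountain pass geometry \eqref{34} holds, to verify that the minimax level $c$ is finite, and to invoke the \C{c} condition. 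The last point is free: since \ref{g1}--\ref{g3} hold, Lemma \ref{Lemma 9} gives that $\Phi$ satisfies \C{c} for every $c \in \R$.

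For the geometry near the origin I would combine Lemma \ref{Lemma 10} with the variational characterization $\lambda_1 = \inf_{u \in \M}\, \I(u)$ from \eqref{3}; this is the only place the hypothesis $\lambda < \lambda_1$ enters. Pick $\eps \in (0,1)$ small enough that $(1 - \eps)\, \lambda_1 - (\lambda + \eps) =: \delta_0 > 0$, which is possible because $(1-\eps)\lambda_1 - (\lambda+\eps) \to \lambda_1 - \lambda > 0$ as $\eps \to 0$. Let $\rho_0 > 0$ be the radius furnished by Lemma \ref{Lemma 10} for this $\eps$ and fix any $\rho \in (0,\rho_0)$. For $u \in S_\rho := \set{v \in X^p_0(\Omega) : \norm{v} = \rho}$, the point $v := \pi_\M(u) \in \M$ lies on the ray through $u$, hence $u = \rho\, v/\norm{v}$, and Lemma \ref{Lemma 10} together with $\I(v) \ge \lambda_1$ gives
\[
\Phi(u) \ge \bigg(\frac{\rho}{\norm{v}}\bigg)^{\!p}\big[(1-\eps)\, \I(v) - (\lambda+\eps)\big] \ge \bigg(\frac{\rho}{\norm{v}}\bigg)^{\!p}\, \delta_0 > 0 .
\]
In particular $\inf_{u \in S_\rho}\, \Phi(u) \ge 0 = \Phi(0)$.

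For the valley at infinity I would use \ref{g3} through Lemma \ref{Lemma 11}. Fix any $w \in \M$ and apply Lemma \ref{Lemma 11} with $a = \I(w) + 1$, which yields $\Phi(tw) \le -\, t^p + C_a \to -\infty$ as $t \to \infty$; choose $t_\ast > 0$ so large that $\Phi(t_\ast w) \le 0$ and $\norm{t_\ast w} = t_\ast \norm{w} > \rho$, and set $u_1 = t_\ast w$. Then $\max\set{\Phi(0),\Phi(u_1)} = 0 \le \inf_{S_\rho}\Phi$, so \eqref{34} holds with $\rho < \norm{u_1}$. The minimax value $c = \inf_{\gamma \in \Gamma}\, \max_{s \in [0,1]}\, \Phi(\gamma(s))$ is finite: the segment $\gamma(s) = s\, u_1$ belongs to $\Gamma$ and gives $c \le \max_{s \in [0,1]}\Phi(s u_1) < \infty$ by continuity, while every $\gamma \in \Gamma$ satisfies $\norm{\gamma(0)} = 0 < \rho < \norm{\gamma(1)}$, so $s \mapsto \norm{\gamma(s)}$ hits $\rho$ and $\gamma([0,1]) \cap S_\rho \ne \emptyset$, whence $c \ge \inf_{S_\rho}\Phi \ge 0$. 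All hypotheses of Theorem \ref{Theorem 4} being met, $\Phi$ has a nontrivial critical point with critical value $c$, i.e.\ \eqref{5} has a nontrivial weak solution.

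There is no serious obstacle in this argument beyond assembling the lemmas already established: the one delicate point — estimating $\int_\Omega G(u)\, dx$ near the origin finely enough that the $p$-homogeneous part $I_p - \lambda J_p$ stays positive on a small sphere — has been isolated into Lemma \ref{Lemma 10}, which itself rests on the $p$-logarithmic Sobolev inequality \eqref{4} and Corollaries \ref{Corollary 2}--\ref{Corollary 3}. It is precisely at that step that $\lambda < \lambda_1$ is indispensable, via $\inf_{\M}\I = \lambda_1$; the behavior at infinity (from the logarithmic Ambrosetti--Rabinowitz condition \ref{g3}) and the Cerami compactness (Lemma \ref{Lemma 9}) are then routine.
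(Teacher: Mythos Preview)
Your proof is correct and follows essentially the same route as the paper's: mountain pass via Theorem \ref{Theorem 4}, with Lemma \ref{Lemma 10} (and $\inf_\M \I = \lambda_1$) providing the small sphere where $\Phi \ge 0$, Lemma \ref{Lemma 11} providing a point $u_1$ far out with $\Phi(u_1) \le 0$, and Lemma \ref{Lemma 9} supplying the Cerami condition. The only cosmetic differences are your explicit parametrization of $S_\rho$ through $\pi_\M$ and your generic choice of $\eps$ small rather than the paper's specific formula; neither changes the argument.
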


\begin{proof}
Take any $u_0 \in \M$. Applying Lemma \ref{Lemma 11} with $a > \I(u_0)$ gives $R > 0$ such that $u_1 = Ru_0$ satisfies $\Phi(u_1) \le 0$. Noting that $\I(u) \ge \lambda_1$ for all $u \in \M$ by \eqref{3} and applying Lemma \ref{Lemma 10} with
\[
\eps = \frac{\lambda_1 - \lambda}{\lambda_1 + 1} \quad \text{if} \ \lambda_1 > -1 \quad \text{and} \quad \eps > 0 \quad \text{if} \ \lambda_1 \leq -1,
\]
gives $\rho > 0$ such that $\rho < \norm{u_1}$ and $\inf \Phi(S_\rho) \ge 0$. So \eqref{34} holds. Since $\Phi$ satisfies the \C{c} condition for all $c \in \R$ by Lemma \ref{Lemma 9}, then $\Phi$ has a nontrivial critical point by Theorem \ref{Theorem 4}.
\end{proof}

When $\lambda > \lambda_1$, we use Theorem \ref{Theorem 3} to obtain a nontrivial solution of problem \eqref{5} provided that $\lambda$ is not an eigenvalue of $L_{\Delta_p}$ from the sequence $\seq{\lambda_k}$ given in Theorem \ref{Theorem 1}.

\begin{theorem}\label{thm:existence:lambdak}
If \ref{g1}--\ref{g3} hold with $\lambda_k < \lambda < \lambda_{k+1}$ and
\begin{equation} \label{35}
G(t) \ge \frac{\widetilde{\lambda}}{p}\, |t|^p \quad \forall t \in \R
\end{equation}
for some $\widetilde{\lambda} \in (\lambda_k,\lambda]$, then problem \eqref{5} has a nontrivial solution.
\end{theorem}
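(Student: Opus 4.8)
The plan is to apply the linking theorem, Theorem~\ref{Theorem 3}, with $W = X^p_0(\Omega)$, $\Phi$ the functional in \eqref{32}, and the manifold $\M = \M_p$. Since $\lambda_k < \lambda < \lambda_{k+1}$, Theorem~\ref{Theorem 1}~\ref{Theorem 1.iii} (applied at the level $\lambda$) furnishes sets realizing the index condition \eqref{29}: I would choose $B_0 = \I_\lambda = \set{u \in \M : \I(u) \ge \lambda}$ and $A_0 = \I^{\lambda_k} = \set{u \in \M : \I(u) \le \lambda_k}$ (or, if $A_0$ fails to be compact as stated, pass to a compact symmetric subset $A_0 \subset \I^{\mu}$ with $\mu \in (\lambda_k, \lambda)$ having index $\ge k$, which exists because $\lambda_k$ is attained in the minimax and the \PS{} condition holds). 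Then $i(A_0) = i(\M \setminus B_0) = k < \infty$ by Theorem~\ref{Theorem 1}~\ref{Theorem 1.iii}. I would pick $u_0 \in \M$ with $\I(u_0)$ as large as convenient (or simply any $u_0 \in \M \setminus A_0$, which is nonempty since $k < \dim W$).

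The core of the argument is verifying the separation inequality \eqref{36} for suitable $R, \rho > 0$. For the ``$B$'' side: by Lemma~\ref{Lemma 10}, for any $\eps > 0$ there is $\rho_0 > 0$ so that for $\rho \in (0,\rho_0)$ and $u \in B_0$,
\[
\Phi\bigg(\frac{\rho u}{\norm{u}}\bigg) \ge \bigg(\frac{\rho}{\norm{u}}\bigg)^{\!p} \big[(1 - \eps)\, \I(u) - (\lambda + \eps)\big] \ge \bigg(\frac{\rho}{\norm{u}}\bigg)^{\!p} \big[(1 - \eps)\, \lambda - (\lambda + \eps)\big],
\]
using $\I(u) \ge \lambda$ on $B_0$. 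If $\lambda > 0$ this lower bound can be made $\ge 0$ by taking $\eps$ small; if $\lambda \le 0$ one argues exactly as in the proof of Theorem~\ref{thm:existence:lambda1}, splitting according to whether $\lambda > -1$. Hence $\inf_{u \in B} \Phi(u) \ge 0$ for $\rho$ small, where $B = \set{\rho\, \pi_S(u) : u \in B_0}$. For the ``$A$'' side: $A$ is the union of the cone $\set{Rtu : u \in A_0,\, t \in [0,1]}$ over $A_0$ and the ``cap'' $\set{R\,\pi_\M((1-t)u + tu_0) : u \in A_0,\, t \in [0,1]}$. On the cone, by Lemma~\ref{Lemma 11} applied with $a = \widetilde{\lambda}$ and using hypothesis \eqref{35}, one gets $\Phi(tu) \le t^p(\I(u) - \widetilde{\lambda}) + C_{\widetilde{\lambda}} \le C_{\widetilde{\lambda}}$ since $\I(u) \le \lambda_k < \widetilde{\lambda}$ on $A_0$; more precisely, since $\widetilde\lambda > \lambda_k \ge \I(u)$, for $R$ large the term $R^p t^p(\I(u)-\widetilde\lambda)$ dominates and $\Phi \le 0$ on the cone once $R$ is large enough. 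On the cap, $\I$ is bounded above on the compact set $\set{\pi_\M((1-t)u+tu_0)}$ by some $a^* \in \R$; applying Lemma~\ref{Lemma 11} with $a > a^*$ gives $\Phi(Rv) \le R^p(\I(v) - a) + C_a \le 0$ for $R$ large, uniformly over the cap. Choosing $R$ large enough for both pieces and then $\rho \in (0, \rho_0)$ even smaller so that $\rho < R\,\dist(0,\M)$, we obtain $\sup_A \Phi \le 0 \le \inf_B \Phi$, which is \eqref{36}.

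It remains to check that $c := \inf_{\gamma \in \Gamma} \sup_{u \in \gamma(X)} \Phi(u)$ is finite: the upper bound $c \le \sup_{\gamma(X)} \Phi$ for $\gamma = \id$ combined with Lemma~\ref{Lemma 11} (the cone over $A$ has $\I$-bounded generators, so $\Phi$ is bounded above on $X$) gives $c < \infty$, while $c \ge \inf_B \Phi \ge 0 > -\infty$ by the linking property once we know $A$ links $B$ — which is exactly the content of Theorem~\ref{Theorem 3}. Finally $\Phi$ satisfies the \C{c} condition for every $c$ by Lemma~\ref{Lemma 9}. Theorem~\ref{Theorem 3} then produces a nontrivial critical point of $\Phi$ with critical value $c$, which by the weak-solution characterization is a nontrivial solution of \eqref{5}.

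\textbf{Main obstacle.} The delicate point is making the cap estimate on $A$ quantitative and uniform: one must choose a single $R$ that simultaneously pushes $\Phi$ below $\inf_B \Phi$ on \emph{both} the cone (where hypothesis \eqref{35} with the specific $\widetilde\lambda \in (\lambda_k,\lambda]$ is essential — a weaker lower bound on $G$ would not suffice because $\I$ on $A_0$ only satisfies $\I \le \lambda_k$, not $\I \le \widetilde\lambda - \text{gap}$) and the cap (where we only have a crude compactness bound $\I \le a^*$). The interplay forces the order of quantifiers: fix $\eps$ from the $B$-side, then fix $R$ from the $A$-side, then shrink $\rho$; one must also double-check that the radial projection $\pi_\M$ is well-defined and continuous on the relevant segments, i.e.\ that $(1-t)u + tu_0 \ne 0$, which holds since $-u_0 \notin A_0$ (ensured by symmetry of $A_0$ and $u_0 \in \M \setminus A_0$, after replacing $A_0$ by $A_0 \cap (-A_0)$ if necessary — though $A_0$ is already symmetric).
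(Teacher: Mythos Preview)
Your overall strategy via Theorem~\ref{Theorem 3} is right, but two concrete steps fail as written.

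\textbf{The $B$-side lower bound.} With your choice $B_0 = \I_\lambda$ you compute, for $u \in B_0$,
\[
(1-\eps)\,\I(u) - (\lambda+\eps) \ge (1-\eps)\lambda - (\lambda+\eps) = -\eps(\lambda+1),
\]
which is \emph{strictly negative} for every $\eps > 0$ whenever $\lambda > -1$; it cannot be made $\ge 0$ by taking $\eps$ small, contrary to what you claim. Hence $\inf_B \Phi \ge 0$ is not obtainable from $\I \ge \lambda$. But since $0 \in A$ (take $t=0$ in the cone) you need $\inf_B \Phi \ge \Phi(0) = 0$; otherwise \eqref{36} is impossible. The paper fixes this by taking $B_0 = \I_{\lambda_{k+1}}$, so that the relevant quantity becomes $(1-\eps)\lambda_{k+1} - (\lambda+\eps)$, which is positive for $\eps$ small because $\lambda_{k+1} > \lambda$; the index condition \eqref{29} still holds since $i(\M \setminus \I_{\lambda_{k+1}}) = k$ by Theorem~\ref{Theorem 1}~\ref{Theorem 1.iii}.

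\textbf{The cone estimate.} The cone $\set{Rtu : u \in A_0,\, t \in [0,1]}$ equals $\set{su : u \in A_0,\, s \in [0,R]}$, so it always contains points with $s$ arbitrarily small regardless of how large $R$ is. Your bound via Lemma~\ref{Lemma 11}, $\Phi(su) \le s^p(\I(u)-\widetilde\lambda) + C_{\widetilde\lambda}$, therefore gives only $\Phi \le C_{\widetilde\lambda}$ near $s=0$, and enlarging $R$ does not help. The correct move is to use hypothesis \eqref{35} directly (not through Lemma~\ref{Lemma 11}): since $G(t) \ge \tfrac{\widetilde\lambda}{p}|t|^p$ and $\pnorm{u}^p = p$ on $\M$, one gets $\Phi(su) \le s^p(\I(u) - \widetilde\lambda)$ with \emph{no} additive constant, and this is $\le 0$ for all $s \ge 0$ once $A_0$ is chosen inside $\{\I < \widetilde\lambda\}$. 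This is exactly how the paper proceeds, taking $A_0$ to be a compact symmetric subset of $\M \setminus \I_{\widetilde\lambda}$ with index $k$.

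With these two corrections your argument coincides with the paper's; the cap estimate and the Cerami verification are handled as you describe.
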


\begin{proof}
Since $\lambda_k < \widetilde{\lambda} < \lambda_{k+1}$,
\[
i(\M \setminus \I_{\widetilde{\lambda}}) = i(\M \setminus \I_{\lambda_{k+1}}) = k
\]
by Theorem \ref{Theorem 1} \ref{Theorem 1.iii}. Since $\M \setminus \I_{\widetilde{\lambda}}$ is an open symmetric subset of $\M$, it has a compact symmetric subset $A_0$ with the same index (see Perera et al.\! \cite[Proposition 2.14 $(iii)$]{MR2640827}). Let $B_0 = \I_{\lambda_{k+1}}$ and note that \eqref{29} holds.

Take any $u_0 \in \M \setminus A_0$ and let $A$ and $B$ be as in Theorem \ref{Theorem 3}. By \eqref{35},
\[
\Phi(tu) \le t^p\, (\I(u) - \widetilde{\lambda}) \le 0 \quad \forall u \in A_0,\, t \ge 0.
\]
Since $A_0$ is compact and $\pi_\M$ is continuous, the set $A_1 = \set{\pi_\M((1 - t)u + tu_0) : u \in A_0,\, t \in [0,1]}$ is compact and hence $\I$ is bounded on $A_1$. Applying Lemma \ref{Lemma 11} with $a > \sup \I(A_1)$ gives $R > 0$ such that $A_R = \set{R\, \pi_\M((1 - t)u + tu_0) : u \in A_0,\, t \in [0,1]}$ satisfies $\sup \Phi(A_R) \le 0$. Since $\I(u) \ge \lambda_{k+1}$ for all $u \in B_0$, applying Lemma \ref{Lemma 10} with
\[
\eps = \frac{\lambda_{k+1} - \lambda}{\lambda_{k+1} + 1} \quad \text{if} \ \lambda_{k+1} > -1 \quad \text{and} \quad \eps > 0 \quad \text{if} \ \lambda_{k+1} \leq -1,
\]
gives $\rho > 0$ such that $\rho < R\, \dist(0,\M)$ and $\inf \Phi(B) \ge 0$. So \eqref{36} holds. Since $\Phi$ satisfies the \C{c} condition for all $c \in \R$ by Lemma \ref{Lemma 9}, then $\Phi$ has a nontrivial critical point by Theorem \ref{Theorem 3}.
\end{proof}
\def\cprime{$''$}

\end{document}